\newtheorem{theorem}{Theorem}
\newtheorem{remark}{Remark}
\newtheorem{lemma}{Lemma}
\newtheorem{proof}{Proof}
\newtheorem{corollary}{Corollary}
\newtheorem{assumption}{Assumption}
\newcommand{\x}{\mathbf{x}}
\newcommand{\y}{\mathbf{y}}
\newcommand{\z}{\mathbf{z}}
\renewcommand{\v}{\mathbf{v}}
\newcommand{\m}{\mathbf{m}}
\newcommand{\bdelta}{\mathbf{\delta}}
\newcommand{\blambda}{\mathbf{\lambda}}
\newcommand{\bell}{\mathbf{\ell}}
\newcommand{\I}{\mathbf{I}}
\newcommand{\X}{\mathbf{X}}
\newcommand{\Y}{\mathbf{Y}}
\newcommand{\U}{\mathbf{U}}
\newcommand{\V}{\mathbf{V}}
\newcommand{\A}{\mathbf{A}}
\newcommand{\bLambda}{\mathbf{\Lambda}}
\renewcommand{\H}{\mathbf{H}}
\renewcommand{\S}{\mathcal{S}}
\newcommand{\bO}{{\cal O}}
\newcommand{\R}{\mathbb{R}}
\newcommand{\K}{\mathcal{K}}
\newcommand{\B}{\mathbb{B}}
\newcommand{\<}{\left\langle}
\renewcommand{\>}{\right\rangle}
\DeclareMathOperator*{\argmin}{argmin}
\icmltitlerunning{Restarted Nonconvex Accelerated Gradient Descent}
\begin{document}

\onecolumn
\icmltitle{Restarted Nonconvex Accelerated Gradient Descent: \\
No More Polylogarithmic Factor in the $\bO(\epsilon^{-7/4})$ Complexity}

\begin{icmlauthorlist}
\icmlauthor{Huan Li}{to}
\icmlauthor{Zhouchen Lin}{goo}
\end{icmlauthorlist}

\icmlaffiliation{to}{Institute of Robotics and Automatic Information Systems, College of Artificial Intelligence, Nankai University, Tianjin, China (lihuanss@nankai.edu.cn).\\}
\icmlaffiliation{goo}{National Key Lab of General AI, School of Intelligence Science and Technology, Peking University, Beijing, China
    (zlin@pku.edu.cn).}





\vskip 0.3in



\printAffiliationsAndNotice{}  

\begin{abstract}
This paper studies accelerated gradient methods for nonconvex optimization with Lipschitz continuous gradient and Hessian. We propose two simple accelerated gradient methods, restarted accelerated gradient descent (AGD) and restarted heavy ball (HB) method, and establish that our methods achieve an $\epsilon$-approximate first-order stationary point within $\bO(\epsilon^{-7/4})$ number of gradient evaluations by elementary proofs. Theoretically, our complexity does not hide any polylogarithmic factors, and thus it improves over the best known one by the $\bO(\log\frac{1}{\epsilon})$ factor. Our algorithms are simple in the sense that they only consist of Nesterov's classical AGD or Polyak's HB iterations, as well as a restart mechanism. They do not invoke negative curvature exploitation or minimization of regularized surrogate functions as the subroutines. In contrast with existing analysis, our elementary proofs use less advanced techniques and do not invoke the analysis of strongly convex AGD or HB.
\end{abstract}

\section{Introduction}
Nonconvex optimization has become the foundation of training machine learning models and emerging machine learning tasks can be modeled as nonconvex problems. Typical examples include matrix completion \citep{Hardt-2014-focs}, one bit matrix completion \citep{Davenport-2014}, robust PCA \citep{Prateek-2014-pca}, phase retrieval \citep{Candes-2015}, and deep learning \citep{Hinton-nature}. In this paper, we consider the following general nonconvex problem:
\begin{eqnarray}
\begin{aligned}\label{problem}
\min_{\x\in\R^d} f(\x),
\end{aligned}
\end{eqnarray}
where $f(\x)$ has Lipschitz continuous gradient and Hessian and it is bounded from below. Our goal is to find an $\epsilon$-approximate first-order stationary point, defined as
\begin{equation}
\|\nabla f(\x)\|\leq\epsilon.\notag
\end{equation}

Gradient descent, a fundamental algorithm in machine learning, is commonly used due to its simplicity and practical efficiency. Theoretically, gradient descent is the optimal method among the first-order algorithms for nonconvex optimization under the assumption that the gradient is Lipschitz \citep{Carmon-mp-2020}, which means that we cannot find a first-order method with theoretically faster convergence rate under these conditions. When we assume additional structure, such as the Hessian Lipschitz geometry, improvement is possible. On the other hand, for convex optimization, gradient descent is known to be suboptimal and several accelerated gradient methods with theoretically faster convergence rate were proposed. Typical examples include Polyak's heavy ball (HB) method \citep{Polyak-1964} and Nesterov's accelerated gradient descent (AGD) \citep{Nesterov1983,Nesterov1988,Nesterov-smooth}. Motivated by the theoretical optimality and practical efficiency of convex AGD and HB, AGD and HB have been extended to nonconvex optimization \citep{Carmon-2016,carmon-2017-guilty,Zhu-16-apg,jinchi-18-apg}. But there are still some issues, such as the suboptimal convergence rate and complex algorithms and proofs. In this paper, we study the restarted AGD and HB method, variants of the original AGD and HB by employing a restart mechanism. Our aim is to establish a slightly faster convergence rate than the state-of-the-art accelerated methods by elementary analysis for the two simple methods.

\subsection{Literature Review}

In this section, we briefly review the convergence rates of gradient descent, accelerated gradient descent, and the heavy ball method for convex optimization, as well as the state-of-the-art accelerated methods for nonconvex optimization.

\subsubsection{Accelerated Gradient Methods for Convex Optimization}

For convex problems, gradient descent is known to converge to an $\epsilon$-optimal solution within $\bO(\frac{L}{\epsilon})$ and $\bO(\frac{L}{\mu}\log\frac{1}{\epsilon})$ iterations for $L$-smooth convex problems and $\mu$-strongly convex problems, respectively \citep{Nesterov-2004}. Polyak's heavy ball method \citep{Polyak-1964} was the first accelerated first-order method, which finds an $\epsilon$-optimal solution in $\bO(\sqrt{\frac{L}{\mu}}\log\frac{1}{\epsilon})$ steps when the objective function is twice continuously differentiable, $L$-smooth, $\mu$-strongly convex, and the initializer is close enough to the minimum. Recently, \citet{wang-icml22} extended HB to the case without the locality condition. However, the $\bO(\sqrt{\frac{L}{\mu}}\log\frac{1}{\epsilon})$ complexity only holds after $\bO(\frac{L}{\mu})$ iterations. When strong convexity is absent, currently, only the $\bO(\frac{L}{\epsilon})$ complexity is proved for smooth convex problems \citep{Ghadimi-2015-ecc}, which is the same as gradient descent. In a series of celebrated works \citep{Nesterov1983,Nesterov1988,Nesterov-smooth}, Nesterov proposed several accelerated gradient descent methods. The same $\bO(\sqrt{\frac{L}{\mu}}\log\frac{1}{\epsilon})$ complexity is established for strongly convex problems without the twice continuous differentiability and locality assumptions. Moreover, when the objective is $L$-smooth and convex, Nesterov's accelerated methods find an $\epsilon$-optimal solution in $\bO(\sqrt{\frac{L}{\epsilon}})$ iterations, which is faster than gradient descent and the heavy ball method in theory. Nesterov's accelerated methods are proven to be optimal among the first-order methods for convex optimization \citep{Nesterov-2004}. For more topics on accelerated methods for convex optimization, interested readers can refer to the survey paper \citep{li-pieee}, for example.

\subsubsection{Accelerated Gradient Methods to Achieve Nonconvex First-order Stationary Point}

For nonconvex problems, gradient descent finds an $\epsilon$-approximate first-order stationary point of problem (\ref{problem}) in $\bO(\epsilon^{-2})$ iterations \citep{Nesterov-2004}. Enormous amount of effort has been spent on speeding up gradient descent in the last decade. \citet{hb-1993,iPiano-2014,iPiano-2018,JingweiLiang-16-nips} studied the convergence of the HB method, while \citet{Saecd2013,li-15-apg,li-2017-icml} studied AGD. The practical efficiency is verified empirically and there is no theoretical speedup under the assumption of Lipschitz gradient. With the additional Lipschitz Hessian assumption, \citet{carmon-2017-guilty} proposed a ``convex until proven guilty" mechanism with nested-loop, which converges to an $\epsilon$-approximate first-order stationary point within $\bO(\epsilon^{-7/4}\log\frac{1}{\epsilon})$ gradient and function evaluations. Their method alternates between negative curvature exploitation and inexact minimization of a regularized surrogate function, where in the latter subroutine, \citet{carmon-2017-guilty} add a proximal term to reduce the nonconvex subproblem to a convex one and use the convex AGD to minimize it until the function is ``guilty" of being nonconvex. When the third-order derivative of the objective is Lipschitz, the $\bO(\epsilon^{-5/3}\log\frac{1}{\epsilon})$ complexity can be obtained \citep{carmon-2017-guilty}.

\subsubsection{Accelerated Gradient Methods to Achieve Nonconvex Second-order Stationary Point}

When studying nonconvex accelerated methods, most works concentrate on the second-order stationary point (see definition in (\ref{def_2point})). \citet{Carmon-2016} combined the Lanczos method and regularized accelerated gradient descent, where the former is used to compute the eigenvector corresponding to the smallest negative eigenvalue to search descent directions of negative curvature. \citet{Zhu-16-apg} implemented the cubic regularized Newton method \citep{nesterov-cubic} carefully and computed the descent direction using accelerated method for fast approximate matrix inversion, while \citet{Carmon-siam-2020,Carmon-2018-nips} employed the Krylov subspace method to solve the cubic regularized Newton subproblems. The above methods find an $\epsilon$-approximate second-order stationary point with probability at least $1-\delta$ in $\bO(\epsilon^{-7/4}\log\frac{d}{\epsilon\delta})$ gradient and Hessian-vector product evaluations\footnote{\citet{Carmon-2016} use $\nabla^2 f(\x)\v=\lim_{h\rightarrow 0}\frac{\nabla f(\x+h\v)-\nabla f(\x)}{h}$ to approximate the Hessian-vector product.}, where $d$ is the dimension of $\x$ in problem (\ref{problem}). To avoid the Hessian-vector products, \citet{yang-2018-nips} and \citet{Allen-Zhu-2018-nc} proposed the NEON and NEON2 first-order procedures to extract directions of negative curvature from the Hessian, respectively, which can be used to turn a first-order stationary point finding algorithm into a second-order stationary point finding one. 
Other typical algorithms include the Newton-conjugate gradient \citep{Royer20mp} and the second-order line-search method \citep{Clement-siam-2018}, which are beyond the class of accelerated methods.

The above methods are nested-loop algorithms, where the outer loop needs to call a serious of subroutines such as negative curvature exploitation, minimization of regularized surrogate functions using convex AGD \citep{Carmon-2016,carmon-2017-guilty}, or computation of cubic regularized Newton directions \citep{Zhu-16-apg,Carmon-siam-2020,Carmon-2018-nips}. \citet{jinchi-18-apg} proposed the first single-loop accelerated method, which also finds an $\epsilon$-approximate second-order stationary point in $\bO(\epsilon^{-7/4}\log\frac{d}{\epsilon\delta})$ gradient and function computations with probability at least $1-\delta$. The algorithm in \citep{jinchi-18-apg} runs the classical AGD until the function becomes ``too nonconvex" locally, then it calls negative curvature exploitation. To the best of our knowledge, it is the simplest method among the nonconvex accelerated algorithms with fast rate guarantees.

Although achieving second-order stationary point guarantees the method to escape strict saddle points, some researchers show that gradient descent and its accelerated variants that converge to first-order stationary point always converge to local minimum. \citet{Jason-16-gd} proved that gradient descent converges to a local minimizer almost surely with random initialization. \citet{hb-sun-2019} gave the similar result for the heavy ball method. \citet{Neill2019mp} examined the behavior of HB and AGD near strict saddle points and proved that both methods diverge from these points more rapidly than gradient descent for specific quadratic functions.

\subsubsection{Lower Bound for Second-order Smooth Nonconvex Problems}

\citet{Carmon-mp2-2021} studied the lower bounds for finding stationary point using first-order methods. For nonconvex functions with Lipschitz continuous gradient and Hessian, they established that deterministic first-order methods cannot find $\epsilon$-approximate first-order stationary points in less than $\bO(\epsilon^{-12/7})$ gradient evaluations. There exists a gap of $\bO(\epsilon^{-1/28}\log\frac{1}{\epsilon})$ between this lower bound and the best known upper bound \citep{carmon-2017-guilty}. It remains an open problem of how to close this gap. It is also unclear which of the upper bound and lower bound is tight \citep[Section 7]{Carmon-mp2-2021}.

\subsection{Contribution}

All the above accelerated algorithms \citep{carmon-2017-guilty,Carmon-2016,Zhu-16-apg,Carmon-siam-2020,jinchi-18-apg} share the state-of-the-art $\bO(\epsilon^{-7/4}\log\frac{1}{\epsilon})$ complexity, which has a $\bO(\log\frac{1}{\epsilon})$ factor. As far as we know, even when we apply the methods designed to find second-order stationary point to the easier problem of finding first-order stationary one, we still cannot remove the $\bO(\log\frac{1}{\epsilon})$ factor. On the other hand, almost all the existing accelerated methods need to call additional subroutines and thus they are complex with nested loops. Even the single-loop method proposed in \citep{jinchi-18-apg} requires negative curvature exploitation.

In this paper, we propose two simple accelerated methods, restarted AGD and restarted HB, which have the following three advantages:

\begin{enumerate}
\item Our algorithms find an $\epsilon$-approximate first-order stationary point within $\bO(\epsilon^{-7/4})$ number of gradient evaluations under the conditions that both the gradient and Hessian are Lipschitz continuous. We do not hide any polylogarithmic factors in our complexity, and thus it improves over the best known one by the $\bO(\log\frac{1}{\epsilon})$ factor.
\item Our algorithms are simple in the sense that they only consist of Nesterov's classical AGD or Polyak's HB iterations, as well as a restart mechanism. They do not invoke negative curvature exploitation or minimization of regularized surrogate functions or computation of cubic regularized Newton directions as the subroutines. 
\item Technically, our elementary proofs use less advanced techniques compared with existing works. Especially, it is irrelevant to the analysis of strongly convex AGD or HB, which is crucial to cancel the $\bO(\log\frac{1}{\epsilon})$ factor. 
\end{enumerate}


\subsection{Notations and Assumptions}

We use lowercase bold letters to represent vectors, uppercase bold letters for matrices, and non-bold (both lowercase and uppercase) letters for scalars. Denote $\x_j$ and $\nabla_j f(\x)$ as the $j$th element of $\x$ and $\nabla f(\x)$, respectively. For the vectors produced in the iterative algorithms, for example, $\x$, denote $\x^k$ to be the value at the $k$th iteration. 
We denote $\|\cdot\|$ to be the $\ell_2$ Euclidean norm for vectors, $\|\cdot\|_2$ as the spectral norm and $\|\cdot\|_F$ as the Frobenius norm for matrices. We make the following standard assumptions in this paper.
\begin{assumption}\label{assump}
\begin{enumerate}
\item $f(\x)$ is $L$-gradient Lipschitz: $\|\nabla f(\x)-\nabla f(\y)\|\leq L\|\x-\y\|,\forall \x,\y\in\R^d$,
\item $f(\x)$ is $\rho$-Hessian Lipschitz: $\|\nabla^2 f(\x)-\nabla^2 f(\y)\|_2\leq \rho\|\x-\y\|,\forall \x,\y\in\R^d$,
\end{enumerate}
\end{assumption}
which yield the following two well-known inequalities:
\begin{eqnarray}
&\left|f(\y)- f(\x)-\<\nabla f(\x),\y-\x\>\right|\leq\frac{L}{2}\|\y-\x\|^2,\label{gd_smooth1}\\
&\left|f(\y)- f(\x)-\<\nabla f(\x),\y-\x\>-(\y-\x)^T\nabla^2 f(\x)(\y-\x)\right|\leq\frac{\rho}{6}\|\y-\x\|^3.\label{h_smooth1}
\end{eqnarray}
We also assume that the objective function is lower bounded, that is, $\min_{\x}f(\x)>-\infty$.

\section{Restarted Accelerated Gradient Descent}\label{sec2}

Nesterov's classical AGD consists of the following iterations:
\begin{eqnarray}
\begin{aligned}\notag
\y^{k}=\x^{k}+(1-\theta)(\x^{k}-\x^{k-1}),\quad\x^{k+1}=\y^{k}-\eta\nabla f(\y^{k}),
\end{aligned}
\end{eqnarray}
where $\theta=\frac{2\sqrt{\mu}}{\sqrt{L}+\sqrt{\mu}}$ for strongly convex problems and it varies as $\frac{3}{k+2}$ at the $k$th iteration for convex problems. The term $(\x^{k}-\x^{k-1})$ is often regarded as momentum. When applying the above iteration to nonconvex problems, the major challenge in faster convergence analysis is that the objective function (even the Hamiltonian potential function used in \citep{jinchi-18-apg}) does not decrease monotonically, especially when we set $\eta=\bO(\frac{1}{L})$ and $\theta$ small (for example, of the order $\epsilon$). To address this issue, \citet{jinchi-18-apg} invoke negative curvature exploitation when the local objective function is very nonconvex. An open problem is asked in Section 5 of \citep{jinchi-18-apg} whether negative curvature exploitation is indispensable to guarantee the fast rate. In contrast with \citep{jinchi-18-apg}, we use the restart mechanism to ensure the decrease of the objective function, and thus avoid negative curvature exploitation.

\begin{algorithm}[t]
   \caption{Restarted AGD for Nonconvex Optimization (RAGD-NC)}
   \label{AGD1}
\begin{algorithmic}[1]
   \STATE Initialize $\x^{-1}=\x^{0}=\x_{int}$, $k=0$.
   \WHILE{$k<K$}
   \STATE $\y^{k}=\x^{k}+(1-\theta)(\x^{k}-\x^{k-1})$
   \STATE $\x^{k+1}=\y^{k}-\eta\nabla f(\y^{k})$
   \STATE $k=k+1$
   \IF{$k\sum_{t=0}^{k-1}\|\x^{t+1}-\x^{t}\|^2> B^2$}
   \STATE $\x^{-1}=\x^{0}=\x^{k}$, $k=0$
   \ENDIF
   \ENDWHILE
   \STATE $K_0=\argmin_{\lfloor \frac{K}{2}\rfloor\leq k\leq K-1}\|\x^{k+1}-\x^{k}\|$
   \STATE Output $\hat\y=\frac{1}{K_0+1}\sum_{k=0}^{K_0}\y^{k}$
\end{algorithmic}
\end{algorithm}

We present our method in Algorithm \ref{AGD1}. It runs Nesterov's classical AGD iterations until the ``if condition" triggers. Then we reset $\x^0$ and $\x^{-1}$ equal to $\x^k$ and continue to the next round of AGD. The method terminates and outputs a specific average when the ``if condition" does not trigger in $K$ iterations. To simplify the description, we define one round of AGD between two successive restarts to be one ``epoch". The restart trick, first proposed in \citep{Donoghue-2015-NesRestart}, is motivated by \citep{congfang-19-sgd}, where a ball-mechanism is proposed as the stopping criteria to analyze SGD. 

Our main result is described in Theorem \ref{theorem1}, which establishes the $\bO(\epsilon^{-7/4})$ complexity to achieve an $\epsilon$-approximate first-order stationary point. We defer the proofs until Section \ref{sec:proof_t1}.
\begin{theorem}\label{theorem1}
Suppose that Assumption \ref{assump} holds. Let $\eta=\frac{1}{4L}$, $B=\sqrt{\frac{\epsilon}{\rho}}$, $\theta=4(\epsilon\rho\eta^2)^{1/4}\in(0,1]$, and $K=\frac{1}{\theta}$. Then Algorithm \ref{AGD1} terminates in at most $\frac{\triangle_fL^{1/2}\rho^{1/4}}{\epsilon^{7/4}}$ gradient computations and the output satisfies $\|\nabla f(\hat\y)\|\leq82\epsilon$, where $\triangle_f=f(\x_{int})-\min_{\x}f(\x)$.
\end{theorem}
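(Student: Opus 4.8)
The plan is to analyze a single epoch of AGD and show that either (a) the "if condition" triggers, in which case we can certify a substantial decrease in $f$, or (b) the epoch completes all $K=1/\theta$ iterations without triggering, in which case the iterates stay in a small ball and the averaged gradient at the output is small. Since $f$ is lower bounded by $\min_\x f$ and each triggered epoch decreases $f$ by a fixed amount $\sim\epsilon^{2}/\rho$ (this is where $B^2=\epsilon/\rho$ enters), there can be only $O(\triangle_f \rho/\epsilon^2)$ triggered epochs; multiplying by the at most $K=1/\theta = O(\epsilon^{-1/4}L^{1/2}\rho^{-1/4})$ iterations per epoch gives the claimed $O(\triangle_f L^{1/2}\rho^{1/4}\epsilon^{-7/4})$ total gradient count. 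The final non-triggered epoch must then exist (otherwise we'd contradict lower-boundedness), and it produces the output.

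**The heart of the argument** is a one-epoch estimate. First I would write the standard AGD descent-type inequality: using $\eta$-smoothness on the step $\x^{k+1}=\y^k-\eta\nabla f(\y^k)$ and the momentum relation $\y^k-\x^k=(1-\theta)(\x^k-\x^{k-1})$, derive a telescoping bound of the form $f(\x^{k+1}) \le f(\x^k) - c_1\|\x^{k+1}-\y^k\|^2 + c_2\theta \|\x^k-\x^{k-1}\|^2 + (\text{cross terms})$, valid with $\eta=1/(4L)$. Summing over one epoch and using the trigger condition $k\sum_{t=0}^{k-1}\|\x^{t+1}-\x^t\|^2 > B^2$ at the restart time, one gets $f(\x^{\text{end}}) \le f(\x^{\text{start}}) - \Omega(\epsilon^{2}/\rho)$; here the key mechanism is that "large total movement" ($\sum\|\x^{t+1}-\x^t\|^2$ exceeding $B^2/k$) forces enough accumulated gradient magnitude, which smoothness converts into function decrease. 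If the epoch does \emph{not} trigger, then for every $k\le K$ we have $k\sum_{t=0}^{k-1}\|\x^{t+1}-\x^t\|^2 \le B^2$, hence $\sum_{t=0}^{K-1}\|\x^{t+1}-\x^t\|^2 \le B^2/K = \theta B^2$, so the iterates lie in a ball of radius $O(B)$ around $\x^0$ and consecutive differences are small on average. Choosing $K_0$ in the second half to minimize $\|\x^{k+1}-\x^k\|$ gives $\|\x^{K_0+1}-\x^{K_0}\|^2 \le \frac{2}{K}\sum_{k=\lfloor K/2\rfloor}^{K-1}\|\x^{k+1}-\x^k\|^2 \le 2\theta B^2/K$, i.e. $\|\nabla f(\y^{K_0})\| = \|\x^{K_0+1}-\y^{K_0}\|/\eta$ is $O(\epsilon)$ — but that's only one gradient, not the average.

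**To control the gradient of the average** $\hat\y = \frac{1}{K_0+1}\sum_{k=0}^{K_0}\y^k$, I would use the Hessian-Lipschitz inequality (\ref{h_smooth1}): $\|\nabla f(\hat\y) - \frac{1}{K_0+1}\sum_k \nabla f(\y^k)\|$ is controlled by $\frac{\rho}{2}\cdot\frac{1}{K_0+1}\sum_k\|\y^k-\hat\y\|^2$, which is $O(\rho B^2) = O(\epsilon)$ by the small-ball bound (the diameter squared is $O(B^2)$, and $\rho B^2 = \epsilon$ exactly — this is precisely why $B=\sqrt{\epsilon/\rho}$ is the right choice). Separately, $\|\frac{1}{K_0+1}\sum_k \nabla f(\y^k)\| = \frac{1}{\eta(K_0+1)}\|\sum_k(\y^k - \x^{k+1})\|$, and since $\y^k - \x^{k+1} = \x^k + (1-\theta)(\x^k-\x^{k-1}) - \x^{k+1}$, the sum $\sum_{k=0}^{K_0}(\y^k-\x^{k+1})$ telescopes up to the $(1-\theta)$-weighting, leaving a few boundary terms of size $O(\|\x^{j+1}-\x^j\|)$ plus a $\theta\sum_k\|\x^k-\x^{k-1}\|$ term; dividing by $K_0+1 \gtrsim K/2$ and invoking $\sum\|\x^{t+1}-\x^t\| \le \sqrt{K}\cdot\sqrt{\theta B^2} = B$ (Cauchy–Schwarz on the non-trigger bound) makes everything $O(\epsilon)$, the explicit constant working out to the stated $82$.

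**The main obstacle** I anticipate is the one-epoch descent inequality in case (a): standard AGD potential-function analyses rely on convexity (or strong convexity) to get monotone decrease, which is unavailable here, so the cross terms coupling $\|\x^{k+1}-\y^k\|$ with the momentum $\|\x^k-\x^{k-1}\|$ must be absorbed carefully — using only $L$-smoothness and the smallness of $\theta$ (note $\theta = 4(\epsilon\rho\eta^2)^{1/4}$ is $O(\epsilon^{1/4})$, tiny) — to produce a genuine $-\Omega(\epsilon^2/\rho)$ drop from the trigger event, without ever invoking the Hessian-Lipschitz property inside the epoch. Getting the constants to close (so that the decrease per triggered epoch times the epoch count matches $\triangle_f$, and the output bound is a clean numerical multiple of $\epsilon$) is the delicate bookkeeping, but conceptually it all reduces to: trigger $\Rightarrow$ progress; no trigger $\Rightarrow$ confinement $\Rightarrow$ small averaged gradient via Hessian-Lipschitz smoothing.
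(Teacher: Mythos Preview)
Your plan for the final (non-triggered) epoch is essentially the paper's Lemma~6, and that part is fine: the telescoping of $\sum_k(\y^k-\x^{k+1})$ together with the Hessian-Lipschitz comparison of $\nabla f(\hat\y)$ with the averaged gradient is exactly how the $82\epsilon$ bound is obtained.

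The gap is in case~(a). First, an arithmetic slip: the per-epoch decrease the paper needs (and proves) is $\Omega(\epsilon^{3/2}/\sqrt{\rho})$, not $\Omega(\epsilon^2/\rho)$. With your $\epsilon^2/\rho$ drop and $K\sim L^{1/2}(\epsilon\rho)^{-1/4}$ iterations per epoch, the total gradient count would be $O(\triangle_f L^{1/2}\rho^{3/4}\epsilon^{-9/4})$, not $\epsilon^{-7/4}$.

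More fundamentally, you cannot extract \emph{any} guaranteed decrease from the trigger using only $L$-smoothness. The only smoothness-based per-step inequality available is the one in the paper's Lemma~1,
\[
f(\x^{k+1})-f(\x^k)\;\le\;\tfrac{5}{8\eta}\|\x^k-\x^{k-1}\|^2-\tfrac{1}{2\eta}\|\x^{k+1}-\x^k\|^2-\tfrac{3\eta}{8}\|\nabla f(\y^k)\|^2,
\]
and after telescoping (using $\x^0=\x^{-1}$) this yields
\[
f(\x^\K)-f(\x^0)\;\le\;\tfrac{1}{8\eta}\sum_{k=0}^{\K-2}\|\x^{k+1}-\x^k\|^2-\tfrac{3\eta}{8}\sum_{k=0}^{\K-1}\|\nabla f(\y^k)\|^2.
\]
The trigger gives a \emph{lower} bound on $\sum_{k=0}^{\K-1}\|\x^{k+1}-\x^k\|^2$, not on $\sum\|\nabla f(\y^k)\|^2$; with momentum, large displacements $\x^{k+1}-\x^k$ do not force large gradients. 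The first term on the right is positive and can be as large as $B^2/(8\eta)$, so without separate control on the gradient sum you only get an \emph{increase} bound. The paper therefore splits case~(a) further: if $\|\nabla f(\y^{\K-1})\|>B/\eta$ the smoothness inequality above suffices (Lemma~1); if $\|\nabla f(\y^{\K-1})\|\le B/\eta$ the Hessian-Lipschitz assumption is used \emph{inside the epoch}. Specifically, one Taylor-expands $f$ to second order at $\x^0$, diagonalizes $\H=\nabla^2 f(\x^0)$, and analyzes the AGD recursion coordinatewise on the quadratic model $g$, separating eigenvalues $\ge -\theta/\eta$ from those $< -\theta/\eta$ (Lemmas~2 and~3). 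The approximation error $\|\nabla f(\y^k)-\nabla g(\y^k)\|\le 2\rho B^2$ is exactly what converts the trigger bound $\sum\|\x^{k+1}-\x^k\|^2>B^2/\K$ into a $-\tfrac{3\theta B^2}{8\eta K}+O(\rho B^3)$ function drop, which with the parameter choices becomes $-\tfrac{7}{8}\epsilon^{3/2}/\sqrt{\rho}$. Your proposal to handle~(a) ``without ever invoking the Hessian-Lipschitz property inside the epoch'' is precisely where the argument would fail.
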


Among the existing methods, the ``convex until proven guilty" method proposed in \citep{carmon-2017-guilty} achieves an $\epsilon$-approximate first-order stationary point in $\bO (\frac{\triangle_fL^{1/2}\rho^{1/4}}{\epsilon^{7/4}}\log\frac{L\triangle_f}{\epsilon})$ gradient and function evaluations, which is slower than our method by the $\bO(\log\frac{1}{\epsilon})$ factor. The complexity established in other work focusing on second-order stationary point, such as \citep{Carmon-2016,Zhu-16-apg,Carmon-siam-2020,jinchi-18-apg}, also has the additional $\bO(\log\frac{1}{\epsilon})$ factor even when only pursuing first-order stationary point. Take \citep{jinchi-18-apg} as the example. Their Lemma 7 concentrates on the first-order stationary point. They built the proofs of their Lemmas 9 and 17 upon the analysis of strongly convex AGD, which generally requires $\bO(\sqrt{\frac{L}{\mu}}\log\frac{1}{\epsilon})$ iterations such that the gradient norm will be less than $\epsilon$. Thus, the $\bO(\log\frac{1}{\epsilon})$ factor appears.

\begin{remark}\label{remark1}
\begin{enumerate}
\item
The specific average on lines 10 and 11 of Algorithm \ref{AGD1} is the crucial technique to remove the $\bO(\log\frac{1}{\epsilon})$ factor. See the proof of Lemma \ref{lemma6}. This phenomenon that some averaged iterate converges faster than the final iterate theoretically has also been observed in other algorithms. For example, for Lipschitz and strongly convex functions, but not necessarily differentiable, \citet{Shamir-2013-icml} proved the $\bO(\frac{\log T}{T})$ error of the final iterate of SGD while the $\bO(\frac{1}{T})$ one for the suffix averaged iterate. Both rates are tight matching the corresponding lower bounds \citep{Harvey-19-sgd}. For linearly constrained convex problems, \citet{Davis-2017-admm} proved the $\bO(\frac{1}{\sqrt{T}})$ rate for the final iterate of ADMM while the $\bO(\frac{1}{T})$ one for the averaged iterate. The two rates are also tight \citep{Davis-2017-admm}.

We can extend this technical trick to the method proposed in \citep{jinchi-18-apg} and greatly simplify their proofs with the slightly faster $\bO(\epsilon^{-7/4})$ convergence rate. See the supplementary material of our conference version \citep{li-2022-icml}. On the other hand, we can also prove that the gradient at the last iterate in our method is small with norm being less than $\epsilon$ by employing the proof techniques in \citep{jinchi-18-apg}, at the expense of introducing the additional $\bO(\log\frac{1}{\epsilon})$ factor and  complicating the proofs.
\item
Restart plays the role of decreasing the objective function at each epoch of AGD. See Corollary \ref{lemma5}. Intuitively, when the iterates are far from the local starting point $\x^0$ or the momentum $\x^{k}-\x^{k-1}$ is large such that it may potentially increase the objective function, restart cancels the effect of momentum by setting it to 0. 
\item
As discussed in Section \ref{sec-comp}, since our proofs do not invoke the analysis of strongly convex AGD or HB, the acceleration mechanism for nonconvex optimization seems irrelevant to the analysis of convex AGD. Our proofs show that momentum and its parameter $\theta$ play an important role in the analysis of nonconvex acceleration mechanism.
\end{enumerate}
\end{remark}

\subsection{Adaptive Implementation and Infrequent Restart}

In Algorithm \ref{AGD1}, we set $B$ small in theory such that the method may restart frequently, making it almost reduce to the classical gradient descent, especially for high dimensional problems. To take advantage of the practical efficiency of AGD, we should reduce the frequency of restart. A straightforward idea is to set a large $B$ initially and reduce it gradually. We present an adaptive implementation of Algorithm \ref{AGD1} in Algorithm \ref{AGD1p}, which relaxes the restart condition of $k\sum_{t=0}^{k-1}\|\x^{t+1}-\x^{t}\|^2> B^2$ to $k\sum_{t=0}^{k-1}\|\x^{t+1}-\x^{t}\|^2> \max\{B^2,B_0^2\}$, where $B_0$ can be initialized much larger than $B$ and is decreased geometrically after each epoch. The decrease condition on line 8 of Algorithm \ref{AGD1p} comes form Corollary \ref{lemma5}. Intuitively, when $B_0\leq B$, we always have $f(\x^{k})-f(\x^0)\leq -\frac{7\epsilon^{3/2}}{8\sqrt{\rho}}$ from Corollary \ref{lemma5}. That is, line 11 never executes when $B_0$ decreases to be smaller than $B$ after $\bO(\log_{c_0}\frac{1}{\epsilon})$ epochs and Algorithm \ref{AGD1p} is equivalent to Algorithm \ref{AGD1} in this case. When the decrease condition on line 8 does not hold, which indicates that the algorithm may diverge, we discard the whole iterates in this epoch and go back to the last iterate of the previous epoch, which is stored in $\x_{cur}^{0}$. We terminate Algorithm \ref{AGD1p} when $B_0\leq B$ and $k$ equals to $K$. On the other hand, we output the one of $\x^K$ and $\hat\y$ with smaller gradient norm. In practice, the last iterate always converges faster than the averaged iterate. We describe the $\bO (\epsilon^{-7/4})$ complexity of Algorithm \ref{AGD1p} in Theorem \ref{theorem1p} and defer the proofs until Section \ref{sec:proof_t1p}.

\begin{algorithm}[t]
   \caption{Adaptively Restarted AGD for Nonconvex Optimization (Ada-RAGD-NC)}
   \label{AGD1p}
\begin{algorithmic}[1]
   \STATE Initialize $\x^{-1}=\x^{0}=\x_{cur}^0=\x_{int}$, $k=0$, $B_0$.
   \WHILE{$k<K$ or $B_0>B$}
   \STATE $\y^{k}=\x^{k}+(1-\theta)(\x^{k}-\x^{k-1})$
   \STATE $\x^{k+1}=\y^{k}-\eta\nabla f(\y^{k})$
   \STATE $k=k+1$
   \IF{$k\sum_{t=0}^{k-1}\|\x^{t+1}-\x^{t}\|^2> \max\{B^2,B_0^2\}$ or $k> K$}
   \STATE $B_0=B_0/c_0$
   \IF{$f(\x^{k})-f(\x^0)\leq -\gamma\frac{\epsilon^{3/2}}{\sqrt{\rho}}$}
   \STATE $\x^{-1}=\x^{0}=\x^{k}$, $\x_{cur}^0=\x^{k}$, $k=0$
   \ELSE
   \STATE $\x^{-1}=\x^{0}=\x_{cur}^{0}$, $k=0$, $B_0=B_0/c_1$
   \ENDIF
   \ENDIF
   \ENDWHILE
   \STATE $K_0=\argmin_{\lfloor \frac{K}{2}\rfloor\leq k\leq K-1}\|\x^{k+1}-\x^{k}\|$
   \STATE $\hat\y=\frac{1}{K_0+1}\sum_{k=0}^{K_0}\y^{k}$
   \STATE Output $\x_{out}=\argmin_{\x^{K},\hat\y}\{\|\nabla f(\x^{K})\|,\|\nabla f(\hat\y)\|\}$
\end{algorithmic}
\end{algorithm}

\begin{theorem}\label{theorem1p}
Suppose that Assumption \ref{assump} holds. Let $\eta=\frac{1}{4L}$, $B=\sqrt{\frac{\epsilon}{\rho}}$, $\theta=4(\epsilon\rho\eta^2)^{1/4}\in(0,1)$, $K=\lfloor\frac{1}{\theta}\rfloor$, $\gamma\leq\frac{7}{8}$, $c_0>1$, and $c_1>1$. Then Algorithm \ref{AGD1p} terminates in at most $\bO\left(\frac{\triangle_fL^{1/2}\rho^{1/4}}{\epsilon^{7/4}}+\frac{L^{1/2}}{\epsilon^{1/4}\rho^{1/4}}\log \frac{\rho B_0}{\epsilon}\right)$ gradient computations and $\bO\left(\frac{\triangle_f\sqrt{\rho}}{\epsilon^{3/2}}+\log \frac{\rho B_0}{\epsilon}\right)$ function evaluations, and the output satisfies $\|\nabla f(\x_{out})\|\leq\bO(\epsilon)$.
\end{theorem}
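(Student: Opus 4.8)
The plan is to reuse the per-epoch potential-decrease machinery behind Theorem~\ref{theorem1} and to account separately for the two kinds of epochs Algorithm~\ref{AGD1p} can produce. First I would call an epoch \emph{productive} if it ends by executing line~9 (so that $f(\x^k)-f(\x^0)\le-\gamma\epsilon^{3/2}/\sqrt\rho$) and \emph{wasted} if it ends by executing line~11 (the iterates of the epoch are discarded and the run rolls back to $\x_{cur}^0$). The bookkeeping I would set up is: $B_0$ is never increased — it is divided by $c_0$ after a productive epoch and by $c_0c_1$ after a wasted one — and the ``committed'' value $f(\x_{cur}^0)$ is nonincreasing throughout the run, being unchanged across a wasted epoch and decreased by at least $\gamma\epsilon^{3/2}/\sqrt\rho$ across a productive one. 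Since $f(\x_{cur}^0)\ge\min_\x f(\x)=f(\x_{int})-\triangle_f$ at all times, this immediately bounds the number of productive epochs by $\triangle_f\sqrt\rho/(\gamma\epsilon^{3/2})=\bO(\triangle_f\sqrt\rho/\epsilon^{3/2})$.

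The crucial step is to show that once $B_0\le B$ no wasted epoch can occur, so that from then on Algorithm~\ref{AGD1p} runs exactly like Algorithm~\ref{AGD1}. When $B_0\le B$ we have $\max\{B^2,B_0^2\}=B^2$ and the while-loop cannot run past $k=K$, so the restart on line~6 can only be triggered by the ball condition $k\sum_{t=0}^{k-1}\|\x^{t+1}-\x^t\|^2>B^2$ (the clause $k>K$ never fires); Corollary~\ref{lemma5} then yields $f(\x^k)-f(\x^0)\le-\tfrac{7\epsilon^{3/2}}{8\sqrt\rho}\le-\gamma\epsilon^{3/2}/\sqrt\rho$ since $\gamma\le\tfrac78$, so line~8 holds and the epoch is productive. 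Hence every wasted epoch happens while $B_0>B$, and since each of them shrinks $B_0$ by the factor $c_0c_1>1$, there are at most $\log_{c_0c_1}(B_0/B)=\bO(\log\tfrac{\rho B_0}{\epsilon})$ of them (recall $B=\sqrt{\epsilon/\rho}$). This also proves termination: after $\bO(\log(B_0/B))$ epochs one has $B_0\le B$, after which every non-terminating epoch is productive and productive epochs are finitely many; so there is exactly one terminating epoch and the total number of epochs is $\bO\big(\tfrac{\triangle_f\sqrt\rho}{\epsilon^{3/2}}+\log\tfrac{\rho B_0}{\epsilon}\big)$.

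Next I would convert this epoch count into the stated complexities. Each epoch runs at most $K+1=\bO(1/\theta)$ AGD iterations — line~6 forces a restart as soon as $k>K$ while $B_0>B$, and the while-loop exits at $k=K$ when $B_0\le B$ — hence at most $\bO(1/\theta)$ gradient evaluations, plus exactly one function evaluation for the line~8 test. Since $\eta=\tfrac1{4L}$ and $\theta=4(\epsilon\rho\eta^2)^{1/4}$ give $1/\theta=\bO(L^{1/2}\epsilon^{-1/4}\rho^{-1/4})$, multiplying the epoch count by $\bO(1/\theta)$ yields the gradient bound $\bO\big(\tfrac{\triangle_f L^{1/2}\rho^{1/4}}{\epsilon^{7/4}}+\tfrac{L^{1/2}}{\epsilon^{1/4}\rho^{1/4}}\log\tfrac{\rho B_0}{\epsilon}\big)$, while the function count is just the epoch count $\bO\big(\tfrac{\triangle_f\sqrt\rho}{\epsilon^{3/2}}+\log\tfrac{\rho B_0}{\epsilon}\big)$. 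For the output guarantee, I note that the algorithm can only stop through the terminating epoch, which is an epoch with $B_0\le B$ that does not restart within $K$ iterations — precisely the configuration analysed in the proof of Theorem~\ref{theorem1}; so the same argument (Lemma~\ref{lemma6}) gives $\|\nabla f(\hat\y)\|=\bO(\epsilon)$, and since $\x_{out}$ is the one of $\x^K,\hat\y$ with the smaller gradient norm, $\|\nabla f(\x_{out})\|=\bO(\epsilon)$.

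The main obstacle I anticipate is not any single inequality but the careful case analysis itself: verifying that $f(\x_{cur}^0)$ is genuinely monotone under the roll-back rule (so that productive epochs are charged only against $\triangle_f$, not against some accumulated loss), and verifying that the regime $B_0\le B$ truly reduces the dynamics to those of Algorithm~\ref{AGD1}, so that both termination and the $\bO(\epsilon)$ stationarity bound can be quoted from Theorem~\ref{theorem1} rather than re-derived.
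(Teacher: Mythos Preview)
Your proposal is correct and follows essentially the same approach as the paper: partition epochs into productive/wasted (the paper's ``valid/invalid''), show via Corollary~\ref{lemma5} that wasted epochs cannot occur once $B_0\le B$ so their number is $\bO(\log(\rho B_0/\epsilon))$, bound productive epochs by the telescoping descent of $f(\x_{cur}^0)$, multiply by $K+1=\bO(1/\theta)$ for the gradient count, and invoke Lemma~\ref{lemma6} on the terminating epoch for the output guarantee. The only cosmetic difference is that the paper first proves a more general statement (Theorem~\ref{theorem1pp}, with dynamically varying $\eta$ and $\rho'$) and then specializes, whereas you argue directly for the fixed-parameter version; your bound $\log_{c_0c_1}(B_0/B)$ on wasted epochs is in fact slightly sharper than the paper's $\log_{c_0}(\rho B_0/\epsilon)$, but both are the same $\bO(\log\frac{\rho B_0}{\epsilon})$.
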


\begin{remark}\label{remark2}
Algorithm \ref{AGD1p} also applies to the case when the Lipschitz constants $L$ and $\rho$ are unknown. We can initialize a small guess of $\rho'$, tune an appropriate $\eta$, and replace line 11 of Algorithm \ref{AGD1p} by the following steps:
\begin{equation}
\x^{-1}=\x^{0}=\x_{cur}^{0}, \quad k=0, \quad B_0=\frac{B_0}{c_1},\quad \eta=\max\left(\frac{\eta}{c_2},\eta_{min}\right),\quad \rho'=\min(\rho' c_2^2,\rho_{max}'), \label{agdp-cont1}
\end{equation}
where $c_1\geq c_2>1$, and the output also satisfies $\|\nabla f(\x_{out})\|\leq\bO(\epsilon)$ within $\bO (\epsilon^{-7/4})$ gradient computations and $\bO (\epsilon^{-3/2})$ function evaluations. See Theorem \ref{theorem1pp} in Section \ref{sec:proof_t1p} for the details.
\end{remark}

\subsection{Extension to the Second-order Stationary Point}

Our restarted AGD can also find $\epsilon$-approximate second-order stationary point, namely a point $\x$ that satisfies
\begin{equation}
\|\nabla f(\x)\|\leq\epsilon\quad\mbox{and}\quad \lambda_{min}(\nabla^2 f(\x))\geq-\sqrt{\epsilon\rho},\label{def_2point}
\end{equation}
where $\lambda_{min}$ means the smallest eigenvalue. We follow \citep{jinchi-2017-icml,jinchi-18-apg} to add perturbations to the iterates. Specifically, we only need to replace line 7 of Algorithm \ref{AGD1} by the following step:
\begin{eqnarray}\label{cont12}
&\x^{-1}=\x^{0}=\x^{k}+\xi 1_{\|\nabla f(\y^{k-1})\|\leq\frac{B}{\eta}},\quad \xi\sim \mbox{Unif}(\B_0(r)),\quad k=0,
\end{eqnarray}
where $\mbox{Unif}(\B_0(r))$ means the uniform distribution in the ball $\B_0(r)$ with radius $r$ and center 0, and $1_{\|\nabla f(\y^{k-1})\|\leq\frac{B}{\eta}}=\left\{\begin{array}{cl}
    1, & \mbox{if } \|\nabla f(\y^{k-1})\|\leq\frac{B}{\eta},\\
    0, & \mbox{otherwise}.
  \end{array}\right.$

The convergence and complexity is presented in Theorem \ref{theorem3}. We see that the perturbed RAGD-NC needs at most $\bO(\epsilon^{-7/4}\log\frac{d}{\zeta\epsilon})$ gradient evaluations to find an $\epsilon$-approximate second-order stationary point with probability at least $1-\zeta$, where $d$ is the dimension of $\x$ in problem (\ref{problem}). Our algorithm has the same complexity with the one given in \citep{jinchi-18-apg}. Comparing with Theorem \ref{theorem1}, we see that this complexity is higher by the $\bO(\log\frac{d}{\zeta\epsilon})$ factor. Currently, it is unclear how to cancel it, and we conjecture that the polylogarithmic factor may not be removed when pursuing second-order stationary point \citep{Simchowitz-logd}.

\begin{theorem}\label{theorem3}
Suppose that Assumption \ref{assump} holds. Let $\chi=\bO(\log\frac{d}{\zeta\epsilon})\geq 1$, $\eta=\frac{1}{4L}$, $B=\frac{1}{288\chi^2}\sqrt{\frac{\epsilon}{\rho}}$, $\theta=\frac{1}{2}\left(\frac{\epsilon\rho}{L^2}\right)^{1/4}<1$, $K=\frac{2\chi}{\theta}$, $r=\min\{\frac{B}{2},\frac{\theta B}{20K},\sqrt{\frac{\theta B^2}{2K}}\}=\bO(\epsilon)$. Then the perturbed RAGD-NC (Algorithm \ref{AGD1} with (\ref{cont12})) terminates in at most $\bO\left(\frac{\triangle_fL^{1/2}\rho^{1/4}\chi^6}{\epsilon^{7/4}}\right)$ gradient computations and the output satisfies $\|\nabla f(\hat\y)\|\leq\epsilon$, where $\triangle_f=f(\x_{int})-\min_{\x}f(\x)$. It also satisfies $\lambda_{min}(\nabla^2 f(\hat\y))\geq -1.011\sqrt{\epsilon\rho}$ with probability at least $1-\zeta$.
\end{theorem}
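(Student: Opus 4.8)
The plan is to reduce Theorem \ref{theorem3} to two statements about the \emph{terminal} epoch of the perturbed algorithm, i.e. the epoch that runs all $K$ iterations without triggering a restart and that produces $\hat\y$: a gradient bound $\|\nabla f(\hat\y)\|\le\epsilon$, and a negative-curvature bound $\lambda_{\min}(\nabla^2 f(\hat\y))\ge-1.011\sqrt{\epsilon\rho}$. The first is inherited, up to constants, from the machinery behind Theorem \ref{theorem1}; the second comes from a saddle-escape argument in the spirit of \citep{jinchi-2017-icml,jinchi-18-apg}, in which the perturbation (\ref{cont12}) plays the role of the randomization and the ball-based restart condition $k\sum_{t=0}^{k-1}\|\x^{t+1}-\x^{t}\|^2>B^2$ plays the role of the escape detector.

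For the gradient bound I would re-run the analysis of Theorem \ref{theorem1} — in particular the averaging argument of Lemma \ref{lemma6} — with the rescaled constants (the $\chi$-dependent $\theta$, $K$, and the smaller $B=\frac{1}{288\chi^2}\sqrt{\epsilon/\rho}$), which only changes numerical constants and, since $B$ is smaller by a $\chi^2$ factor, sharpens the bound from $82\epsilon$ to $\epsilon$. One then checks that the perturbations are harmless: each $\xi$ has $\|\xi\|\le r=\bO(\epsilon)$, so by (\ref{gd_smooth1}) it changes $f$ by at most $\bO(Lr^2)$, which is dominated by the $\Omega(\epsilon^{3/2}/\sqrt{\rho})$ per-epoch decrease from Corollary \ref{lemma5}; since the number of epochs is $\bO(\triangle_f\sqrt{\rho}/\epsilon^{3/2})$ up to $\mathrm{poly}(\chi)$ factors, the cumulative perturbation cost cannot break the telescoping. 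Together with $K=\bO(\chi/\theta)$ gradient evaluations per epoch this yields the claimed $\bO(\triangle_f L^{1/2}\rho^{1/4}\chi^6/\epsilon^{7/4})$ total.

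For the negative-curvature bound I would argue by contradiction. Since the terminal epoch runs $K$ steps without the ball condition firing, all its iterates — hence $\hat\y$ — stay within $\bO(B)$ of the epoch's starting point $\x^0$, and taking $k=1$ in the ball condition gives $\|\nabla f(\x^0)\|\le B/\eta$. I would first note that this epoch must have been entered with a genuine perturbation: if $\xi=0$ (i.e. $\|\nabla f(\y^{k-1})\|>B/\eta$ at the preceding restart), then $\x^0$ equals the previous epoch's last iterate, which by $L$-smoothness has gradient norm $>\tfrac34 B/\eta$, so $\|\x^1-\x^0\|=\eta\|\nabla f(\x^0)\|>\tfrac34 B$ and the ball condition fires by the second iteration, contradicting that the terminal epoch runs $K\gg 2$ steps. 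Hence $\xi\ne0$ and the perturbation center $\tilde\x:=\x^0-\xi$ satisfies $\|\nabla f(\tilde\x)\|\le\tfrac54 B/\eta$. The core is then a saddle-escape lemma: if in addition $\lambda_{\min}(\nabla^2 f(\tilde\x))\le-\sqrt{\epsilon\rho}$, then over $\xi\sim\mathrm{Unif}(\B_0(r))$ the ball condition fires within $K$ iterations with probability at least $1-d^{-\Omega(\chi)}\mathrm{poly}(1/\epsilon)$ — contradicting termination. So with that probability $\lambda_{\min}(\nabla^2 f(\tilde\x))>-\sqrt{\epsilon\rho}$, and $\rho$-Hessian-Lipschitzness (Assumption \ref{assump}) with $\|\hat\y-\tilde\x\|\le\bO(B)+r$ and $\rho(\bO(B)+r)\le0.011\sqrt{\epsilon\rho}$ — exactly what the factor $\frac1{288\chi^2}$ in $B$ and $r=\bO(\epsilon)$ buy — gives $\lambda_{\min}(\nabla^2 f(\hat\y))\ge-1.011\sqrt{\epsilon\rho}$; a union bound over the $\bO(\triangle_f\sqrt{\rho}/\epsilon^{3/2})\,\mathrm{poly}(\chi)$ epochs with $\chi=\bO(\log\frac{d}{\zeta\epsilon})$ keeps the total failure probability at most $\zeta$.

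The main obstacle is the saddle-escape lemma, i.e. controlling the momentum recursion $\y^k=\x^k+(1-\theta)(\x^k-\x^{k-1})$, $\x^{k+1}=\y^k-\eta\nabla f(\y^k)$ over $K=\bO(\chi/\theta)$ steps near a saddle of $f$. I would prove it by a two-point coupling: run two trajectories from $\x^0+\xi$ and $\x^0+\xi'$ with $\xi-\xi'$ a small multiple of the unit eigenvector $\e_1$ associated with $\lambda_{\min}(\nabla^2 f(\tilde\x))$; their difference $\w^k$ obeys a linear recursion driven by $\nabla^2 f(\tilde\x)$ plus a remainder bounded through $\rho$-Hessian-Lipschitzness under the assumption that both trajectories remain localized, and along $\e_1$ that recursion has characteristic root $1+\Omega(\theta)$ (one checks $\eta|\lambda_{\min}(\nabla^2 f(\tilde\x))|\ge\eta\sqrt{\epsilon\rho}=\Theta(\theta^2)$ for the chosen parameters), so $\langle\w^k,\e_1\rangle$ grows geometrically and after $K$ steps exceeds $B$, forcing at least one trajectory out of the ball. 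Hence the set of ``stuck'' perturbations is a slab of width $\bO\bigl(B(1+\Omega(\theta))^{-K}\bigr)=\bO(B\,d^{-\Omega(\chi)})$ along $\e_1$, of probability at most $\sqrt{d}\,\bO(B/r)\,d^{-\Omega(\chi)}$. The two delicate points are (i) keeping the Hessian-Lipschitz remainder subdominant to the geometric growth over all $K$ steps — which is precisely why $B$ and $r$ are taken polynomially small in $\epsilon$ with $\chi$-dependent constants — and (ii) turning ``$\langle\w^k,\e_1\rangle$ large'' into ``$k\sum_{t}\|\x^{t+1}-\x^t\|^2>B^2$'', which holds because consecutive iterates differ by $\Theta(\theta)$ times the running displacement from $\x^0$. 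The remaining bookkeeping (epoch count, per-epoch cost, calibration of $\chi$) is then routine.
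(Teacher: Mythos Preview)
Your proposal is correct and follows essentially the same approach the paper describes: the paper explicitly defers the proof of Theorem~\ref{theorem3} to its conference supplementary, stating that it is ``essentially identical to those in \citep{jinchi-18-apg}'', i.e.\ the two-point coupling saddle-escape argument combined with the per-epoch descent and averaging machinery of Theorem~\ref{theorem1}, which is exactly what you outline. One small technical point to patch: your claim that the terminal epoch ``must have been entered with a genuine perturbation'' tacitly assumes a preceding restart, so you should also handle (or perturb away) the case where the very first epoch is terminal.
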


The proof of this theorem is essentially identical to those in \citep{jinchi-18-apg}. We omit the proofs and they can be found in the supplementary material of our conference version \citep{li-2022-icml}.

\section{Restarted Heavy Ball Method}

Polyak's classical heavy ball method \citep{Polyak-1964} iterates with the following step
\begin{equation}\notag
\x^{k+1}=\x^{k}-\eta\nabla f(\x^{k})+(1-\theta)(\x^{k}-\x^{k-1}),
\end{equation}
where $\eta=\frac{4}{(\sqrt{L}+\sqrt{\mu})^2}$ and $1-\theta=\frac{(\sqrt{L}-\sqrt{\mu})^2}{(\sqrt{L}+\sqrt{\mu})^2}$ for strongly convex problems. In the deep learning literature, people often use the following equivalent iterations empirically with the running average \citep{hb-dl-2013},
\begin{eqnarray}
\begin{aligned}\notag
\m^k=\beta\m^{k-1}+\nabla f(\x^k),\quad\x^{k+1}=\x^k-\eta\m^k,
\end{aligned}
\end{eqnarray}
where $\m^{-1}=0$ and $\beta=1-\theta$ for the deterministic problems. When applying the heavy ball iteration to nonconvex optimization, people often set $\eta=\bO(\frac{\theta}{L})$ to ensure the convergence \citep{iPiano-2014,hb-sun-2019}, which prevents us from proving faster convergence in theory and slows down the algorithm in practice when $\theta$ is small. To address this issue, similar to RAGD-NC, we combine the restart mechanism with the heavy ball method such that $\eta=\bO(\frac{1}{L})$ while maintaining $\theta$ small. Our method is presented in Algorithm \ref{HB1}. It runs Polyak's classical HB iteration until the ``if condition" triggers. Then we restart from the auxiliary vector $\z^k$, a convex combination of $\x^k$ and $\x^{k-1}$, and do the next round of HB iterations. Algorithm \ref{HB1} shares almost the same framework as Algorithm \ref{AGD1}, and the only difference comes from the iterate $\z^k$, which is designed to fit the proof. See Remark \ref{proof-comp} for the detailed reason.

\begin{algorithm}[t]
   \caption{Restarted HB for Nonconvex Optimization (RHB-NC)}
   \label{HB1}
\begin{algorithmic}[1]
   \STATE Initialize $\x^{-1}=\x^{0}=\x_{int}$, $k=0$.
   \WHILE{$k<K$}
   \STATE $\x^{k+1}=\x^{k}-\eta\nabla f(\x^{k})+(1-\theta)(\x^{k}-\x^{k-1})$
   \STATE $k=k+1$
   \IF{$k\sum_{t=0}^{k-1}\|\x^{t+1}-\x^{t}\|^2> B^2$}
   \STATE $\z^k=\frac{\x^k+(1-2\theta)(1-\theta)\x^{k-1}}{1+(1-2\theta)(1-\theta)}$
   \STATE $\x^{-1}=\x^{0}=\z^{k}$, $k=0$
   \ENDIF
   \ENDWHILE
   \STATE $K_0=\argmin_{\lfloor \frac{K}{2}\rfloor\leq k\leq K-1}\|\x^{k+1}-\x^{k}\|$
   \STATE Output $\hat\x=\frac{1}{K_0+1}\sum_{k=0}^{K_0}\x^{k}$
\end{algorithmic}
\end{algorithm}

The main result is given in Theorem \ref{theorem1hb}, which also establishes the $\bO(\epsilon^{-7/4})$ complexity to find an $\epsilon$-approximate first-order stationary point, and we defer the proofs until Section \ref{sec:proof_t2}. Comparing with Theorem \ref{theorem1}, we see that the two algorithms need the same assumptions, share the same convergence rate, and have almost the same parameter settings, which indicate that no one is superior to the other in theory for nonconvex optimization. As a comparison, the heavy ball method requires more assumptions for strongly convex problems and has the slower convergence rate in theory for convex problems than AGD.

\begin{theorem}\label{theorem1hb}
Suppose that Assumption \ref{assump} holds. Let $\eta=\frac{1}{4L}$, $B=\sqrt{\frac{\epsilon}{4\rho}}$, $\theta=10\left(\epsilon\rho\eta^2\right)^{1/4}\in(0,\frac{1}{10}]$, and $K=\frac{1}{\theta}$. Then Algorithm \ref{HB1} terminates in at most $\frac{\triangle_fL^{1/2}\rho^{1/4}}{\epsilon^{7/4}}$ gradient computations and the output satisfies $\|\nabla f(\hat\x)\|\leq242\epsilon$, where $\triangle_f=f(\x_{int})-\min_{\x}f(\x)$.
\end{theorem}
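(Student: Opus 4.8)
The plan is to mirror the structure of the proof of Theorem \ref{theorem1} for RAGD-NC, since the two algorithms share the same skeleton: a descent-per-epoch estimate forces the number of restarts to be at most $\bO(\triangle_f\sqrt{\rho}/\epsilon^{3/2})$, and a small-gradient-at-termination estimate handles the epoch that runs the full $K$ iterations without restarting. Concretely, I would first establish an energy/Lyapunov inequality for one HB iteration. The natural potential is a Hamiltonian-type quantity $f(\x^k)+\frac{c}{\eta}\|\x^k-\x^{k-1}\|^2$ for a suitable constant $c$; expanding $f(\x^{k+1})$ via the gradient-Lipschitz bound \eqref{gd_smooth1} together with the HB update $\x^{k+1}=\x^k-\eta\nabla f(\x^k)+(1-\theta)(\x^k-\x^{k-1})$ should give a per-step decrease of the form $-\Omega(\eta)\|\nabla f(\x^k)\|^2 + (\text{momentum cross terms})$. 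The $(1-\theta)$ momentum term is what prevents monotone decrease; this is where the restart condition $k\sum_{t=0}^{k-1}\|\x^{t+1}-\x^t\|^2\le B^2$ enters, bounding the accumulated ``bad'' contributions over an epoch.

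The second ingredient, and the one I expect to be the technical heart, is to show that within a single epoch the objective decreases by at least $\Omega(\epsilon^{3/2}/\sqrt{\rho})$ \emph{whenever the restart triggers} — the analogue of Corollary \ref{lemma5}. The key is the $\rho$-Hessian-Lipschitz inequality \eqref{h_smooth1}: one approximates $f$ along the trajectory by its second-order Taylor expansion around $\x^0$, controls the cubic remainder by $\frac{\rho}{6}\|\x^t-\x^0\|^3$ (which is small precisely because $\|\x^t-\x^0\|\le\sum\|\x^{t+1}-\x^t\|\le B$ by Cauchy–Schwarz and the restart condition, and $B=\sqrt{\epsilon/4\rho}$), and then argues about the quadratic $\x^T\nabla^2f(\x^0)\x$-dynamics of HB. Because $\eta=\bO(1/L)$ and $\theta=\bO((\epsilon\rho\eta^2)^{1/4})$, the HB recursion on the quadratic part is a stable linear system whose energy one can track directly, \emph{without} invoking the convergence theory of strongly convex HB — this is the claimed elementary feature. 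The restart from $\z^k=\frac{\x^k+(1-2\theta)(1-\theta)\x^{k-1}}{1+(1-2\theta)(1-\theta)}$ rather than from $\x^k$ is chosen so that the momentum-like term $\|\x^k-\x^{k-1}\|^2$ is absorbed cleanly into the potential at the restart point (the convex-combination coefficients are tuned so the cross terms in the Lyapunov function vanish or become favorable); I would verify this algebraic identity carefully, as it is the one place where RHB-NC genuinely differs from RAGD-NC.

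Given these two lemmas, the complexity bound follows by a counting argument identical to the AGD case: the number of restarting epochs is at most $\triangle_f/(\Omega(\epsilon^{3/2}/\sqrt{\rho})) = \bO(\triangle_f\sqrt{\rho}/\epsilon^{3/2})$, each epoch costs at most $K=1/\theta=\bO(L^{1/2}/(\epsilon\rho)^{1/4})$ gradient evaluations, and multiplying gives $\bO(\triangle_f L^{1/2}\rho^{1/4}/\epsilon^{7/4})$; the stated clean constant $\triangle_f L^{1/2}\rho^{1/4}/\epsilon^{7/4}$ comes from being careful with the numerical constants in $\theta=10(\epsilon\rho\eta^2)^{1/4}$ and $B=\sqrt{\epsilon/4\rho}$. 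Finally, for the bound $\|\nabla f(\hat\x)\|\le 242\epsilon$ on the output: in the terminating (non-restarting) epoch, the restart condition $K\sum_{t=0}^{K-1}\|\x^{t+1}-\x^t\|^2\le B^2$ forces $\min_{K/2\le k\le K-1}\|\x^{k+1}-\x^k\|^2\le \frac{2}{K}\cdot\frac{B^2}{K}$, i.e. the selected $\|\x^{K_0+1}-\x^{K_0}\|=\eta\|\nabla f(\x^{K_0})\|$ is $\bO(\epsilon\eta)$; then, since $\hat\x$ averages iterates $\x^0,\dots,\x^{K_0}$ that all lie within $\bO(B)$ of each other, Hessian-Lipschitzness gives $\|\nabla f(\hat\x)-\nabla f(\x^{K_0})\| \le \|\nabla^2 f(\x^{K_0})\|\cdot\bO(B)+\rho\cdot\bO(B^2)$, and one uses the descent inequality to show the relevant Hessian is not too positive in the direction that matters — carefully tracking constants yields $242\epsilon$. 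I expect the averaging step (the analogue of Lemma \ref{lemma6}) to be the subtlest part of this final estimate, since it is exactly the device that removes the $\bO(\log\frac1\epsilon)$ factor, and it requires that the gradient be controlled \emph{on average} over the second half of the epoch rather than just at the last iterate.
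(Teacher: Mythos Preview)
Your overall architecture and the descent-per-epoch half are pointed in the right direction. One clarification on the role of $\z^k$: in the paper's argument the coupling arises not from a Lyapunov cross term but from the fact that, after summing the per-step quadratic inequality over an epoch, the left-hand side carries \emph{both} $g_j(\widetilde\x_j^{\K})-g_j(\widetilde\x_j^{0})$ and $(1-2\theta)(1-\theta)\bigl(g_j(\widetilde\x_j^{\K-1})-g_j(\widetilde\x_j^{0})\bigr)$. The point $\z^{\K}=\alpha\x^{\K}+(1-\alpha)\x^{\K-1}$ with $\alpha=\frac{1}{1+(1-2\theta)(1-\theta)}$ is chosen so that $g(\widetilde\z^{\K})$ matches this weighted sum up to a harmless $|\widetilde\x_j^{\K}-\widetilde\x_j^{\K-1}|^2$ correction; the same convex combination is what makes Lemma~\ref{lemma1hb} go through via convexity of $f(\cdot)+\frac{L}{2}\|\cdot-\x^0\|^2$.

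The genuine gap is in your last-epoch argument for $\|\nabla f(\hat\x)\|\le 242\epsilon$. First, the identity $\|\x^{K_0+1}-\x^{K_0}\|=\eta\|\nabla f(\x^{K_0})\|$ is false for HB: the step contains the momentum $(1-\theta)(\x^{K_0}-\x^{K_0-1})$. Second, even granting the bound $\|\x^{K_0+1}-\x^{K_0}\|\le\sqrt{2}B/K$, the quantity $B/(\eta K)=B\theta/\eta$ is of order $L^{1/2}\epsilon^{3/4}\rho^{-1/4}$, not $\bO(\epsilon)$ (recall $\theta\le\tfrac{1}{10}$ forces $L^2\gg\epsilon\rho$). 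Third, your fallback $\|\nabla f(\hat\x)-\nabla f(\x^{K_0})\|\le\|\nabla^2 f(\x^{K_0})\|\cdot\bO(B)+\rho\,\bO(B^2)$ has a leading term of size $LB=\bO(L\sqrt{\epsilon/\rho})$, again far from $\bO(\epsilon)$. The mechanism that actually works is the one in Lemma~\ref{lemma6}: pass to the quadratic model $g$ at $\x^0$, use that $\nabla g(\widetilde{\hat\x})=\frac{1}{K_0+1}\sum_{k=0}^{K_0}\nabla g(\widetilde\x^k)$ because $g$ is quadratic, and read off from the HB update that $\eta\nabla g(\widetilde\x^k)=-(\widetilde\x^{k+1}-\widetilde\x^{k})+(1-\theta)(\widetilde\x^{k}-\widetilde\x^{k-1})-\eta\widetilde\bdelta^k$. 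The sum telescopes to $-(\widetilde\x^{K_0+1}-\widetilde\x^{K_0})-\theta(\widetilde\x^{K_0}-\widetilde\x^{0})-\eta\sum_k\widetilde\bdelta^k$, and the division by $K_0+1\ge K/2$ supplies the extra $1/K$ that turns $B/(\eta K)$ into $B/(\eta K^2)=\bO(\epsilon)$. Without this telescoping-then-averaging step the $\bO(\epsilon)$ bound on the output gradient cannot be reached.
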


\begin{algorithm}[t]
   \caption{Adaptively Restarted HB for Nonconvex Optimization (Ada-RHB-NC)}
   \label{HB1p}
\begin{algorithmic}[1]
   \STATE Initialize $\x^{-1}=\x^{0}=\x_{cur}^0=\x_{int}$, $k=0$, $B_0$.
   \WHILE{$k<K$ or $B_0>B$}
   \STATE $\x^{k+1}=\x^{k}-\eta\nabla f(\x^{k})+(1-\theta)(\x^{k}-\x^{k-1})$
   \STATE $k=k+1$
   \IF{$k\sum_{t=0}^{k-1}\|\x^{t+1}-\x^{t}\|^2> \max\{B^2,B_0^2\}$ or $k> K$}
   \STATE $\z^k=\frac{\x^k+(1-2\theta)(1-\theta)\x^{k-1}}{1+(1-2\theta)(1-\theta)}$
   \STATE $B_0=B_0/c_0$
   \IF{$f(\z^{k})-f(\x^0)\leq -\gamma\frac{\epsilon^{3/2}}{\sqrt{\rho}}$}
   \STATE $\x^{-1}=\x^{0}=\z^{k}$, $\x_{cur}^0=\z^{k}$, $k=0$
   \ELSE
   \STATE $\x^{-1}=\x^{0}=\x_{cur}^{0}$, $k=0$, $B_0=B_0/c_1$
   \ENDIF
   \ENDIF
   \ENDWHILE
   \STATE $K_0=\argmin_{\lfloor \frac{K}{2}\rfloor\leq k\leq K-1}\|\x^{k+1}-\x^{k}\|$
   \STATE $\hat\x=\frac{1}{K_0+1}\sum_{k=0}^{K_0}\x^{k}$
   \STATE Output $\x_{out}=\argmin_{\x^{K},\hat\x}\{\|\nabla f(\x^{K})\|,\|\nabla f(\hat\x)\|\}$
\end{algorithmic}
\end{algorithm}

In practice, Algorithm \ref{HB1} has the same disadvantages as Algorithm \ref{AGD1} when $B$ is small. Similar to Algorithm \ref{AGD1p}, we also propose an adaptive implementation of Algorithm \ref{HB1}, and present it in Algorithm \ref{HB1p}. Theorem \ref{theorem1hbp} gives the $\bO(\epsilon^{-7/4})$ complexity.

\begin{theorem}\label{theorem1hbp}
Suppose that Assumption \ref{assump} holds. Let $\eta=\frac{1}{4L}$, $B=\sqrt{\frac{\epsilon}{4\rho}}$, $\theta=10\left(\epsilon\rho\eta^2\right)^{1/4}\in(0,\frac{1}{10}]$, $K=\lfloor\frac{1}{\theta}\rfloor$, $\gamma\leq 1$, $c_0>1$, and $c_1>1$. Then Algorithm \ref{HB1p} terminates in at most $\bO\left(\frac{\triangle_fL^{1/2}\rho^{1/4}}{\epsilon^{7/4}}+\frac{L^{1/2}}{\epsilon^{1/4}\rho^{1/4}}\log \frac{\rho B_0}{\epsilon}\right)$ gradient computations and $\bO\left(\frac{\triangle_f\sqrt{\rho}}{\epsilon^{3/2}}+\log \frac{\rho B_0}{\epsilon}\right)$ function evaluations, and the output satisfies $\|\nabla f(\x_{out})\|\leq\bO(\epsilon)$.
\end{theorem}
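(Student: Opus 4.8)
The plan is to reuse the argument behind Theorem~\ref{theorem1p}, with the AGD ingredients replaced by their heavy-ball analogues proved in the analysis of Theorem~\ref{theorem1hb}: the per-epoch descent corollary and the averaged-gradient lemma (the heavy-ball counterparts of Corollary~\ref{lemma5} and Lemma~\ref{lemma6}). The first invariant to record is this: at the start of every epoch $\x^0=\x_{cur}^0$, and $\x_{cur}^0$ is modified only on line~9, and then only to a point $\z^k$ with $f(\z^k)<f(\x^0)=f(\x_{cur}^0)$ because of the test on line~8. Hence $f(\x_{cur}^0)$ is non-increasing along the whole run and never exceeds $f(\x_{int})$, so the cumulative decrease of $f(\x_{cur}^0)$ is at most $\triangle_f$; since each epoch that executes line~9 (call these the \emph{descent epochs}) shrinks $f(\x_{cur}^0)$ by at least $\gamma\epsilon^{3/2}/\sqrt\rho$, there are $\bO\left(\frac{\triangle_f\sqrt\rho}{\epsilon^{3/2}}\right)$ of them.

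The second invariant is that $B_0$ is divided by at least $c_0>1$ at the end of every epoch (line~7), so after at most $\log_{c_0}(B_0/B)=\bO\left(\log\frac{\rho B_0}{\epsilon}\right)$ epochs we reach $B_0\le B$, which is then preserved forever. Call the epochs run while $B_0>B$ the Phase-1 epochs; by the second invariant there are $\bO\left(\log\frac{\rho B_0}{\epsilon}\right)$ of them, and each uses at most $K+1=\bO(1/\theta)=\bO(L^{1/2}\epsilon^{-1/4}\rho^{-1/4})$ heavy-ball steps --- the extra step coming from the ``$k>K$'' clause, which can fire only while $B_0>B$ --- together with one function evaluation on line~8, regardless of which branch (line~9 or lines~11--12) is taken. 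This already accounts for the $\bO\left(\frac{L^{1/2}}{\epsilon^{1/4}\rho^{1/4}}\log\frac{\rho B_0}{\epsilon}\right)$ gradient and $\bO\left(\log\frac{\rho B_0}{\epsilon}\right)$ function evaluations in the stated bounds.

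Next I would show that once $B_0\le B$ the iteration reduces exactly to RHB-NC (Algorithm~\ref{HB1}): the restart threshold $\max\{B^2,B_0^2\}$ becomes $B^2$, the loop is entered only when $k<K$ so the ``$k>K$'' clause is dead, and by the heavy-ball counterpart of Corollary~\ref{lemma5} every epoch that triggers the ball condition satisfies $f(\z^k)-f(\x^0)\le-\epsilon^{3/2}/\sqrt\rho\le-\gamma\epsilon^{3/2}/\sqrt\rho$ (using $\gamma\le1$), so line~8 always passes and the update of $\x_{cur}^0$ on line~9 is harmless bookkeeping. Consequently, apart from the single final epoch, every epoch run while $B_0\le B$ is a descent epoch. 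By the first invariant the total number of descent epochs over the whole run is $\bO\left(\frac{\triangle_f\sqrt\rho}{\epsilon^{3/2}}\right)$, each costing at most $K=\bO(1/\theta)$ gradient evaluations and one function evaluation; adding the final epoch ($\le K$ gradient evaluations, after which $k=K$ and $B_0\le B$ force the loop to exit) and the Phase-1 estimate gives termination and the claimed $\bO\left(\frac{\triangle_fL^{1/2}\rho^{1/4}}{\epsilon^{7/4}}+\frac{L^{1/2}}{\epsilon^{1/4}\rho^{1/4}}\log\frac{\rho B_0}{\epsilon}\right)$ gradient and $\bO\left(\frac{\triangle_f\sqrt\rho}{\epsilon^{3/2}}+\log\frac{\rho B_0}{\epsilon}\right)$ function evaluations.

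It remains to bound $\|\nabla f(\x_{out})\|$. At termination the final epoch reached $k=K$ without triggering the ball condition, i.e.\ $K\sum_{t=0}^{K-1}\|\x^{t+1}-\x^{t}\|^2\le B^2$; applying the heavy-ball counterpart of Lemma~\ref{lemma6} to this epoch yields $\|\nabla f(\hat\x)\|\le\bO(\epsilon)$, and since $\x_{out}$ is whichever of $\x^K,\hat\x$ has the smaller gradient norm, $\|\nabla f(\x_{out})\|\le\bO(\epsilon)$. I expect the main obstacle to be not any single estimate --- those are inherited from the proof of Theorem~\ref{theorem1hb} --- but the two bookkeeping facts: that $f(\x_{cur}^0)$ never increases, so the rollback on line~12 cannot destroy progress; and that once $B_0\le B$ the adaptive iteration is literally RHB-NC, so the per-epoch descent corollary and the averaged-gradient lemma apply verbatim no matter where the epoch starts. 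Once these are in place, the $\triangle_f$ budget for descent epochs and the geometric $B_0$ budget for the remaining epochs combine into exactly the claimed complexity.
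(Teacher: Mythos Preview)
Your proposal is correct and follows essentially the same route as the paper, which simply states that the proof of Theorem~\ref{theorem1hbp} ``is almost the same to that of Theorem~\ref{theorem1p}'' and omits the details; your valid/invalid (descent/Phase-1) epoch accounting, the observation that once $B_0\le B$ the adaptive scheme collapses to RHB-NC so the per-epoch descent corollary forces line~8 to hold (using $\gamma\le 1$), and the final invocation of the heavy-ball averaged-gradient lemma all mirror the paper's argument for Theorem~\ref{theorem1p} with the HB ingredients from Section~\ref{sec:proof_t2} substituted in. The only cosmetic difference is that you partition epochs by phase ($B_0>B$ versus $B_0\le B$) while the paper partitions by validity, but since invalid epochs can occur only while $B_0>B$ the two decompositions yield the same bound.
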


\section{Proof of the Theorems}\label{section:proof}

We prove Theorems \ref{theorem1}, \ref{theorem1p}, and \ref{theorem1hb} in this section. The proof of Theorem \ref{theorem1hbp} is almost the same to that of Theorem \ref{theorem1p} and we omit the details.

\subsection{Proof of Theorem \ref{theorem1}}\label{sec:proof_t1}
We prove the convergence rate of Algorithm \ref{AGD1} in this section. Denote $\K$ to be the iteration number when the ``if condition" on line 6 of Algorithm \ref{AGD1} triggers, that is,
\begin{equation}
\K=\min_k\left\{k\Bigg|k\sum_{t=0}^{k-1}\|\x^{t+1}-\x^{t}\|^2>B^2\right\}.\label{define_K}
\end{equation}
For each epoch consisting of one round of AGD from iterations $k=0$ to $k=\K$, we have
\begin{subequations}
\begin{align}
&1\leq\K\leq K,\qquad\K\sum_{t=0}^{\K-1}\|\x^{t+1}-\x^{t}\|^2 >B^2,\qquad\mbox{and}\label{cond1}\\
&\|\x^{k}-\x^{0}\|^2=\left\|\sum_{t=0}^{k-1}\x^{t+1}-\x^{t}\right\|^2\leq k\sum_{t=0}^{k-1}\|\x^{t+1}-\x^{t}\|^2\leq B^2,\forall k< \K,\label{cond2}
\end{align}
\end{subequations}
where the last inequality comes from the definition of $\K$. From the update of $\y$ on line 3 of Algorithm \ref{AGD1}, we also have
\begin{equation}
\|\y^{k}-\x^{0}\|\leq\|\x^{k}-\x^{0}\|+\|\x^{k}-\x^{k-1}\|\leq 2B,\forall k< \K.\label{cond0}
\end{equation}
On the other hand, for the last epoch where the ``if condition" does not trigger and the while loop breaks when $k$ increases to $K$, we have
\begin{subequations}
\begin{align}
&\|\x^{k}-\x^{0}\|^2\leq k\sum_{t=0}^{k-1}\|\x^{t+1}-\x^{t}\|^2\leq B^2,\forall k\leq K,\label{cond3}\\
&\|\y^{k}-\x^{0}\|\leq 2B,\forall k\leq K.\label{cond4}
\end{align}
\end{subequations}
We will show that the function value decreases at least $\bO(\epsilon^{1.5})$ in each epoch except the last one in Sections \ref{subsec1} and \ref{subsec2}. Thus, Algorithm \ref{AGD1} terminates in at most $\bO(\epsilon^{-1.5})$ epochs. Since each epoch needs at most $\bO(\epsilon^{-0.25})$ iterations, Algorithm \ref{AGD1} requires at most $\bO(\epsilon^{-1.75})$ total gradient evaluations. In the last epoch, we will show in Section \ref{subsec3} that the gradient norm at the output iterate is less than $\bO(\epsilon)$.

\subsubsection{Large Gradient of $\|\nabla f(\y^{\K-1})\|$}\label{subsec1}
We first consider the case when $\|\nabla f(\y^{\K-1})\|$ is large.
\begin{lemma}\label{lemma1}
Suppose that Assumption \ref{assump} holds. Let $\eta\leq\frac{1}{4L}$ and $0\leq\theta\leq 1$. In each epoch of Algorithm \ref{AGD1} where the ``if condition" triggers, when $\|\nabla f(\y^{\K-1})\|> \frac{B}{\eta}$, we have
\begin{eqnarray}
\begin{aligned}\notag
f(\x^{\K})-f(\x^{0})\leq-\frac{B^2}{4\eta}.
\end{aligned}
\end{eqnarray}
\end{lemma}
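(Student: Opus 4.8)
The plan is to exploit the fact that in the epoch where the ``if condition'' triggers at iteration $\K$, the ball constraint $\|\x^{k}-\x^{0}\|\le B$ and $\|\y^{k}-\x^{0}\|\le 2B$ hold for all $k<\K$, while a \emph{single} gradient step with a large gradient at $\y^{\K-1}$ produces enough descent by itself. First I would write down the standard one-step descent inequality coming from $L$-smoothness, (\ref{gd_smooth1}), applied to the gradient step $\x^{\K}=\y^{\K-1}-\eta\nabla f(\y^{\K-1})$:
\begin{equation}
f(\x^{\K})\le f(\y^{\K-1})-\eta\left(1-\frac{L\eta}{2}\right)\|\nabla f(\y^{\K-1})\|^2\le f(\y^{\K-1})-\frac{7\eta}{8}\|\nabla f(\y^{\K-1})\|^2,\notag
\end{equation}
using $\eta\le\frac{1}{4L}$. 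Since $\|\nabla f(\y^{\K-1})\|>\frac{B}{\eta}$, the descent term is at least $\frac{7\eta}{8}\cdot\frac{B^2}{\eta^2}=\frac{7B^2}{8\eta}$, so $f(\x^{\K})\le f(\y^{\K-1})-\frac{7B^2}{8\eta}$.

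Next I would bound $f(\y^{\K-1})-f(\x^{0})$ from above. Here $\y^{\K-1}=\x^{\K-1}+(1-\theta)(\x^{\K-1}-\x^{\K-2})$, and by (\ref{cond2}) we have $\|\x^{\K-1}-\x^{0}\|\le B$ and by (\ref{cond0}) $\|\y^{\K-1}-\x^{0}\|\le 2B$. The natural tool is again $L$-smoothness around $\x^{0}$: $f(\y^{\K-1})-f(\x^{0})\le\langle\nabla f(\x^{0}),\y^{\K-1}-\x^{0}\rangle+\frac{L}{2}\|\y^{\K-1}-\x^{0}\|^2$. The quadratic term is at most $\frac{L}{2}(2B)^2=2LB^2$. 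The linear term is the delicate piece: a priori $\|\nabla f(\x^{0})\|$ is not controlled, so I would instead bound $f(\y^{\K-1})-f(\x^{0})$ by telescoping along the iterates of the epoch, using that each AGD step from $\x^{0}$ onward has small per-step movement (controlled through $k\sum_{t=0}^{k-1}\|\x^{t+1}-\x^{t}\|^2\le B^2$) — or, more cleanly, observe that whatever upper bound on $f(\y^{\K-1})-f(\x^0)$ one gets is $\bO(LB^2)$ in magnitude and, after choosing the constant $B=\sqrt{\epsilon/\rho}$ together with the relation $\theta=4(\epsilon\rho\eta^2)^{1/4}$, can be absorbed so that the net decrease is at least $\frac{B^2}{4\eta}$. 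Concretely I expect: $f(\x^{\K})-f(\x^{0})\le -\frac{7B^2}{8\eta}+(\text{a term of order }LB^2\text{ or smaller from }f(\y^{\K-1})-f(\x^0))\le -\frac{B^2}{4\eta}$, where the slack $\frac{7}{8}-\frac14=\frac58$ of $\frac{B^2}{\eta}$ is enough to swallow the intermediate terms given the parameter choices.

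The main obstacle is controlling $f(\y^{\K-1})-f(\x^{0})$ without access to $\|\nabla f(\x^{0})\|$; the resolution is to reuse the descent-type estimates for the classical AGD iteration restricted to one epoch (the same machinery that underlies the ``small gradient'' case and Corollary \ref{lemma5}), which gives that $f$ along the epoch cannot have increased by more than a controlled multiple of $\frac{B^2}{\eta}$ relative to $f(\x^0)$ — here the averaging/restart design and the ball radius $B$ are precisely calibrated so that this increase is strictly less than $\frac{7B^2}{8\eta}-\frac{B^2}{4\eta}$. Once that bookkeeping is in place, combining it with the one-step descent above yields the claimed bound, and the argument is elementary, using only (\ref{gd_smooth1}), the epoch invariants (\ref{cond1})--(\ref{cond0}), and the stated choices of $\eta$, $\theta$, $B$.
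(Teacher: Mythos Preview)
Your opening step is right: the descent from $\y^{\K-1}$ to $\x^{\K}$ gives $f(\x^{\K})\le f(\y^{\K-1})-\tfrac{7\eta}{8}\|\nabla f(\y^{\K-1})\|^2$. The gap is in the second half. You correctly flag that $\|\nabla f(\x^0)\|$ is uncontrolled, but your proposed fixes do not work as stated. First, the lemma assumes only $\eta\le\tfrac{1}{4L}$ and $\theta\in[0,1]$; the specific choices $B=\sqrt{\epsilon/\rho}$ and $\theta=4(\epsilon\rho\eta^2)^{1/4}$ are \emph{not} available here, so you cannot appeal to them to absorb leftover terms. Second, pointing to ``the machinery that underlies the small-gradient case and Corollary~\ref{lemma5}'' is the wrong direction: that analysis goes through the Hessian-Lipschitz quadratic model $g$, which is neither needed nor helpful for this lemma. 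Third, a crude $\bO(LB^2)$ bound on $f(\y^{\K-1})-f(\x^0)$ is not enough: since $\|\nabla f(\y^{\K-1})\|$ has only a \emph{lower} bound, a Cauchy--Schwarz estimate on $f(\y^{\K-1})-f(\x^{\K-1})$ produces a term linear in $\|\nabla f(\y^{\K-1})\|$ that can swamp the constants (try $G=B/\eta$ and you will find the inequality fails with the constants you need).

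The paper's argument avoids the split $[f(\x^\K)-f(\y^{\K-1})]+[f(\y^{\K-1})-f(\x^0)]$ altogether. It combines the upper smoothness bound at $\y^k$ (your first inequality) with the \emph{lower} smoothness bound $f(\x^k)\ge f(\y^k)+\langle\nabla f(\y^k),\x^k-\y^k\rangle-\tfrac{L}{2}\|\x^k-\y^k\|^2$ to get a single per-step inequality
\[
f(\x^{k+1})-f(\x^k)\le \tfrac{5}{8\eta}\|\x^k-\x^{k-1}\|^2-\tfrac{1}{2\eta}\|\x^{k+1}-\x^k\|^2-\tfrac{3\eta}{8}\|\nabla f(\y^k)\|^2,
\]
valid for every $k$. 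Telescoping from $0$ to $\K-1$, using $\x^0=\x^{-1}$ and $\sum_{k=0}^{\K-2}\|\x^{k+1}-\x^k\|^2\le B^2$ from (\ref{cond2}), collapses everything to $f(\x^{\K})-f(\x^0)\le\tfrac{B^2}{8\eta}-\tfrac{3\eta}{8}\|\nabla f(\y^{\K-1})\|^2\le -\tfrac{B^2}{4\eta}$. No quadratic model, no parameter tuning, and the gradient norm never appears linearly. This per-step inequality is the missing idea in your proposal.
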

\begin{proof}
As the gradient is $L$-Lipschitz, we have
\begin{eqnarray}
\begin{aligned}\label{cont11}
f(\x^{k+1})\leq& f(\y^{k})+\<\nabla f(\y^{k}),\x^{k+1}-\y^{k}\>+\frac{L}{2}\|\x^{k+1}-\y^{k}\|^2\\
=&f(\y^{k})-\eta\|\nabla f(\y^{k})\|^2+\frac{L\eta^2}{2}\|\nabla f(\y^{k})\|^2\\
\leq& f(\y^{k})-\frac{7\eta}{8}\|\nabla f(\y^{k})\|^2,
\end{aligned}
\end{eqnarray}
where we use the AGD iteration on line 4 of Algorithm \ref{AGD1} and $\eta\leq\frac{1}{4L}$. From the $L$-gradient Lipschitz, we also have
\begin{eqnarray}
\begin{aligned}\notag
f(\x^{k})\geq f(\y^{k})+\<\nabla f(\y^{k}),\x^{k}-\y^{k}\>-\frac{L}{2}\|\x^{k}-\y^{k}\|^2.
\end{aligned}
\end{eqnarray}
So we have
\begin{eqnarray}
\begin{aligned}\notag
&f(\x^{k+1})-f(\x^{k})\\
\leq& -\<\nabla f(\y^{k}),\x^{k}-\y^{k}\>+\frac{L}{2}\|\x^{k}-\y^{k}\|^2-\frac{7\eta}{8}\|\nabla f(\y^{k})\|^2\\
=&\frac{1}{\eta}\<\x^{k+1}-\y^{k},\x^{k}-\y^{k}\>+\frac{L}{2}\|\x^{k}-\y^{k}\|^2-\frac{7\eta}{8}\|\nabla f(\y^{k})\|^2\\
=& \frac{1}{2\eta}\left(\|\x^{k+1}-\y^{k}\|^2+\|\x^{k}-\y^{k}\|^2-\|\x^{k+1}-\x^{k}\|^2\right)+\frac{L}{2}\|\x^{k}-\y^{k}\|^2-\frac{7\eta}{8}\|\nabla f(\y^{k})\|^2\\
\overset{a}\leq& \frac{5}{8\eta}\|\x^{k}-\y^{k}\|^2-\frac{1}{2\eta}\|\x^{k+1}-\x^{k}\|^2-\frac{3\eta}{8}\|\nabla f(\y^{k})\|^2\\
\overset{b}\leq& \frac{5}{8\eta}\|\x^{k}-\x^{k-1}\|^2-\frac{1}{2\eta}\|\x^{k+1}-\x^{k}\|^2-\frac{3\eta}{8}\|\nabla f(\y^{k})\|^2,
\end{aligned}
\end{eqnarray}
where we use $L\leq\frac{1}{4\eta}$ in $\overset{a}\leq$ and $\|\x^{k}-\y^{k}\|=(1-\theta)\|\x^{k}-\x^{k-1}\|\leq\|\x^{k}-\x^{k-1}\|$ in $\overset{b}\leq$. Summing over $k=0,\cdots,\K-1$ and using $\x^{0}=\x^{-1}$, we have
\begin{eqnarray}
\begin{aligned}\notag
f(\x^{\K})-f(\x^{0})\leq& \frac{1}{8\eta}\sum_{k=0}^{\K-2}\|\x^{k+1}-\x^{k}\|^2-\frac{3\eta}{8}\sum_{k=0}^{\K-1}\|\nabla f(\y^{k})\|^2
\end{aligned}
\end{eqnarray}
\begin{eqnarray}
\hspace*{3cm}\begin{aligned}\notag
\overset{c}\leq& \frac{B^2}{8\eta}-\frac{3\eta}{8}\|\nabla f(\y^{\K-1})\|^2\overset{d}\leq \frac{B^2}{8\eta}-\frac{3 B^2}{8\eta}= -\frac{B^2}{4\eta},
\end{aligned}
\end{eqnarray}
where we use (\ref{cond2}) in $\overset{c}\leq$ and $\|\nabla f(\y^{\K-1})\|> \frac{B}{\eta}$ in $\overset{d}\leq$.
\end{proof}

\subsubsection{Small Gradient of $\|\nabla f(\y^{\K-1})\|$}\label{subsec2}

If $\|\nabla f(\y^{\K-1})\|\leq \frac{B}{\eta}$, then from the AGD iteration on line 4 and (\ref{cond0}) we have
\begin{eqnarray}
\begin{aligned}\notag
\|\x^{\K}-\x^{0}\|\leq \|\y^{\K-1}-\x^{0}\|+\eta\|\nabla f(\y^{\K-1})\|\leq 3B.
\end{aligned}
\end{eqnarray}
For each epoch, denote $\H=\nabla^2 f(\x^{0})$ to be the Hessian matrix at the starting iterate and $\H=\U\bLambda\U^T$ to be its eigenvalue decomposition with $\U,\bLambda\in\R^{d\times d}$. Let $\blambda_j$ be the $j$th eigenvalue. Define $\widetilde\x=\U^T\x$, $\widetilde\y=\U^T\y$, and $\widetilde\nabla f(\y)=\U^T\nabla f(\y)$. As the Hessian is $\rho$-Lipschitz, we have
\begin{eqnarray}
\begin{aligned}\label{sec-smooth2}
f(\x^{\K})-f(\x^{0})\leq&\<\nabla f(\x^{0}),\x^{\K}-\x^{0}\>+\frac{1}{2}(\x^{\K}-\x^{0})^T\H(\x^{\K}-\x^{0})+\frac{\rho}{6}\|\x^{\K}-\x^{0}\|^3\\
=&\<\widetilde\nabla f(\x^{0}),\widetilde\x^{\K}-\widetilde\x^{0}\>+\frac{1}{2}(\widetilde\x^{\K}-\widetilde\x^{0})^T\bLambda(\widetilde\x^{\K}-\widetilde\x^{0})+\frac{\rho}{6}\|\x^{\K}-\x^{0}\|^3\\
\leq&g(\widetilde\x^{\K})-g(\widetilde\x^{0})+4.5\rho B^3,
\end{aligned}
\end{eqnarray}
where we denote
\begin{eqnarray}
\begin{aligned}\label{def_g}
&g(\x)=\<\widetilde\nabla f(\x^{0}),\x-\widetilde\x^{0}\>+\frac{1}{2}(\x-\widetilde\x^{0})^T\bLambda(\x-\widetilde\x^{0})=\sum_{j=1}^dg_j(\x_j),\\
&g_j(x)=\<\widetilde\nabla_j f(\x^{0}),x-\widetilde\x_j^{0}\>+\frac{1}{2}\blambda_j(x-\widetilde\x_j^{0})^2.
\end{aligned}
\end{eqnarray}
Denoting
\begin{eqnarray}
\begin{aligned}\notag
\widetilde\bdelta_j^{k}=\widetilde\nabla_j f(\y^{k})-\nabla g_j(\widetilde\y_j^{k}),\qquad\widetilde\bdelta^{k}=\widetilde\nabla f(\y^{k})-\nabla g(\widetilde\y^{k}),
\end{aligned}
\end{eqnarray}
then the AGD iterations in Algorithm \ref{AGD1} can be rewritten as
\begin{subequations}
\begin{align}
&\widetilde\y_j^{k}=\widetilde\x_j^{k}+(1-\theta)(\widetilde\x_j^{k}-\widetilde\x_j^{k-1}),\label{s1}\\
&\widetilde\x_j^{k+1}=\widetilde\y_j^{k}-\eta\widetilde\nabla_j f(\y^{k})=\widetilde\y_j^{k}-\eta\nabla g_j(\widetilde\y_j^{k})-\eta\widetilde\bdelta_j^{k},\label{s2}
\end{align}
\end{subequations}
and $\|\widetilde\bdelta^{k}\|$ can be bounded as
\begin{eqnarray}
\begin{aligned}\label{sec-smooth}
\|\widetilde\bdelta^{k}\|=&\|\widetilde\nabla f(\y^{k})-\widetilde\nabla f(\x^{0})-\bLambda(\widetilde\y^{k}-\widetilde\x^{0})\|\\
=&\|\nabla f(\y^{k})-\nabla f(\x^{0})-\H(\y^{k}-\x^{0})\|\\
=&\left\|\left(\int_0^1\nabla^2 f(\x^{0}+t(\y^{k}-\x^{0}))-\H\right)(\y^{k}-\x^{0})dt\right\|\\
\leq&\frac{\rho}{2}\|\y^{k}-\x^{0}\|^2\leq 2\rho B^2
\end{aligned}
\end{eqnarray}
for any $k<\K$, where we use the $\rho$-Lipschitz Hessian assumption and (\ref{cond0}) in the last two inequalities, respectively.

Thanks to (\ref{sec-smooth2}), to prove the decrease from $f(\x^{0})$ to $f(\x^{\K})$, we only need to study the decrease of $g(\x)$. Iterations (\ref{s1}) and (\ref{s2}) can be regarded as applying AGD to the quadratic approximation $g(\x)$ coordinately with the approximation error $\widetilde\bdelta^{k}$, where the later can be controlled within $\bO(\rho B^2)$. The quadratic approximation $g(\x)$ equals to the sum of $d$ scalar functions $g_j(\x_j)$. We decompose $g(\x)$ into $\sum_{j\in\S_1} g_j(\x_j)$ and $\sum_{j\in\S_2} g_j(\x_j)$, where
\begin{equation}
\S_1=\left\{j:\blambda_j\geq-\frac{\theta}{\eta}\right\}\quad\mbox{and}\quad \S_2=\left\{j:\blambda_j<-\frac{\theta}{\eta}\right\}.\notag
\end{equation}
We see that $g_j(x)$ is approximate convex when $j\in\S_1$, and strongly concave when $j\in\S_2$. We will prove the approximate decrease of $g_j(\x_j)$ in the above two cases. We first consider $\sum_{j\in\S_1} g_j(\x_j)$ in the following lemma.
\begin{lemma}\label{lemma2}
Suppose that Assumption \ref{assump} holds. Let $\eta\leq\frac{1}{4L}$ and $0<\theta\leq 1$. In each epoch of Algorithm \ref{AGD1} where the ``if condition" triggers, when $\|\nabla f(\y^{\K-1})\|\leq \frac{B}{\eta}$, we have
\begin{eqnarray}
\begin{aligned}\label{cont2}
\sum_{j\in\S_1}g_j(\widetilde\x_j^{\K})-\sum_{j\in\S_1}g_j(\widetilde\x_j^{0})\leq -\sum_{j\in\S_1}\frac{3\theta}{8\eta}\sum_{k=0}^{\K-1}|\widetilde\x_j^{k+1}-\widetilde\x_j^{k}|^2+\frac{8\eta\rho^2B^4\K}{\theta}.
\end{aligned}
\end{eqnarray}
\end{lemma}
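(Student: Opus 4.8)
The plan is to treat the coordinate-wise iterations (\ref{s1})--(\ref{s2}), restricted to a fixed $j\in\S_1$, as perturbed AGD on the scalar quadratic $g_j$, and to run a potential-function argument that telescopes exactly over one epoch because every epoch begins from rest, $\widetilde\x_j^{0}=\widetilde\x_j^{-1}$. Concretely, for an appropriately chosen coefficient $c_1$ I would work with
$$\Psi_j^k=g_j(\widetilde\x_j^k)+\frac{c_1}{2\eta}|\widetilde\x_j^k-\widetilde\x_j^{k-1}|^2,$$
so that $\Psi_j^{0}=g_j(\widetilde\x_j^{0})$ and $\Psi_j^{\K}\ge g_j(\widetilde\x_j^{\K})$, and prove a one-step estimate of the form
$$\Psi_j^{k+1}-\Psi_j^k\le-\frac{3\theta}{8\eta}|\widetilde\x_j^{k+1}-\widetilde\x_j^k|^2+\frac{c_2\eta}{\theta}|\widetilde\bdelta_j^k|^2,\qquad 0\le k<\K,\ j\in\S_1.$$
Summing over $k=0,\dots,\K-1$, then over $j\in\S_1$, and using $\sum_{j}|\widetilde\bdelta_j^k|^2=\|\widetilde\bdelta^k\|^2\le(2\rho B^2)^2$ from (\ref{sec-smooth}) together with $\K\le K$ then gives exactly (\ref{cont2}); note that the negative term is deliberately kept, as it is later combined with (\ref{cond1}) and with the bound for the indices in $\S_2$.

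For the one-step estimate the main computation is exact, since $g_j$ is quadratic. Writing $p=\widetilde\x_j^{k+1}-\widetilde\x_j^k$ and $q=\widetilde\x_j^k-\widetilde\x_j^{k-1}$, the relations $\widetilde\x_j^k-\widetilde\y_j^k=-(1-\theta)q$ and $\widetilde\x_j^{k+1}-\widetilde\y_j^k=p-(1-\theta)q=-\eta\nabla g_j(\widetilde\y_j^k)-\eta\widetilde\bdelta_j^k$, together with a Taylor expansion of $g_j$ about $\widetilde\y_j^k$, yield
$$g_j(\widetilde\x_j^{k+1})-g_j(\widetilde\x_j^k)=-\Big(\frac1\eta-\frac{\blambda_j}{2}\Big)p^2+(1-\theta)\Big(\frac1\eta-\blambda_j\Big)pq-\widetilde\bdelta_j^k p.$$
Adding $\tfrac{c_1}{2\eta}(p^2-q^2)$ and bounding the momentum cross term $(1-\theta)(\tfrac1\eta-\blambda_j)pq$ by a weighted Young inequality (i.e. completing the square in $(p,q)$) reduces the quadratic-in-$(p,q)$ part to a negative semidefinite form, provided a discriminant condition holds; I would verify this condition uniformly over $\blambda_j\in[-\theta/\eta,\tfrac1{4\eta}]$, using $\blambda_j\ge-\theta/\eta$ (definition of $\S_1$) and $|\blambda_j|\le L\le\tfrac1{4\eta}$ (from $\eta\le\tfrac1{4L}$), for the chosen $c_1$. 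The remaining negative multiple of $p^2$ is then split as $\tfrac{3\theta}{8\eta}p^2$ plus a reserve of order $\tfrac\theta\eta\,p^2$, and the latter absorbs the perturbation cross term $-\widetilde\bdelta_j^k p$ via Young's inequality, producing the $\tfrac{c_2\eta}{\theta}|\widetilde\bdelta_j^k|^2$ error and the loss of a $1/\theta$ factor there.

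The main obstacle is entirely in the constants: the weight $c_1$, the target coefficient $\tfrac{3\theta}{8\eta}$, and the reserve used against the perturbation must all be compatible with a single discriminant inequality that has to hold both in the almost-flat regime $\blambda_j\approx0$ (where the momentum cross term is largest relative to the curvature) and at the strongly convex end $\blambda_j\approx\tfrac1{4\eta}$; tuning $c_1$ — possibly letting it depend mildly on $\theta$ — to cover both extremes is the delicate point. Everything else — the Taylor identity, the telescoping, and the final summation against the bound on $\|\widetilde\bdelta^k\|$ — is routine bookkeeping. I emphasize that this argument never uses convergence rates of (strongly) convex AGD: it relies only on the exact quadratic structure of $g_j$, the smallness of $\eta$, and the fact that the perturbation and the displacement stay controlled within one epoch.
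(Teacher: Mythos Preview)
Your proposal is correct and follows essentially the same route as the paper. The paper uses the potential $\Psi_j^k=g_j(\widetilde\x_j^k)+\frac{(1+\theta)(1-\theta)^2}{2\eta}|\widetilde\x_j^k-\widetilde\x_j^{k-1}|^2$ (so your $c_1=(1+\theta)(1-\theta)^2$, indeed mildly $\theta$-dependent), and instead of verifying a discriminant condition it handles the quadratic in $(p,q)$ via three-point identities and then drops the manifestly nonpositive term $(-\tfrac{1}{2\eta}+\tfrac{\blambda_j}{2})|\widetilde\x_j^{k+1}-\widetilde\y_j^k|^2$; the perturbation is absorbed by Young with $\alpha=\tfrac{\theta}{4\eta}$, giving your $c_2=2$.
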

\begin{proof}
Since $g_j(x)$ is quadratic, we have
\begin{eqnarray}
\begin{aligned}\notag
g_j(\widetilde\x_j^{k+1})=& g_j(\widetilde\x_j^{k})+\<\nabla g_j(\widetilde\x_j^{k}),\widetilde\x_j^{k+1}-\widetilde\x_j^{k}\>+\frac{\blambda_j}{2}|\widetilde\x_j^{k+1}-\widetilde\x_j^{k}|^2\\
\overset{a}=& g_j(\widetilde\x_j^{k})-\frac{1}{\eta}\<\widetilde\x_j^{k+1}-\widetilde\y_j^{k}+\eta\widetilde\bdelta_j^{k},\widetilde\x_j^{k+1}-\widetilde\x_j^{k}\>\\
&+\<\nabla g_j(\widetilde\x_j^{k})-\nabla g_j(\widetilde\y_j^{k}),\widetilde\x_j^{k+1}-\widetilde\x_j^{k}\>+\frac{\blambda_j}{2}|\widetilde\x_j^{k+1}-\widetilde\x_j^{k}|^2\\
=& g_j(\widetilde\x_j^{k})-\frac{1}{\eta}\<\widetilde\x_j^{k+1}-\widetilde\y_j^{k},\widetilde\x_j^{k+1}-\widetilde\x_j^{k}\>-\<\widetilde\bdelta_j^{k},\widetilde\x_j^{k+1}-\widetilde\x_j^{k}\>\\
&+\blambda_j\<\widetilde\x_j^{k}-\widetilde\y_j^{k},\widetilde\x_j^{k+1}-\widetilde\x_j^{k}\>+\frac{\blambda_j}{2}|\widetilde\x_j^{k+1}-\widetilde\x_j^{k}|^2\\
=& g_j(\widetilde\x_j^{k})+\frac{1}{2\eta}\left(|\widetilde\x_j^{k}-\widetilde\y_j^{k}|^2-|\widetilde\x_j^{k+1}-\widetilde\y_j^{k}|^2-|\widetilde\x_j^{k+1}-\widetilde\x_j^{k}|^2\right)\\
&-\<\widetilde\bdelta_j^{k},\widetilde\x_j^{k+1}-\widetilde\x_j^{k}\>+\frac{\blambda_j}{2}\left(|\widetilde\x_j^{k+1}-\widetilde\y_j^{k}|^2-|\widetilde\x_j^{k}-\widetilde\y_j^{k}|^2\right)\\
\leq& g_j(\widetilde\x_j^{k})+\frac{1}{2\eta}\left(|\widetilde\x_j^{k}-\widetilde\y_j^{k}|^2-|\widetilde\x_j^{k+1}-\widetilde\y_j^{k}|^2-|\widetilde\x_j^{k+1}-\widetilde\x_j^{k}|^2\right)\\
&+\frac{1}{2\alpha}|\widetilde\bdelta_j^{k}|^2+\frac{\alpha}{2}|\widetilde\x_j^{k+1}-\widetilde\x_j^{k}|^2+\frac{\blambda_j}{2}\left(|\widetilde\x_j^{k+1}-\widetilde\y_j^{k}|^2-|\widetilde\x_j^{k}-\widetilde\y_j^{k}|^2\right),
\end{aligned}
\end{eqnarray}
for some positive constant $\alpha$ to be specified later, where we use (\ref{s2}) in $\overset{a}=$. Using $L\geq\blambda_j\geq-\frac{\theta}{\eta}$ when $j\in\S_1=\{j:\blambda_j\geq-\frac{\theta}{\eta}\}$ and $\left(-\frac{1}{2\eta}+\frac{\blambda_j}{2}\right)|\widetilde\x_j^{k+1}-\widetilde\y_j^{k}|^2\leq \left(-2L+\frac{L}{2}\right)|\widetilde\x_j^{k+1}-\widetilde\y_j^{k}|^2\leq 0$, we have for each $j\in\S_1$,
\begin{eqnarray}
\begin{aligned}\notag
g_j(\widetilde\x_j^{k+1})\leq& g_j(\widetilde\x_j^{k})+\frac{1}{2\eta}\left(|\widetilde\x_j^{k}-\widetilde\y_j^{k}|^2-|\widetilde\x_j^{k+1}-\widetilde\x_j^{k}|^2\right)+\frac{1}{2\alpha}|\widetilde\bdelta_j^{k}|^2+\frac{\alpha}{2}|\widetilde\x_j^{k+1}-\widetilde\x_j^{k}|^2+\frac{\theta}{2\eta}|\widetilde\x_j^{k}-\widetilde\y_j^{k}|^2\\
\overset{b}=& g_j(\widetilde\x_j^{k})+\frac{(1+\theta)(1-\theta)^2}{2\eta}|\widetilde\x_j^{k}-\widetilde\x_j^{k-1}|^2-\left(\frac{1}{2\eta}-\frac{\alpha}{2}\right)|\widetilde\x_j^{k+1}-\widetilde\x_j^{k}|^2+\frac{1}{2\alpha}|\widetilde\bdelta_j^{k}|^2,
\end{aligned}
\end{eqnarray}
where we use (\ref{s1}) in $\overset{b}=$. Defining the potential function
\begin{eqnarray}
\begin{aligned}\notag
\bell_j^{k+1}=g_j(\widetilde\x_j^{k+1})+\frac{(1+\theta)(1-\theta)^2}{2\eta}|\widetilde\x_j^{k+1}-\widetilde\x_j^{k}|^2,
\end{aligned}
\end{eqnarray}
we have
\begin{eqnarray}
\begin{aligned}\notag
\bell_j^{k+1}\leq& \bell_j^k-\left(\frac{1}{2\eta}-\frac{\alpha}{2}-\frac{(1+\theta)(1-\theta)^2}{2\eta}\right)|\widetilde\x_j^{k+1}-\widetilde\x_j^{k}|^2+\frac{1}{2\alpha}|\widetilde\bdelta_j^{k}|^2\\
\overset{c}\leq& \bell_j^k-\frac{3\theta}{8\eta}|\widetilde\x_j^{k+1}-\widetilde\x_j^{k}|^2+\frac{2\eta}{\theta}|\widetilde\bdelta_j^{k}|^2,
\end{aligned}
\end{eqnarray}
where we let $\alpha=\frac{\theta}{4\eta}$ in $\overset{c}\leq$ such that $\frac{1}{2\eta}-\frac{\theta}{8\eta}-\frac{(1+\theta)(1-\theta)^2}{2\eta}=\frac{3\theta}{8\eta}+\frac{\theta^2}{2\eta}-\frac{\theta^3}{2\eta}\geq\frac{3\theta}{8\eta} $. Summing over $k=0,1,\cdots,\K-1$ and $j\in\S_1$, using $\x^{0}-\x^{-1}=0$, we have
\begin{eqnarray}
\begin{aligned}\notag
\sum_{j\in\S_1}g_j(\widetilde\x_j^{\K})\leq\sum_{j\in\S_1}\bell_j^{\K}\leq& \sum_{j\in\S_1}g_j(\widetilde\x_j^{0})-\sum_{j\in\S_1}\frac{3\theta}{8\eta}\sum_{k=0}^{\K-1}|\widetilde\x_j^{k+1}-\widetilde\x_j^{k}|^2+\frac{2\eta}{\theta}\sum_{k=0}^{\K-1}\|\widetilde\bdelta^{k}\|^2\\
\overset{d}\leq& \sum_{j\in\S_1}g_j(\widetilde\x_j^{0})-\sum_{j\in\S_1}\frac{3\theta}{8\eta}\sum_{k=0}^{\K-1}|\widetilde\x_j^{k+1}-\widetilde\x_j^{k}|^2+\frac{8\eta\rho^2B^4\K}{\theta},
\end{aligned}
\end{eqnarray}
where we use (\ref{sec-smooth}) in $\overset{d}\leq$.
\end{proof}

Next, we consider $\sum_{j\in\S_2} g_j(\x_j)$.
\begin{lemma}\label{lemma3}
Suppose that Assumption \ref{assump} holds. Let $\eta\leq\frac{1}{4L}$ and $0<\theta\leq 1$. In each epoch of Algorithm \ref{AGD1} where the ``if condition" triggers, when $\|\nabla f(\y^{\K-1})\|\leq \frac{B}{\eta}$, we have
\begin{eqnarray}
\begin{aligned}\label{cont3}
\sum_{j\in\S_2}g_j(\widetilde\x_j^{\K})-\sum_{j\in\S_2}g_j(\widetilde\x_j^{0})\leq -\sum_{j\in\S_2}\frac{\theta}{2\eta}\sum_{k=0}^{\K-1}|\widetilde\x_j^{k+1}-\widetilde\x_j^{k}|^2+\frac{2\eta\rho^2B^4\K}{\theta}.
\end{aligned}
\end{eqnarray}
\end{lemma}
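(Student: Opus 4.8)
The plan is to follow the same template as Lemma~\ref{lemma2}: expand the quadratic $g_j$ exactly along the AGD recursion~(\ref{s1})--(\ref{s2}), sum over $k=0,\dots,\K-1$, and read off a decrease proportional to $\sum_{k}|\widetilde\x_j^{k+1}-\widetilde\x_j^k|^2$ with an error driven by $\|\widetilde\bdelta^k\|$. The genuinely new ingredient is that for $j\in\S_2$ the scalar function $g_j$ is $\tfrac{\theta}{\eta}$-strongly \emph{concave}. Writing $\mu_j=-\blambda_j>\tfrac{\theta}{\eta}$, $x_j^\star=\argmax g_j=\widetilde\x_j^0-\widetilde\nabla_j f(\x^0)/\blambda_j$, and $w^k=\widetilde\x_j^k-x_j^\star$, we have $g_j(\widetilde\x_j^k)=g_j^\star-\tfrac{\mu_j}{2}(w^k)^2$, the iteration becomes
\[
w^{k+1}=(1+\eta\mu_j)\big(w^k+(1-\theta)(w^k-w^{k-1})\big)-\eta\,\widetilde\bdelta_j^k,\qquad w^{-1}=w^0,
\]
and, since $\y^0=\x^0$, one has $\widetilde\bdelta_j^0=0$ and $\widetilde\x_j^1-\widetilde\x_j^0=-\eta\widetilde\nabla_j f(\x^0)=\eta\mu_j w^0$. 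Hence $g_j(\widetilde\x_j^{\K})-g_j(\widetilde\x_j^0)=\tfrac{\mu_j}{2}\big((w^0)^2-(w^{\K})^2\big)$, and because $\mu_j>\tfrac{\theta}{\eta}$ it suffices to show
\[
(w^{\K})^2-(w^0)^2\ \ge\ \sum_{k=0}^{\K-1}|w^{k+1}-w^k|^2-\tfrac{2\eta}{\theta}\,\mathrm{err}_j,\qquad \sum\nolimits_{j\in\S_2}\mathrm{err}_j\le\tfrac{2\eta\rho^2B^4\K}{\theta}.
\]

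The first step is the exact telescoping identity $(w^{\K})^2-(w^0)^2=\sum_{k=0}^{\K-1}|w^{k+1}-w^k|^2+2\sum_{k=0}^{\K-1}w^k(w^{k+1}-w^k)$, which reduces the claim to lower-bounding $\Sigma:=\sum_{k=0}^{\K-1}w^k(w^{k+1}-w^k)$. The second step feeds the recursion back in, $w^{k+1}-w^k=\eta\mu_j w^k+(1+\eta\mu_j)(1-\theta)(w^k-w^{k-1})-\eta\widetilde\bdelta_j^k$, turning $\Sigma$ into $\eta\mu_j\sum_k(w^k)^2+(1+\eta\mu_j)(1-\theta)\sum_{k\ge1}w^k(w^k-w^{k-1})-\eta\sum_k w^k\widetilde\bdelta_j^k$. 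The diagonal part $\eta\mu_j\sum_k(w^k)^2\ge\theta\sum_k(w^k)^2$ enters with a favorable sign (this is exactly where expansiveness, $1+\eta\mu_j>1$, is used); it absorbs $-\eta\sum_k w^k\widetilde\bdelta_j^k$ by Young's inequality at scale $\theta$ while leaving an error $\le\tfrac{\eta^2}{2\theta}\sum_k|\widetilde\bdelta_j^k|^2$; and the remaining term $\sum_{k\ge1}w^k(w^k-w^{k-1})=\tfrac12\big((w^{\K-1})^2-(w^0)^2\big)+\tfrac12\sum_{k=0}^{\K-2}|w^{k+1}-w^k|^2$ is nonnegative once $(w^{\K-1})^2\ge(w^0)^2$. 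Summing over $j\in\S_2$, bounding $\sum_k\|\widetilde\bdelta^k\|^2\le4\rho^2B^4\K$ by (\ref{sec-smooth}), and using $\theta\le1$ then delivers (\ref{cont3}).

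I expect the main obstacle to be the expansiveness estimate $(w^{\K-1})^2\ge(w^0)^2$ (equivalently, controlling the sign of $\Sigma$): for the perturbation-free recursion this is immediate — $w^k$ keeps the sign of $w^0$ and $|w^k|$ is strictly increasing — but it must be made robust to $\widetilde\bdelta_j^k$, whose size $\le2\rho B^2$ is only comparable to $|\widetilde\x_j^1-\widetilde\x_j^0|=\eta\mu_j|w^0|$ when $|\widetilde\nabla_j f(\x^0)|$ is not too small; in the complementary regime the curvature term $-\tfrac{\mu_j}{2}(\widetilde\x_j^{\K}-\widetilde\x_j^0)^2\le-\tfrac{\theta}{2\eta}(\widetilde\x_j^{\K}-\widetilde\x_j^0)^2$ already dominates and one argues directly using $\|\widetilde\x_j^{\K}-\widetilde\x_j^0\|\le 3B$ and $\sum_k|w^{k+1}-w^k|^2\le(\widetilde\x_j^{\K}-\widetilde\x_j^0)^2+(\text{error})$. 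Merging the two regimes and squeezing the constants to exactly $\tfrac{\theta}{2\eta}$ and $\tfrac{2\eta\rho^2B^4\K}{\theta}$ — which is what the subsequent combination with Lemma~\ref{lemma2} and (\ref{sec-smooth2}), together with the restart bound $\sum_k\|\x^{k+1}-\x^k\|^2>B^2/\K$, needs to yield the net per-epoch decrease $-\tfrac{7}{8}\epsilon^{3/2}\rho^{-1/2}$ — is the delicate part. (An alternative avoiding the maximizer shift is to \emph{retain} the now strongly-negative term $\big(-\tfrac{1}{2\eta}+\tfrac{\blambda_j}{2}\big)|\widetilde\y_j^k-\widetilde\x_j^{k+1}|^2$ instead of discarding it as in Lemma~\ref{lemma2}, using it to offset the momentum drift $\big(\tfrac{1}{2\eta}+\tfrac{\mu_j}{2}\big)(1-\theta)^2|\widetilde\x_j^k-\widetilde\x_j^{k-1}|^2$; but a single-step potential $g_j+C|\widetilde\x_j^{k+1}-\widetilde\x_j^k|^2$ cannot close this, since the drift coefficient is $\Theta(1/\eta)$, larger than any admissible $C$, which is why the global summation above seems unavoidable.)
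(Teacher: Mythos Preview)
Your proposal has a genuine gap. You correctly reduce the claim to a lower bound on $\Sigma=\sum_{k=0}^{\K-1}w^k(w^{k+1}-w^k)$ and decompose it using the recursion, but the piece $(1+\eta\mu_j)(1-\theta)\sum_{k\ge1}w^k(w^k-w^{k-1})$ is only shown to be nonnegative under the hypothesis $(w^{\K-1})^2\ge(w^0)^2$, which you do not establish. Your proposed fallback in the ``small $|\widetilde\nabla_j f(\x^0)|$'' regime is vague, and the inequality $\sum_k|w^{k+1}-w^k|^2\le(\widetilde\x_j^{\K}-\widetilde\x_j^0)^2+(\text{error})$ you invoke there goes the wrong way (Cauchy--Schwarz gives the reverse). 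So as written the argument is incomplete, and ``squeezing the constants'' is not the only issue---the sign of the momentum cross-sum is genuinely uncontrolled.

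The paper sidesteps this difficulty entirely by bounding the cross term \emph{termwise} rather than in aggregate. With $a_k:=\blambda_j\langle\widetilde\x_j^{k+1}-\widetilde\x_j^k,\widetilde\x_j^k-\v_j\rangle$ (equivalently $-\mu_j w^k(w^{k+1}-w^k)$), one expands $a_k$ via the recursion, applies Young's inequality to absorb the $\eta\blambda_j^2$ terms into one another, and obtains the contractive one-step inequality
\[
a_k\ \le\ (1-\theta)\,a_{k-1}+\tfrac{\eta}{2}|\widetilde\bdelta_j^k|^2 .
\]
Crucially, because $\y^0=\x^0$ one has $\widetilde\x_j^1-\widetilde\x_j^0=-\eta\blambda_j(\widetilde\x_j^0-\v_j)$, so the base term $a_0=-\eta\blambda_j^2|\widetilde\x_j^0-\v_j|^2\le 0$. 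Unrolling gives $a_k\le\tfrac{\eta}{2}\sum_{t=1}^k(1-\theta)^{k-t}|\widetilde\bdelta_j^t|^2$, and summing over $k$ and $j$ with $\sum_t(1-\theta)^{k-t}\le\theta^{-1}$ and (\ref{sec-smooth}) yields exactly the stated constant $\tfrac{2\eta\rho^2B^4\K}{\theta}$. No monotonicity of $|w^k|$ and no regime split are needed; the whole argument is a geometric series driven by the favorable sign of $a_0$. This is the idea your sketch is missing.
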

\begin{proof}
Denoting $\v_j=\widetilde\x_j^{0}-\frac{1}{\blambda_j}\widetilde\nabla_j f(\x^{0})$, $g_j(x)$ can be rewritten as
\begin{eqnarray}
\begin{aligned}\notag
g_j(x)=&\frac{\blambda_j}{2}\left(x-\widetilde\x_j^{0}+\frac{1}{\blambda_j}\widetilde\nabla_j f(\x^{0})\right)^2-\frac{1}{2\blambda_j}|\widetilde\nabla_j f(\x^{0})|^2\\
=&\frac{\blambda_j}{2}|x-\v_j|^2-\frac{1}{2\blambda_j}|\widetilde\nabla_j f(\x^{0})|^2.
\end{aligned}
\end{eqnarray}
For each $j\in\S_2=\{j:\blambda_j<-\frac{\theta}{\eta}\}$, we have
\begin{eqnarray}
\begin{aligned}\label{cont1}
g_j(\widetilde\x_j^{k+1})-g_j(\widetilde\x_j^{k})=&\frac{\blambda_j}{2}|\widetilde\x_j^{k+1}-\v_j|^2-\frac{\blambda_j}{2}|\widetilde\x_j^{k}-\v_j|^2\\
=&\frac{\blambda_j}{2}|\widetilde\x_j^{k+1}-\widetilde\x_j^{k}|^2+\blambda_j\<\widetilde\x_j^{k+1}-\widetilde\x_j^{k},\widetilde\x_j^{k}-\v_j\>\\
\leq&-\frac{\theta}{2\eta}|\widetilde\x_j^{k+1}-\widetilde\x_j^{k}|^2+\blambda_j\<\widetilde\x_j^{k+1}-\widetilde\x_j^{k},\widetilde\x_j^{k}-\v_j\>.
\end{aligned}
\end{eqnarray}
So we only need to bound the second term. From (\ref{s2}) and (\ref{s1}), we have
\begin{eqnarray}
\begin{aligned}\notag
\widetilde\x_j^{k+1}-\widetilde\x_j^{k}=&\widetilde\y_j^{k}-\widetilde\x_j^{k}-\eta\nabla g_j(\widetilde\y_j^{k})-\eta\widetilde\bdelta_j^{k}\\
=&(1-\theta)(\widetilde\x_j^{k}-\widetilde\x_j^{k-1})-\eta\nabla g_j(\widetilde\y_j^{k})-\eta\widetilde\bdelta_j^{k}\\
=&(1-\theta)(\widetilde\x_j^{k}-\widetilde\x_j^{k-1})-\eta\blambda_j(\widetilde\y_j^{k}-\v_j)-\eta\widetilde\bdelta_j^{k}\\
=&(1-\theta)(\widetilde\x_j^{k}-\widetilde\x_j^{k-1})-\eta\blambda_j(\widetilde\x_j^{k}-\v_j+(1-\theta)(\widetilde\x_j^{k}-\widetilde\x_j^{k-1}))-\eta\widetilde\bdelta_j^{k}.
\end{aligned}
\end{eqnarray}
So for each $j\in\S_2$, we have
\begin{eqnarray}
\begin{aligned}\notag
&\blambda_j\<\widetilde\x_j^{k+1}-\widetilde\x_j^{k},\widetilde\x_j^{k}-\v_j\>\\
=&(1-\theta)\blambda_j\<\widetilde\x_j^{k}-\widetilde\x_j^{k-1},\widetilde\x_j^{k}-\v_j\>-\eta\blambda_j^2|\widetilde\x_j^{k}-\v_j|^2-\eta\blambda_j^2(1-\theta)\<\widetilde\x_j^{k}-\widetilde\x_j^{k-1},\widetilde\x_j^{k}-\v_j\>-\eta\blambda_j\<\widetilde\bdelta_j^{k},\widetilde\x_j^{k}-\v_j\>\\
\leq&(1-\theta)\blambda_j\<\widetilde\x_j^{k}-\widetilde\x_j^{k-1},\widetilde\x_j^{k}-\v_j\>-\eta\blambda_j^2|\widetilde\x_j^{k}-\v_j|^2\\
&+\frac{\eta\blambda_j^2(1-\theta)}{2}\left(|\widetilde\x_j^{k}-\widetilde\x_j^{k-1}|^2+|\widetilde\x_j^{k}-\v_j|^2\right)+\frac{\eta}{2(1+\theta)}|\widetilde\bdelta_j^{k}|^2+\frac{\eta\blambda_j^2(1+\theta)}{2}|\widetilde\x_j^{k}-\v_j|^2\\
=&(1-\theta)\blambda_j\<\widetilde\x_j^{k}-\widetilde\x_j^{k-1},\widetilde\x_j^{k}-\v_j\>+\frac{\eta\blambda_j^2(1-\theta)}{2}|\widetilde\x_j^{k}-\widetilde\x_j^{k-1}|^2+\frac{\eta}{2(1+\theta)}|\widetilde\bdelta_j^{k}|^2\\
=&(1-\theta)\blambda_j\<\widetilde\x_j^{k}-\widetilde\x_j^{k-1},\widetilde\x_j^{k-1}-\v_j\>+(1-\theta)\blambda_j|\widetilde\x_j^{k}-\widetilde\x_j^{k-1}|^2+\frac{\eta\blambda_j^2(1-\theta)}{2}|\widetilde\x_j^{k}-\widetilde\x_j^{k-1}|^2+\frac{\eta}{2(1+\theta)}|\widetilde\bdelta_j^{k}|^2\\
\overset{a}\leq&(1-\theta)\blambda_j\<\widetilde\x_j^{k}-\widetilde\x_j^{k-1},\widetilde\x_j^{k-1}-\v_j\>+\frac{\eta}{2}|\widetilde\bdelta_j^{k}|^2,
\end{aligned}
\end{eqnarray}
where we use $\left(1+\frac{\eta\blambda_j}{2}\right)(1-\theta)\geq \left(1-\frac{\eta L}{2}\right)(1-\theta)\geq 0$ and $\blambda_j<0$ when $j\in\S_2$ in $\overset{a}\geq$. So we have
\begin{eqnarray}
\begin{aligned}\notag
\blambda_j\<\widetilde\x_j^{k+1}-\widetilde\x_j^{k},\widetilde\x_j^{k}-\v_j\>\leq&(1-\theta)^k\blambda_j\<\widetilde\x_j^{1}-\widetilde\x_j^{0},\widetilde\x_j^{0}-\v_j\>+\frac{\eta}{2}\sum_{t=1}^k(1-\theta)^{k-t}|\widetilde\bdelta_j^{t}|^2\\
\overset{b}=&-(1-\theta)^k\eta\blambda_j^2|\widetilde\x_j^{0}-\v_j|^2+\frac{\eta}{2}\sum_{t=1}^k(1-\theta)^{k-t}|\widetilde\bdelta_j^{t}|^2\\
\leq&\frac{\eta}{2}\sum_{t=1}^k(1-\theta)^{k-t}|\widetilde\bdelta_j^{t}|^2,
\end{aligned}
\end{eqnarray}
where we use
\begin{eqnarray}
\begin{aligned}\notag
\widetilde\x_j^{1}-\widetilde\x_j^{0}=&\widetilde\x_j^{1}-\widetilde\y_j^{0}=-\eta\widetilde\nabla_j f(\y^{0})=-\eta\widetilde\nabla_j f(\x^{0})\\
=&-\eta\nabla g_j(\widetilde\x_j^{0})=-\eta\blambda_j(\widetilde\x_j^{0}-\v_j)
\end{aligned}
\end{eqnarray}
in $\overset{b}=$. Plugging into (\ref{cont1}), we have
\begin{eqnarray}
\begin{aligned}\notag
&g_j(\widetilde\x_j^{k+1})-g_j(\widetilde\x_j^{k})\leq-\frac{\theta}{2\eta}|\widetilde\x_j^{k+1}-\widetilde\x_j^{k}|^2+\frac{\eta}{2}\sum_{t=1}^k(1-\theta)^{k-t}|\widetilde\bdelta_j^{t}|^2.
\end{aligned}
\end{eqnarray}
Summing over $k=0,1,\cdots,\K-1$ and $j\in\S_2$, we have
\begin{eqnarray}
\begin{aligned}\notag
\sum_{j\in\S_2}g_j(\widetilde\x_j^{\K})-\sum_{j\in\S_2}g_j(\widetilde\x_j^{0})\leq&-\sum_{j\in\S_2}\frac{\theta}{2\eta}\sum_{k=0}^{\K-1}|\widetilde\x_j^{k+1}-\widetilde\x_j^{k}|^2+\frac{\eta}{2}\sum_{k=0}^{\K-1}\sum_{t=1}^k(1-\theta)^{k-t}\|\widetilde\bdelta^{t}\|^2\\
\overset{c}\leq& -\sum_{j\in\S_2}\frac{\theta}{2\eta}\sum_{k=0}^{\K-1}|\widetilde\x_j^{k+1}-\widetilde\x_j^{k}|^2+2\eta\rho^2B^4\sum_{k=0}^{\K-1}\sum_{t=1}^k(1-\theta)^{k-t}\\
\leq& -\sum_{j\in\S_2}\frac{\theta}{2\eta}\sum_{k=0}^{\K-1}|\widetilde\x_j^{k+1}-\widetilde\x_j^{k}|^2+\frac{2\eta\rho^2B^4\K}{\theta},
\end{aligned}
\end{eqnarray}
where we use (\ref{sec-smooth}) in $\overset{c}\leq$.
\end{proof}
Putting Lemmas \ref{lemma2} and \ref{lemma3} together, we have the following lemma.
\begin{lemma}\label{lemma4}
Suppose that Assumption \ref{assump} holds. Let $\eta\leq\frac{1}{4L}$ and $0<\theta\leq 1$. In each epoch of Algorithm \ref{AGD1} where the ``if condition" triggers, when $\|\nabla f(\y^{\K-1})\|\leq \frac{B}{\eta}$, we have
\begin{eqnarray}
\begin{aligned}\label{cont9}
f(\x^{\K})-f(\x^{0})\leq -\frac{3\theta B^2}{8\eta K}+\frac{10\eta\rho^2B^4K}{\theta}+4.5\rho B^3.
\end{aligned}
\end{eqnarray}
\end{lemma}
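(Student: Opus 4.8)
The plan is to add the two estimates from Lemmas \ref{lemma2} and \ref{lemma3} and to feed the result into the second-order smoothness bound (\ref{sec-smooth2}). Recall that (\ref{sec-smooth2}) already reduces the decrease of $f$ to the decrease of the quadratic surrogate, namely $f(\x^{\K})-f(\x^{0})\leq g(\widetilde\x^{\K})-g(\widetilde\x^{0})+4.5\rho B^3$, and that $g=\sum_{j\in\S_1}g_j+\sum_{j\in\S_2}g_j$. First I would substitute (\ref{cont2}) and (\ref{cont3}) into this splitting. The coefficients in front of the per-coordinate displacement sums are $\frac{3\theta}{8\eta}$ for $j\in\S_1$ and $\frac{\theta}{2\eta}$ for $j\in\S_2$; since $\frac{3\theta}{8\eta}\leq\frac{\theta}{2\eta}$, I can lower bound both by $\frac{3\theta}{8\eta}$, which lets me merge the two partial sums into a single sum over all $j=1,\dots,d$. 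The two additive error terms $\frac{8\eta\rho^2B^4\K}{\theta}$ and $\frac{2\eta\rho^2B^4\K}{\theta}$ simply add to $\frac{10\eta\rho^2B^4\K}{\theta}$.

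Next I would use that $\U$ is orthogonal, so that $\sum_{j=1}^d|\widetilde\x_j^{k+1}-\widetilde\x_j^{k}|^2=\|\widetilde\x^{k+1}-\widetilde\x^{k}\|^2=\|\x^{k+1}-\x^{k}\|^2$ for every $k$. This converts the merged displacement sum into $\sum_{k=0}^{\K-1}\|\x^{k+1}-\x^{k}\|^2$, which the restart rule controls from below: (\ref{cond1}) gives $\K\sum_{t=0}^{\K-1}\|\x^{t+1}-\x^{t}\|^2>B^2$, hence $\sum_{t=0}^{\K-1}\|\x^{t+1}-\x^{t}\|^2>\frac{B^2}{\K}\geq\frac{B^2}{K}$ because $\K\leq K$. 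Plugging this into the negative displacement term yields $-\frac{3\theta}{8\eta}\sum_{k=0}^{\K-1}\|\x^{k+1}-\x^{k}\|^2\leq-\frac{3\theta B^2}{8\eta K}$, while in the positive error term a further application of $\K\leq K$ gives $\frac{10\eta\rho^2B^4\K}{\theta}\leq\frac{10\eta\rho^2B^4K}{\theta}$. Collecting these together with the $4.5\rho B^3$ term from (\ref{sec-smooth2}) produces exactly (\ref{cont9}).

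I do not expect a genuine obstacle here: the argument is essentially bookkeeping once Lemmas \ref{lemma2} and \ref{lemma3} are available. The two points that require care are keeping the directions of the inequalities straight when $\K\leq K$ is invoked — it is used to \emph{enlarge} the positive $\bO(\eta\rho^2B^4\K/\theta)$ error and, through $\frac{B^2}{\K}\geq\frac{B^2}{K}$, to make the negative term \emph{more} negative — and remembering that the change of variables $\widetilde\x=\U^T\x$ is an isometry, so that the per-coordinate displacements reassemble into $\|\x^{k+1}-\x^{k}\|^2$ without any loss.
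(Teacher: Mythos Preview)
Your proposal is correct and matches the paper's proof exactly: add (\ref{cont2}) and (\ref{cont3}), take the smaller coefficient $\frac{3\theta}{8\eta}$ to merge the coordinate sums, use orthogonality of $\U$ and the restart condition (\ref{cond1}), then plug into (\ref{sec-smooth2}) with $\K\leq K$. (One minor wording slip in your closing remarks: replacing $\K$ by $K$ in $-\frac{3\theta B^2}{8\eta\K}$ makes that term \emph{less} negative, not more --- but the inequality $-\frac{3\theta}{8\eta}\sum_{k}\|\x^{k+1}-\x^{k}\|^2\leq -\frac{3\theta B^2}{8\eta K}$ you actually wrote has the correct direction, so nothing is affected.)
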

\begin{proof}
Summing over (\ref{cont2}) and (\ref{cont3}), we have
\begin{eqnarray}
\begin{aligned}\notag
g(\widetilde\x^{\K})-g(\widetilde\x^{0})=&\sum_{j\in\S_1\cup\S_2}g_j(\widetilde\x_j^{\K})-g_j(\widetilde\x_j^{0})\\
\leq& -\frac{3\theta}{8\eta}\sum_{k=0}^{\K-1}\|\widetilde\x^{k+1}-\widetilde\x^{k}\|^2+\frac{10\eta\rho^2B^4\K}{\theta}\\
=& -\frac{3\theta}{8\eta}\sum_{k=0}^{\K-1}\|\x^{k+1}-\x^{k}\|^2+\frac{10\eta\rho^2B^4\K}{\theta}\\
\overset{a}\leq& -\frac{3\theta B^2}{8\eta\K}+\frac{10\eta\rho^2B^4\K}{\theta},
\end{aligned}
\end{eqnarray}
where we use (\ref{cond1}) in $\overset{a}\leq$. Plugging into (\ref{sec-smooth2}) and using $\K\leq K$, we have
\begin{eqnarray}
\begin{aligned}\notag
f(\x^{\K})-f(\x^{0})\leq&-\frac{3\theta B^2}{8\eta\K}+\frac{10\eta\rho^2B^4\K}{\theta}+4.5\rho B^3\\
\leq&-\frac{3\theta B^2}{8\eta K}+\frac{10\eta\rho^2B^4K}{\theta}+4.5\rho B^3.
\end{aligned}
\end{eqnarray}
\end{proof}
From Lemmas \ref{lemma1} and \ref{lemma4}, we can establish the decrease of $f(\x)$ in each epoch.
\begin{corollary}\label{lemma5}
Suppose that Assumption \ref{assump} holds. Use the parameter settings in Theorem \ref{theorem1}. In each epoch of Algorithm \ref{AGD1} where the ``if condition" triggers, we have
\begin{eqnarray}
\begin{aligned}\label{cont8}
f(\x^{\K})-f(\x^{0})\leq -\frac{7\epsilon^{3/2}}{8\sqrt{\rho}}.
\end{aligned}
\end{eqnarray}
\end{corollary}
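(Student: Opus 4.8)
The plan is to do a two-case split on the size of $\|\nabla f(\y^{\K-1})\|$, exactly mirroring the dichotomy between Lemma \ref{lemma1} and Lemma \ref{lemma4}, and then substitute the explicit constants $\eta=\tfrac{1}{4L}$, $B=\sqrt{\epsilon/\rho}$, $\theta=4(\epsilon\rho\eta^2)^{1/4}$, $K=\tfrac1\theta$ of Theorem \ref{theorem1}. Since the ``if condition'' triggers, $\K$ is well defined and one of $\|\nabla f(\y^{\K-1})\|>\tfrac B\eta$ or $\|\nabla f(\y^{\K-1})\|\le\tfrac B\eta$ holds, so the two cases are exhaustive.

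In the case $\|\nabla f(\y^{\K-1})\|>\tfrac{B}{\eta}$, Lemma \ref{lemma1} already gives $f(\x^{\K})-f(\x^{0})\le-\tfrac{B^2}{4\eta}$. Substituting $B^2=\epsilon/\rho$ and $\eta=\tfrac1{4L}$ turns this into $-\tfrac{L\epsilon}{\rho}$, and the constraint $\theta\le 1$ (which with $\eta=\tfrac1{4L}$ is equivalent to $L\ge 4\sqrt{\epsilon\rho}$) makes $\tfrac{L\epsilon}{\rho}\ge\tfrac{4\epsilon^{3/2}}{\sqrt\rho}\ge\tfrac{7\epsilon^{3/2}}{8\sqrt\rho}$, so this case is comfortable with room to spare.

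In the case $\|\nabla f(\y^{\K-1})\|\le\tfrac B\eta$, I would start from Lemma \ref{lemma4}, namely $f(\x^{\K})-f(\x^{0})\le-\tfrac{3\theta B^2}{8\eta K}+\tfrac{10\eta\rho^2 B^4 K}{\theta}+4.5\rho B^3$, plug in $K=\tfrac1\theta$ to get $-\tfrac{3\theta^2 B^2}{8\eta}+\tfrac{10\eta\rho^2 B^4}{\theta^2}+4.5\rho B^3$, and then use the identity $\theta^2=16(\epsilon\rho\eta^2)^{1/2}=16\eta\sqrt{\epsilon\rho}$ so that $\eta$ cancels out of the first two terms. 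Combined with $B^2=\epsilon/\rho$, each of the three terms collapses to a clean multiple of $\epsilon^{3/2}/\sqrt\rho$ — respectively $-6$, $+\tfrac58$, and $+\tfrac92$ — which add up to exactly $-\tfrac78$, yielding the claimed bound.

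The only real work is the bookkeeping in the second case: the point of the specific couplings $B=\sqrt{\epsilon/\rho}$, $K=1/\theta$, and $\theta=\Theta((\epsilon\rho\eta^2)^{1/4})$ is precisely to make the genuine descent term $\tfrac{3\theta B^2}{8\eta K}$ dominate both the quadratic-model error $\tfrac{10\eta\rho^2 B^4 K}{\theta}$ (traceable to the $\rho$-Hessian-Lipschitz estimate $\|\widetilde\bdelta^k\|\le 2\rho B^2$ in \eqref{sec-smooth}) and the cubic remainder $4.5\rho B^3$ from \eqref{sec-smooth2}. The leading constant $4$ inside $\theta$ is then tuned so that the surviving coefficient is the stated $-\tfrac78$ rather than merely some negative number; any constant of the same order would still give a negative coefficient. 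I do not expect a conceptual obstacle here — everything beyond the two lemmas is arithmetic.
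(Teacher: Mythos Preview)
Your proposal is correct and follows the same approach as the paper: a two-case split on whether $\|\nabla f(\y^{\K-1})\|$ exceeds $B/\eta$, invoking Lemma \ref{lemma1} in the first case and Lemma \ref{lemma4} in the second, then substituting the parameter choices from Theorem \ref{theorem1}. The paper's proof is terser --- it writes the two bounds as a single $\min$ and asserts the arithmetic outcome --- while you spell out the cancellations term by term (and correctly obtain $-6+\tfrac58+\tfrac92=-\tfrac78$), but the logic is identical.
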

\begin{proof}
Combing Lemmas \ref{lemma1} and \ref{lemma4} and using the parameter settings, we have
\begin{eqnarray}
\begin{aligned}\notag
f(\x^{\K})-f(\x^{0})\leq -\min\left\{\frac{3\theta B^2}{8\eta K}-\frac{10\eta\rho^2B^4K}{\theta}-4.5\rho B^3,\frac{B^2}{4\eta}\right\}=-\min\left\{\frac{7\epsilon^{3/2}}{8\sqrt{\rho}},\frac{\epsilon}{4\eta\rho}\right\}.
\end{aligned}
\end{eqnarray}
From $\theta=4(\epsilon\rho\eta^2)^{1/4}\leq 1$, we have $\frac{7\epsilon^{3/2}}{8\sqrt{\rho}}\leq\frac{\epsilon}{4\eta\rho}$.
\end{proof}

\subsubsection{Small Gradient in the Last Epoch}\label{subsec3}
We first give the following lemma for the last epoch.
\begin{lemma}\label{lemma6}
Suppose that Assumption \ref{assump} holds. Use the parameter settings in Theorem \ref{theorem1}. In the last epoch of Algorithm \ref{AGD1} where the ``if condition" does not trigger, we have $\|\nabla f(\hat\y)\|\leq82\epsilon$.
\end{lemma}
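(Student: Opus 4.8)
The plan is to split $\nabla f(\hat\y)$ as the average $\frac{1}{K_0+1}\sum_{k=0}^{K_0}\nabla f(\y^{k})$ of the gradients seen along the last epoch plus the discrepancy $\nabla f(\hat\y)-\frac{1}{K_0+1}\sum_{k=0}^{K_0}\nabla f(\y^{k})$, and to bound each piece by $\bO(\epsilon)$. For the average term, I would first record the consequences of the last epoch not triggering the restart: (\ref{cond3})--(\ref{cond4}) give $\|\x^{k}-\x^{0}\|\le B$ and $\|\y^{k}-\x^{0}\|\le 2B$ for all $k\le K$, and taking $k=K$ in (\ref{cond3}) gives $\sum_{t=0}^{K-1}\|\x^{t+1}-\x^{t}\|^2\le B^2/K$. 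Since $K_0$ minimizes $\|\x^{k+1}-\x^{k}\|$ over the at least $K/2$ indices $k\in[\lfloor K/2\rfloor,K-1]$, this yields $\|\x^{K_0+1}-\x^{K_0}\|\le\sqrt{2}\,B/K$. Then, using $\nabla f(\y^{k})=\frac1\eta(\y^{k}-\x^{k+1})$ (line 4), $\y^{k}=\x^{k}+(1-\theta)(\x^{k}-\x^{k-1})$ (line 3), and $\x^{-1}=\x^{0}$, the sum $\sum_{k=0}^{K_0}(\y^{k}-\x^{k+1})$ telescopes exactly to $\theta(\x^{0}-\x^{K_0})+(\x^{K_0}-\x^{K_0+1})$, whence $\bigl\|\frac{1}{K_0+1}\sum_{k=0}^{K_0}\nabla f(\y^{k})\bigr\|\le\frac{\theta B+\sqrt2\,B/K}{\eta(K_0+1)}$. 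Plugging in $K_0+1>K/2$, $K=1/\theta$, $\theta^2=16\sqrt{\epsilon\rho}\,\eta$, and $B=\sqrt{\epsilon/\rho}$ reduces this to $32(1+\sqrt2)\epsilon=\bO(\epsilon)$.

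For the discrepancy term, I would use Hessian-Lipschitzness exactly as in (\ref{sec-smooth}). With $\H=\nabla^2 f(\x^{0})$, every $\y^{k}$ ($k\le K_0$) and $\hat\y$ lies within $2B$ of $\x^{0}$ — for $\hat\y$ because it is a convex combination of the $\y^{k}$ — so $\|\nabla f(\z)-\nabla f(\x^{0})-\H(\z-\x^{0})\|\le\frac\rho2\|\z-\x^{0}\|^2\le 2\rho B^2$ for $\z\in\{\y^{0},\dots,\y^{K_0},\hat\y\}$. Because the affine map $\z\mapsto\nabla f(\x^{0})+\H(\z-\x^{0})$ is linear, its average over $\y^{0},\dots,\y^{K_0}$ equals its value at $\hat\y$, so the triangle inequality gives $\bigl\|\nabla f(\hat\y)-\frac{1}{K_0+1}\sum_{k=0}^{K_0}\nabla f(\y^{k})\bigr\|\le 4\rho B^2=4\epsilon$. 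Adding the two bounds yields $\|\nabla f(\hat\y)\|\le\bigl(4+32(1+\sqrt2)\bigr)\epsilon\le 82\epsilon$.

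I expect the main obstacle to be the average term: one has to spot the exact telescoping identity and realize that pairing it with the min-selection of $K_0$ makes both $\theta\|\x^{0}-\x^{K_0}\|$ and $\|\x^{K_0}-\x^{K_0+1}\|$ of the same small order $\theta B$ — a naive choice of the output index would only give $\bO(\epsilon/\sqrt\theta)$, and the usual remedy of invoking strongly convex AGD analysis would reintroduce the $\log\frac1\epsilon$ factor this paper is designed to avoid. The discrepancy bound, the last-epoch estimates, and the final arithmetic under the prescribed $\eta,B,\theta,K$ are routine; I would only be careful in tracking the numerical constants $2(1+\sqrt2)$ from the telescoping/averaging, $16$ from $\theta^2B/\eta=16\epsilon$, and the extra $4$ from $4\rho B^2$, to confirm the stated bound $82\epsilon$.
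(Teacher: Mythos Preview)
Your proposal is correct and follows essentially the same approach as the paper: the telescoping of $\sum_{k=0}^{K_0}(\y^{k}-\x^{k+1})$, the min-selection bound $\|\x^{K_0+1}-\x^{K_0}\|\le\sqrt{2}B/K$, and the Hessian-Lipschitz discrepancy estimate $4\rho B^2$ all match the paper's argument and yield the identical constant $82\epsilon$. The only cosmetic difference is that the paper routes the computation through the quadratic surrogate $g$ in the eigenbasis (writing the error as the $\widetilde\bdelta^{k}$ terms inside the telescoped sum), whereas you work directly with $\nabla f(\y^{k})=\frac{1}{\eta}(\y^{k}-\x^{k+1})$ and handle the non-affineness of $\nabla f$ afterward; the eigendecomposition is not actually needed for this lemma, so your presentation is slightly more direct.
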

\begin{proof}
Denote $\widetilde\y=\U^T\hat\y=\frac{1}{K_0+1}\sum_{k=0}^{K_0}\U^T\y^{k}=\frac{1}{K_0+1}\sum_{k=0}^{K_0}\widetilde\y^{k}$. Since $g$ is quadratic, we have
\begin{align}\label{cont10}
\|\nabla g(\widetilde\y)\|=&\left\|\frac{1}{K_0+1}\sum_{k=0}^{K_0}\nabla g(\widetilde\y^{k})\right\|\nonumber\\
\overset{a}=&\frac{1}{\eta (K_0+1)}\left\|\sum_{k=0}^{K_0}\left(\widetilde\x^{k+1}-\widetilde\y^k+\eta\widetilde\bdelta^{k}\right)\right\|\nonumber\\
=&\frac{1}{\eta (K_0+1)}\left\|\sum_{k=0}^{K_0}\left(\widetilde\x^{k+1}-\widetilde\x^k-(1-\theta)(\widetilde\x^{k}-\widetilde\x^{k-1})+\eta\widetilde\bdelta^{k}\right)\right\|\nonumber\\
\overset{b}=&\frac{1}{\eta (K_0+1)}\left\|\widetilde\x^{K_0+1}-\widetilde\x^0-(1-\theta)(\widetilde\x^{K_0}-\widetilde\x^{0})+\eta\sum_{k=0}^{K_0}\widetilde\bdelta^{k}\right\|\nonumber\\
=&\frac{1}{\eta (K_0+1)}\left\|\widetilde\x^{K_0+1}-\widetilde\x^{K_0}+\theta(\widetilde\x^{K_0}-\widetilde\x^{0})+\eta\sum_{k=0}^{K_0}\widetilde\bdelta^{k}\right\|\nonumber\\
\leq&\frac{1}{\eta(K_0+1)}\left(\|\widetilde\x^{K_0+1}-\widetilde\x^{K_0}\|+\theta\|\widetilde\x^{K_0}-\widetilde\x^{0}\|+\eta\sum_{k=0}^{K_0}\|\widetilde\bdelta^{k}\|\right)\nonumber\\
\overset{c}\leq&\frac{2}{\eta K}\|\widetilde\x^{K_0+1}-\widetilde\x^{K_0}\|+\frac{2\theta B}{\eta K}+2\rho B^2,
\end{align}
where we use (\ref{s2}) in $\overset{a}=$, $\x^{-1}=\x^{0}$ in $\overset{b}=$, $K_0+1\geq\frac{K}{2}$, (\ref{cond3}), (\ref{sec-smooth}), and (\ref{cond4}) in $\overset{c}\leq$. From $K_0=\argmin_{\lfloor \frac{K}{2}\rfloor\leq k\leq K-1}\|\x^{k+1}-\x^{k}\|$, we have
\begin{eqnarray}
\begin{aligned}\label{cont7}
\|\x^{K_0+1}-\x^{K_0}\|^2\leq& \frac{1}{K-\lfloor K/2\rfloor}\sum_{k=\lfloor K/2\rfloor}^{K-1}\|\x^{k+1}-\x^{k}\|^2\\
\leq& \frac{1}{K-\lfloor K/2\rfloor}\sum_{k=0}^{K-1}\|\x^{k+1}-\x^{k}\|^2\\
\overset{d}\leq&\frac{1}{K-\lfloor K/2\rfloor}\frac{B^2}{K}\leq\frac{2B^2}{K^2},
\end{aligned}
\end{eqnarray}
where we use (\ref{cond3}) in $\overset{d}\leq$. On the other hand, we also have
\begin{eqnarray}
\begin{aligned}\notag
\|\nabla f(\hat\y)\|=\|\widetilde\nabla f(\hat\y)\|\leq& \|\nabla g(\widetilde\y)\|+\|\widetilde\nabla f(\hat\y)-\nabla g(\widetilde\y)\|\\
=&\|\nabla g(\widetilde\y)\|+\|\widetilde\nabla f(\hat\y)-\widetilde\nabla f(\x^{0})-\bLambda(\widetilde\y-\widetilde\x^{0})\|\\
=&\|\nabla g(\widetilde\y)\|+\|\nabla f(\hat\y)-\nabla f(\x^{0})-\H(\hat\y-\x^{0})\|\\
\leq&\|\nabla g(\widetilde\y)\|+\frac{\rho}{2}\|\hat\y-\x^{0}\|^2\overset{e}\leq \|\nabla g(\widetilde\y)\|+2\rho B^2,
\end{aligned}
\end{eqnarray}
where we use $\|\hat\y-\x^{0}\|\leq\frac{1}{K_0+1}\sum_{k=0}^{K_0}\|\y^{k}-\x^{0}\|\leq 2B$ from (\ref{cond4}) in $\overset{e}\leq$. So we have
\begin{eqnarray}
\begin{aligned}\notag
\|\nabla f(\hat\y)\|\leq\frac{2\sqrt{2}B}{\eta K^2}+\frac{2\theta B}{\eta K}+4\rho B^2\leq82\epsilon.
\end{aligned}
\end{eqnarray}
\end{proof}
Based Corollary \ref{lemma5} and Lemma \ref{lemma6}, we can prove Theorem \ref{theorem1}.
\begin{proof}
For each epoch where the ``if condition" triggers, we have (\ref{cont8}). Note that at the beginning of each epoch, we set $\x^0$ to be the last iterate $\x^{\K}$ in the previous epoch. Summing (\ref{cont8}) over all epochs, say $N$ total epochs, and using $\min_{\x}f(\x)\leq f(\x^{\K})$, we have
\begin{eqnarray}
\begin{aligned}\notag
\min_{\x}f(\x)-f(\x_{int})\leq -N\frac{7\epsilon^{3/2}}{8\sqrt{\rho}}.
\end{aligned}
\end{eqnarray}
So the algorithm will terminate (that is, the ``if condition" does not trigger and the while loop breaks) in at most $\frac{8\triangle_f\sqrt{\rho}}{7\epsilon^{3/2}}$ epochs. Since each epoch needs at most $K=\frac{1}{2}\left(\frac{L^2}{\epsilon\rho}\right)^{1/4}$ gradient evaluations, the total number of gradient evaluations must be less than $\frac{\triangle_fL^{1/2}\rho^{1/4}}{\epsilon^{7/4}}$.
On the other hand, in the last epoch, we have $\|\nabla f(\hat\y)\|\leq82\epsilon$ from Lemma \ref{lemma6}.
\end{proof}
\begin{remark}
The purpose of using the specific average as the output and $k\sum_{t=0}^{k-1}\|\x^{t+1}-\x^{t}\|^2> B^2$ in the ``if condition" in Algorithm \ref{AGD1}, rather than $\|\x^k-\x^0\|\geq B$, is to establish (\ref{cont7}).
\end{remark}

\subsection{Discussion on the Acceleration Mechanism}\label{sec-comp}
When we replace the AGD iterations in Algorithm \ref{AGD1} by the gradient descent steps $\x^{k+1}=\x^k-\eta\nabla f(\x^k)$ with step-size $\eta=\frac{1}{4L}$, similar to (\ref{cont11}), the descent property in each epoch becomes
\begin{eqnarray}
\begin{aligned}\notag
f(\x^{\K})-f(\x^0)\leq -\frac{7}{8\eta}\sum_{k=0}^{\K-1}\|\x^{k+1}-\x^k\|^2\leq -\frac{7B^2}{8\eta\K},
\end{aligned}
\end{eqnarray}
and the gradient norm at the averaged output $\hat\x=\frac{1}{K}\sum_{k=0}^{K-1}\x^k$ can be bounded as
\begin{eqnarray}
\begin{aligned}\notag
\|\nabla g(\hat\x)\|\leq \frac{1}{\eta K}\|\x^{K}-\x^0\|+2\rho B^2\leq \frac{B}{\eta K}+2\rho B^2.
\end{aligned}
\end{eqnarray}
By setting $B=\sqrt{\frac{\epsilon}{\rho}}$ and $K=\frac{L}{\sqrt{\epsilon\rho}}$, we have the $\bO(\epsilon^{-2})$ total complexity.

Comparing the above two inequalities with (\ref{cont9}) and (\ref{cont10}), respectively, we see that the momentum parameter $\theta$ is crucial to speedup the convergence of AGD because it allows smaller $K$ than that of GD, that is, $\frac{1}{\epsilon^{1/4}}$ v.s. $\frac{1}{\epsilon^{1/2}}$ for AGD and GD, respectively. Accordingly, smaller $K$ results in less total gradient evaluations since both methods need $\bO(\epsilon^{-1.5})$ epochs. The above comparisons show the importance of momentum and its parameter $\theta$ in the nonconvex acceleration mechanism. On the other hand, since our proofs do not invoke the analysis of strongly convex AGD, we conjecture that the nonconvex acceleration mechanism seems irrelevant to the analysis of convex AGD.

\subsection{Proof of Theorem \ref{theorem1p}}\label{sec:proof_t1p}
In this section, we prove a stronger theorem, where we replace lines 11 and 8 of Algorithm \ref{AGD1p} by (\ref{agdp-cont1}) and $f(\x^{\K})-f(\x^{0})\leq -\gamma\frac{\epsilon^{3/2}}{\sqrt{\rho'}}$, respectively. Denote $\eta_{int}$, $\rho_{int}'$, and $B_{0,int}$ to be the initializations of $\eta$, $\rho'$, and $B_0$, respectively.
\begin{theorem}\label{theorem1pp}
Suppose that Assumption \ref{assump} holds. Let $B=\sqrt{\frac{\epsilon}{\rho'}}$, $\theta=4(\epsilon\rho'\eta^2)^{1/4}\in(0,1)$, and $K=\lfloor\frac{1}{\theta}\rfloor$ in each epoch, where $\eta$ and $\rho'$ may change dynamically during epochs. Let $\gamma\leq\frac{7}{8}$, $c_0>1$, $c_1\geq c_2>1$, $\eta_{min}\leq\frac{1}{4L}$, and $\rho_{max}'\geq\rho$, where $\frac{\rho_{max}'}{\rho_{int}'}=\left(\frac{\eta_{int}}{\eta_{min}}\right)^2$. Then Algorithm \ref{AGD1p} with (\ref{agdp-cont1}) terminates in at most $\bO (\epsilon^{-7/4})$ gradient computations and $\bO (\epsilon^{-3/2})$ function evaluations, and the output satisfies $\|\nabla f(\x_{out})\|\leq\bO(\epsilon)$.
\end{theorem}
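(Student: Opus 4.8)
The plan is to reduce Theorem~\ref{theorem1pp} to the already-established machinery behind Theorem~\ref{theorem1} by carefully tracking how the dynamic parameters $\eta$ and $\rho'$ evolve and by arguing that only a bounded number of ``bad'' epochs (those that fail the decrease test on line~8) can occur. First I would observe that, inside any single epoch, once we fix the current values of $\eta$ and $\rho'$ the choices $B=\sqrt{\epsilon/\rho'}$, $\theta=4(\epsilon\rho'\eta^2)^{1/4}$, and $K=\lfloor 1/\theta\rfloor$ are exactly the scaled analogues of those in Theorem~\ref{theorem1}; hence Lemmas~\ref{lemma1}--\ref{lemma4} and Corollary~\ref{lemma5} apply verbatim with $\rho$ replaced by $\rho'$ in the constant $B$ and in the $4.5\rho B^3$ term, provided we can still guarantee $\eta\le\frac{1}{4L}$ and $\rho'\ge\rho$ (so that the true Hessian-Lipschitz bound dominates the surrogate one used in \eqref{sec-smooth}). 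The condition $\eta_{min}\le\frac{1}{4L}$ and $\rho'_{max}\ge\rho$, together with the coupling $\rho'_{max}/\rho'_{int}=(\eta_{int}/\eta_{min})^2$, is precisely what makes this robust: every time line~\eqref{agdp-cont1} fires it multiplies $\eta$ by $1/c_2$ and $\rho'$ by $c_2^2$, so the product $\epsilon\rho'\eta^2$ is invariant under that step, which keeps $\theta$ and $K$ unchanged and means $B=\sqrt{\epsilon/\rho'}$ only shrinks. Thus after finitely many ``bad'' epochs we reach parameters with $\eta\le\frac{1}{4L}$ and $\rho'\ge\rho$ and they stay there.

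The core of the argument then splits into two counting bookkeeping steps. For the ``good'' epochs---those passing the test $f(\x^{\K})-f(\x^0)\le-\gamma\epsilon^{3/2}/\sqrt{\rho'}$ on line~8---each one decreases $f$ by at least $\gamma\epsilon^{3/2}/\sqrt{\rho'_{max}}=\Omega(\epsilon^{3/2})$, and since $\x^0$ is updated to the endpoint $\x^{\K}$ and $f$ is bounded below by $\min_\x f(\x)$, a telescoping sum bounds the number of good epochs by $\bO(\triangle_f\sqrt{\rho}/\epsilon^{3/2})$. For the ``bad'' epochs I would argue that they cannot be too numerous: by Corollary~\ref{lemma5} (applied with the current $\rho'$), as soon as $B_0\le B$ the restart condition on line~6 is triggered only when the genuine decrease $f(\x^{\K})-f(\x^0)\le-\tfrac{7}{8}\epsilon^{3/2}/\sqrt{\rho'}$ already holds, hence with $\gamma\le\tfrac78$ the line~8 test succeeds and no bad epoch can occur once $B_0\le B$; since $B_0$ is divided by $c_0$ (and sometimes also $c_1$) each epoch, at most $\bO(\log_{c_0}\frac{\rho'_{max}B_{0,int}}{\epsilon})$ epochs can happen before $B_0\le B$, and this bounds the bad epochs. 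Each bad epoch also costs only $K\le 1/\theta=\bO(L^{1/2}/(\epsilon\rho)^{1/4})$ gradients plus one function evaluation, so their total contribution is a lower-order $\bO(\epsilon^{-1/4}\log\frac1\epsilon)$ in gradients and $\bO(\log\frac1\epsilon)$ in function calls.

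Putting the two counts together gives $\bO(\triangle_f\sqrt\rho/\epsilon^{3/2})+\bO(\log\frac1\epsilon)=\bO(\epsilon^{-3/2})$ total epochs and hence $\bO(\epsilon^{-7/4})$ gradient evaluations and $\bO(\epsilon^{-3/2})$ function evaluations. For the correctness of the output, I would note that when the while loop finally exits we are in the regime $B_0\le B$ with stabilized $\eta\le\frac{1}{4L}$, $\rho'\ge\rho$, and the last epoch is one in which the line~6 condition never fired for $K$ iterations; Lemma~\ref{lemma6}---again applied with the current $\rho'$ in place of $\rho$ and noting that the bound $\|\nabla f(\hat\y)\|\le\bO(B/(\eta K^2)+\theta B/(\eta K)+\rho' B^2)$ evaluates to $\bO(\epsilon)$ under these scalings---gives $\|\nabla f(\hat\y)\|\le\bO(\epsilon)$, and since $\x_{out}$ is chosen to minimize the gradient norm over $\{\x^K,\hat\y\}$ we also get $\|\nabla f(\x_{out})\|\le\bO(\epsilon)$. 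The main obstacle I anticipate is the delicate interplay in the bad epochs: one must confirm that when we discard an epoch and roll back to $\x_{cur}^0$ no spurious increase of $f$ is incurred (the rollback restores the previous endpoint, so $f$ is unchanged), and that the extra shrink factor $c_1$ cannot cause $B_0$ or $\eta$ to overshoot below the safe thresholds in a way that breaks the invariant $\rho'\le\rho'_{max}$, $\eta\ge\eta_{min}$ --- this is exactly why the hypotheses pin down $\rho'_{max}/\rho'_{int}=(\eta_{int}/\eta_{min})^2$ and $c_1\ge c_2$, and verifying that these constraints are self-consistent and sufficient is the part requiring the most care.
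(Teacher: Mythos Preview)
Your proposal is correct and follows essentially the same route as the paper: invariance of $\theta$ and $K$ under the $(\eta,\rho')$ updates, splitting epochs into valid and invalid, telescoping the function decrease over valid epochs, and bounding invalid epochs logarithmically. The one place to tighten is your claim that bad epochs cease ``as soon as $B_0\le B$'': Corollary~\ref{lemma5} (more precisely, its adaptive analogue Corollary~\ref{lemmap1}) also requires $\eta\le\frac{1}{4L}$ and $\rho'\ge\rho$, so the logarithmic count must simultaneously drive all three parameters past their thresholds, exactly as in the paper's display~(\ref{cont5}); you already flag this interplay as the delicate part at the end, so the fix is simply to fold the $\eta$ and $\rho'$ thresholds into your invalid-epoch count rather than treating $B_0\le B$ alone as sufficient.
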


Using the same proofs of Corollary \ref{lemma5} and Lemma \ref{lemma6}, we have the following two straight-forward corollaries.
\begin{corollary}\label{lemmap1}
Suppose that Assumption \ref{assump} holds. Let $B=\sqrt{\frac{\epsilon}{\rho'}}$, $\theta=4\left(\epsilon\rho'\eta^2\right)^{1/4}\in(0,1]$, and $K=\lfloor\frac{1}{\theta}\rfloor$ in one epoch, where $\eta\leq \frac{1}{4L}$ and $\rho'\geq\rho$. Assume that $\K\sum_{t=0}^{\K-1}\|\x^{t+1}-\x^{t}\|^2> B^2$ for some $\K\leq K$ and $k\sum_{t=0}^{k-1}\|\x^{t+1}-\x^{t}\|^2\leq B^2$ for all $k<\K$, then for the iterations
\begin{eqnarray}
\begin{aligned}\label{lemmapite}
\y^{k}=\x^{k}+(1-\theta)(\x^{k}-\x^{k-1}),\quad \x^{k+1}=\y^{k}-\eta\nabla f(\y^{k})
\end{aligned}
\end{eqnarray}
starting from $\x^0=\x^{-1}$, we have
\begin{eqnarray}
\begin{aligned}\label{lemmapdec}
f(\x^{\K})-f(\x^{0})\leq -\frac{7\epsilon^{3/2}}{8\sqrt{\rho'}}.
\end{aligned}
\end{eqnarray}
\end{corollary}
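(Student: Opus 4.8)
The plan is to observe that Corollary~\ref{lemmap1} is simply the statement of Corollary~\ref{lemma5} detached from Algorithm~\ref{AGD1} and written with $\rho'$ in place of $\rho$, so its proof is a verbatim replay of Lemmas~\ref{lemma1}--\ref{lemma4}. Indeed, the hypotheses $\K\sum_{t=0}^{\K-1}\|\x^{t+1}-\x^{t}\|^2>B^2$ with $\K\le K$ and $k\sum_{t=0}^{k-1}\|\x^{t+1}-\x^{t}\|^2\le B^2$ for $k<\K$ are exactly conditions~(\ref{cond1}) and~(\ref{cond2}); they force $\K\ge1$ (the empty sum cannot exceed $B^2>0$), and together with the iteration~(\ref{lemmapite}) they give $\|\y^{k}-\x^{0}\|\le 2B$ for $k<\K$, which is~(\ref{cond0}). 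Moreover, the $\rho$-Hessian-Lipschitz assumption and $\rho'\ge\rho$ imply that $f$ is also $\rho'$-Hessian Lipschitz, so every Hessian-Lipschitz estimate used in Section~\ref{sec:proof_t1}, namely inequalities~(\ref{h_smooth1}),~(\ref{sec-smooth2}), and~(\ref{sec-smooth}), holds with $\rho$ replaced by $\rho'$. Since the proofs of Lemmas~\ref{lemma1}--\ref{lemma4} use $f$ only through $\eta\le\frac{1}{4L}$, $\theta\in(0,1]$, those Hessian-Lipschitz inequalities, and~(\ref{cond1})--(\ref{cond0}), they carry over unchanged.

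Concretely, I would split on the size of $\|\nabla f(\y^{\K-1})\|$. When $\|\nabla f(\y^{\K-1})\|>\frac{B}{\eta}$, the argument of Lemma~\ref{lemma1} gives $f(\x^{\K})-f(\x^{0})\le-\frac{B^2}{4\eta}$. When $\|\nabla f(\y^{\K-1})\|\le\frac{B}{\eta}$, the arguments of Lemmas~\ref{lemma2},~\ref{lemma3}, and~\ref{lemma4} with $\rho\to\rho'$ give $f(\x^{\K})-f(\x^{0})\le-\frac{3\theta B^2}{8\eta K}+\frac{10\eta(\rho')^2B^4K}{\theta}+4.5\rho'B^3$. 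Combining the two cases, $f(\x^{\K})-f(\x^{0})\le-\min\big\{\frac{3\theta B^2}{8\eta K}-\frac{10\eta(\rho')^2B^4K}{\theta}-4.5\rho'B^3,\ \frac{B^2}{4\eta}\big\}$.

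It then remains to substitute the parameter choices. The only difference from the computation in the proof of Corollary~\ref{lemma5} is that here $K=\lfloor\frac1\theta\rfloor$ rather than $K=\frac1\theta$; but $1\le\lfloor\frac1\theta\rfloor\le\frac1\theta$ (using $\theta\le1$), so $\frac{3\theta B^2}{8\eta K}\ge\frac{3\theta^2B^2}{8\eta}$ and $\frac{10\eta(\rho')^2B^4K}{\theta}\le\frac{10\eta(\rho')^2B^4}{\theta^2}$, hence the first-case bound is at least as strong as at $K=\frac1\theta$. Plugging $B^2=\frac{\epsilon}{\rho'}$ and $\theta^2=16\eta\sqrt{\epsilon\rho'}$ into $\frac{3\theta^2B^2}{8\eta}-\frac{10\eta(\rho')^2B^4}{\theta^2}-4.5\rho'B^3$ gives $\big(6-\frac58-\frac92\big)\frac{\epsilon^{3/2}}{\sqrt{\rho'}}=\frac{7\epsilon^{3/2}}{8\sqrt{\rho'}}$, while $\frac{B^2}{4\eta}=\frac{\epsilon}{4\eta\rho'}$; and $\theta=4(\epsilon\rho'\eta^2)^{1/4}\le1$ forces $\eta\sqrt{\epsilon\rho'}\le\frac1{16}$, so that $\frac{7\epsilon^{3/2}}{8\sqrt{\rho'}}\le\frac{\epsilon}{4\eta\rho'}$ and the minimum equals $\frac{7\epsilon^{3/2}}{8\sqrt{\rho'}}$, which is~(\ref{lemmapdec}). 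I do not expect any genuine obstacle here: the entire argument is a bookkeeping replay of Lemmas~\ref{lemma1}--\ref{lemma4} with $\rho\to\rho'$, and the only points needing a moment's care are checking that those proofs use $\rho$ exclusively through the Hessian-Lipschitz bounds and that the floor in $K$ cannot make $K=0$.
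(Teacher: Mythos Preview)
Your proposal is correct and follows exactly the approach the paper takes: the paper simply states that Corollary~\ref{lemmap1} follows ``using the same proofs of Corollary~\ref{lemma5},'' and you have faithfully carried out that replay with $\rho\to\rho'$, additionally handling the floor in $K=\lfloor 1/\theta\rfloor$ with the appropriate one-sided bounds.
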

\begin{corollary}\label{lemmap2}
Suppose that Assumption \ref{assump} holds. Let $B=\sqrt{\frac{\epsilon}{\rho'}}$, $\theta=4\left(\epsilon\rho'\eta^2\right)^{1/4}\in(0,1)$, and $K=\lfloor\frac{1}{\theta}\rfloor$ in one epoch. Assume that $k\sum_{t=0}^{k-1}\|\x^{t+1}-\x^{t}\|^2\leq B^2$ for all $k\leq K$, then for the iterations (\ref{lemmapite}), we have
\begin{eqnarray}
\begin{aligned}\notag
\|\nabla f(\hat\y)\|\leq\frac{2\sqrt{2}B}{\eta K^2}+\frac{2\theta B}{\eta K}+4\rho B^2 \leq \frac{78\epsilon}{(1-\theta)^2}+\frac{4\rho\epsilon}{\rho'},
\end{aligned}
\end{eqnarray}
where $\hat\y$ is defined on lines 15 and 16 of Algorithm \ref{AGD1p}.
\end{corollary}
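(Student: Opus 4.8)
\emph{Proof sketch.} The plan is to repeat the proof of Lemma~\ref{lemma6} almost line for line; the only structural change is bookkeeping, since now $B=\sqrt{\epsilon/\rho'}$ is tied to the \emph{algorithmic} parameter $\rho'$, whereas the constant in the Hessian Lipschitz inequality (\ref{h_smooth1}) remains the \emph{true} $\rho$. I would fix the epoch and, as in Section~\ref{subsec2}, write $\H=\nabla^2 f(\x^0)=\U\bLambda\U^T$, rotate the iterates via $\widetilde\x^k=\U^T\x^k$, $\widetilde\y^k=\U^T\y^k$, introduce the quadratic model $g$ of (\ref{def_g}) and the discrepancy $\widetilde\bdelta^k=\widetilde\nabla f(\y^k)-\nabla g(\widetilde\y^k)$, so that (\ref{lemmapite}) rewrites as (\ref{s1})--(\ref{s2}). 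The hypothesis $k\sum_{t=0}^{k-1}\|\x^{t+1}-\x^t\|^2\le B^2$ for all $k\le K$ is exactly the analogue of (\ref{cond3}); combined with the $\y$-update it gives $\|\x^k-\x^0\|\le B$ and $\|\y^k-\x^0\|\le 2B$ for all $k\le K$ (the analogue of (\ref{cond4})), and then the integral representation of $\widetilde\bdelta^k$ used in (\ref{sec-smooth}) yields $\|\widetilde\bdelta^k\|\le\frac{\rho}{2}(2B)^2=2\rho B^2$.

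Next I would reproduce the averaging estimate (\ref{cont10}). Since $g$ is quadratic, $\nabla g(\widetilde\y)=\frac{1}{K_0+1}\sum_{k=0}^{K_0}\nabla g(\widetilde\y^k)$; substituting (\ref{s2}), expanding $\widetilde\x^{k+1}-\widetilde\y^k=\widetilde\x^{k+1}-\widetilde\x^k-(1-\theta)(\widetilde\x^k-\widetilde\x^{k-1})$, and telescoping both sums with $\widetilde\x^{-1}=\widetilde\x^0$ collapses everything to $\widetilde\x^{K_0+1}-\widetilde\x^{K_0}+\theta(\widetilde\x^{K_0}-\widetilde\x^0)+\eta\sum_{k=0}^{K_0}\widetilde\bdelta^k$. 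The triangle inequality, $K_0+1\ge K/2$, $\|\widetilde\x^{K_0}-\widetilde\x^0\|\le B$, and $\|\widetilde\bdelta^k\|\le 2\rho B^2$ give $\|\nabla g(\widetilde\y)\|\le\frac{2}{\eta K}\|\x^{K_0+1}-\x^{K_0}\|+\frac{2\theta B}{\eta K}+2\rho B^2$ (using $\|\widetilde\v\|=\|\v\|$ for orthogonal $\U$). Because $K_0=\argmin_{\lfloor K/2\rfloor\le k\le K-1}\|\x^{k+1}-\x^k\|$, the same computation as (\ref{cont7}) bounds $\|\x^{K_0+1}-\x^{K_0}\|^2$ by the average of $\|\x^{k+1}-\x^k\|^2$ over the window, which is at most $\frac{1}{K-\lfloor K/2\rfloor}\cdot\frac{B^2}{K}\le\frac{2B^2}{K^2}$, so $\|\x^{K_0+1}-\x^{K_0}\|\le\frac{\sqrt 2 B}{K}$. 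Transferring back to $f$ by $\|\nabla f(\hat\y)\|\le\|\nabla g(\widetilde\y)\|+\frac{\rho}{2}\|\hat\y-\x^0\|^2$ and $\|\hat\y-\x^0\|\le 2B$ (Jensen applied to (\ref{cond4})) produces the first asserted inequality $\|\nabla f(\hat\y)\|\le\frac{2\sqrt 2 B}{\eta K^2}+\frac{2\theta B}{\eta K}+4\rho B^2$.

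The second inequality is then pure arithmetic with the stated parameters. From $\theta\in(0,1)$ we get $K=\lfloor 1/\theta\rfloor>\frac{1-\theta}{\theta}$, hence $\frac{1}{K}<\frac{\theta}{1-\theta}$ and $\frac{1}{K^2}<\frac{\theta^2}{(1-\theta)^2}$. Using $\theta^2=16\eta\sqrt{\epsilon\rho'}$ and $B=\sqrt{\epsilon/\rho'}$, so that $\frac{\theta^2 B}{\eta}=16\epsilon$, one obtains $\frac{2\sqrt 2 B}{\eta K^2}\le\frac{32\sqrt 2\,\epsilon}{(1-\theta)^2}$ and $\frac{2\theta B}{\eta K}\le\frac{32\epsilon}{1-\theta}\le\frac{32\epsilon}{(1-\theta)^2}$; since $32\sqrt 2+32<78$ their sum is at most $\frac{78\epsilon}{(1-\theta)^2}$, while the remaining term is $4\rho B^2=\frac{4\rho\epsilon}{\rho'}$.

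The only point that requires attention — and I expect it to be the sole subtlety — is to keep $\rho$ (never $\rho'$) in every error term coming from the Hessian Lipschitz bound, namely in $\|\widetilde\bdelta^k\|\le 2\rho B^2$ and in the linearization error of $\nabla f$ at $\hat\y$, while using $\rho'$ consistently in $B$, $\theta$, and $K$. This is exactly why the final residual reads $\frac{4\rho\epsilon}{\rho'}$, which is small precisely when $\rho'\gtrsim\rho$; beyond this, no new difficulty arises relative to Lemma~\ref{lemma6}.
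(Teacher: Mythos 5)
Your proposal is correct and follows essentially the same route as the paper, which explicitly derives Corollary~\ref{lemmap2} by rerunning the proof of Lemma~\ref{lemma6} with the parameters $B=\sqrt{\epsilon/\rho'}$, $\theta=4(\epsilon\rho'\eta^2)^{1/4}$, $K=\lfloor 1/\theta\rfloor$, keeping the true $\rho$ in the Hessian-Lipschitz error terms. Your handling of the floor via $K>(1-\theta)/\theta$ (yielding the $(1-\theta)^{-2}$ factor) and the arithmetic $\theta^2B/\eta=16\epsilon$, $32\sqrt{2}+32<78$, $4\rho B^2=4\rho\epsilon/\rho'$ reproduce exactly the stated bound.
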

Now, we can prove Theorem \ref{theorem1pp}.
\begin{proof}
Recall that we define one round of AGD to be one epoch and the parameters $\eta$, $\rho'$, $B_0$, and $B$ do not change during each epoch. From the update of $\eta$ and $\rho'$, we know $\theta$ and $K$ never change. That is, $\theta=4\left(\epsilon\rho_{int}'\eta_{int}^2\right)^{1/4}=4\left(\epsilon\rho_{max}'\eta_{min}^2\right)^{1/4}$ and $K=\lfloor\frac{1}{\theta}\rfloor$ all the time.

We first consider the last epoch if the algorithm terminates. From line 2 of Algorithm \ref{AGD1p}, we know the while loop breaks when $k\geq K$ and $B_0\leq B$. In the last epoch where the ``if condition" on line 6 does not trigger, we have $k\sum_{t=0}^{k-1}\|\x^{t+1}-\x^{t}\|^2\leq \max \{B^2,B_0^2\}$ and $k\leq K$. So the last epoch consists of $K$ iterations and $k\sum_{t=0}^{k-1}\|\x^{t+1}-\x^{t}\|^2\leq B^2$ for all $k\leq K$. From Corollary \ref{lemmap2}, we have $\|\nabla f(\hat\y)\|\leq\frac{78\epsilon}{(1-\theta)^2}+\frac{4\rho\epsilon}{\rho_{int}'}=\bO(\epsilon)$.

Next, we prove the algorithm will terminate in at most $\bO(\epsilon^{-3/2})$ epochs. In each epoch where the ``if condition" on line 6 triggers, we execute either line 9 or line 11 (in fact, step (\ref{agdp-cont1})), depending on the condition on line 8. Denote one epoch to be valid when the ``if condition" on line 8 holds. Otherwise, denote this epoch to be invalid, where invalid means that we discard the whole iterates in this epoch and reset $\x^0$ and $\x^{-1}$ to be the last iterate in the previous valid epoch, which is stored in $\x_{cur}^0$.

Consider the total number of invalid epochs. We can prove that invalid epoch never appears and line 11 never executes when $B_0\leq B$, $\eta\leq \frac{1}{4L}$, and $\rho'\geq \rho$. In fact, for each epoch except the last one, when $B_0\leq B$, we always have $k< K$. Otherwise, the while loop on line 2 breaks. Thus, when the ``if condition " on line 6 triggers, we must have $\K\sum_{t=0}^{\K-1}\|\x^{t+1}-\x^{t}\|^2> B^2$ for some $\K\leq K$ and $k\sum_{t=0}^{k-1}\|\x^{t+1}-\x^{t}\|^2\leq B^2$ for all $k<\K$. From Corollary \ref{lemmap1}, we have $f(\x^{\K})-f(\x^{0})\leq -\frac{7\epsilon^{3/2}}{8\sqrt{\rho'}}\leq -\gamma\frac{\epsilon^{3/2}}{\sqrt{\rho'}}$, which triggers the condition on line 8. Thus, line 11 never executes. So we only need to count the number of epochs such that $B_0\leq B$, $\eta\leq \frac{1}{4L}$, and $\rho'\geq \rho$. Letting
\begin{eqnarray}
\begin{aligned}\label{cont5}
\frac{B_{0,int}}{(c_0c_1)^{N_{iv}}}\leq\sqrt{\frac{\epsilon}{\rho_{int}'c_2^{2N_{iv}}}}, \quad\frac{\eta_{int}}{c_2^{N_{iv}}}\leq\frac{1}{4L}, \quad\rho_{int}'c_2^{2N_{iv}}\geq\rho,
\end{aligned}
\end{eqnarray}
we have $N_{iv}=\bO\left(\log_{c_0c_1/c_2}\frac{B_{0,int}\rho_{int}'}{\epsilon}+\log_{c_2}(L\eta_{int})+\log_{c_2}\frac{\rho}{\rho_{int}'}\right)$. So we only need $\bO(\log \frac{C}{\epsilon})$ invalid epochs for some constant $C$ to get $B_0\leq B$, $\eta\leq\frac{1}{4L}$, and $\rho'\geq\rho$.

Consider the valid epochs. Since each valid epoch decreases the objective value at least $\gamma\frac{\epsilon^{3/2}}{\sqrt{\rho'}}\geq\gamma\frac{\epsilon^{3/2}}{\sqrt{\rho_{max}'}}$, we have at most $\frac{\triangle_f\sqrt{\rho_{max}'}}{\gamma\epsilon^{3/2}}$ valid epochs.

Putting the two cases together, we need at most $\bO\left(\frac{\triangle_f\sqrt{\rho_{max}'}}{\epsilon^{3/2}}+\log \frac{C}{\epsilon}\right)$ epochs, and accordingly, $\bO\left(\frac{\triangle_f\sqrt{\rho_{max}'}}{\epsilon^{3/2}}+\log \frac{C}{\epsilon}\right)$ function evaluations. On the other hand, each epoch, no matter valid or not, needs at most $K+1=\bO\left(\frac{1}{(\epsilon\rho'\eta^2)^{1/4}}\right)=\bO\left(\frac{1}{(\epsilon\rho_{max}'\eta_{min}^2)^{1/4}}\right)$ gradient computations (see line 6 of Algorithm \ref{AGD1p}). So the total number of gradient computations is $\bO\left(\left(\frac{\triangle_f\sqrt{\rho_{max}'}}{\epsilon^{3/2}}+\log \frac{C}{\epsilon}\right)\left(\frac{1}{(\epsilon\rho_{max}'\eta_{min}^2)^{1/4}}\right)\right)=\bO\left(\frac{\triangle_f(\rho_{max}')^{1/4}}{\epsilon^{7/4}\eta_{min}^{1/2}}+\frac{1}{(\epsilon\rho_{max}'\eta_{min}^2)^{1/4}}\log \frac{C}{\epsilon}\right)$.

\end{proof}
At last, we consider Theorem \ref{theorem1p}. In the original Algorithm \ref{AGD1p} where we do not dynamically change $\rho'$ and $\eta$, we only need to replace (\ref{cont5}) by $\frac{B_{0,int}}{c_0^{N_{iv}}}\leq\sqrt{\frac{\epsilon}{\rho}}$ such that $N_{iv}=\bO\left(\log_{c_0}\frac{\rho B_{0,int}}{\epsilon}\right)$, even if line 11 in Algorithm \ref{AGD1p} is never triggered. Since $\rho'$ and $\eta$ are fixed at $\rho$ and $\frac{1}{4L}$, respectively, we have the complexity of $\bO\left(\frac{\triangle_f\sqrt{\rho}}{\epsilon^{3/2}}+\log \frac{\rho B_{0,int}}{\epsilon}\right)$ function evaluations and $\bO\left(\frac{\triangle_fL^{1/2}\rho^{1/4}}{\epsilon^{7/4}}+\frac{L^{1/2}}{\epsilon^{1/4}\rho^{1/4}}\log \frac{\rho B_{0,int}}{\epsilon}\right)$ gradient computations.

\subsection{Proof of Theorem \ref{theorem1hb}}\label{sec:proof_t2}

We follow the proof sketch in Section \ref{sec:proof_t1} and use the notations therein. Specifically, (\ref{cond1}), (\ref{cond2}), and (\ref{cond3}) also hold for the heavy ball method.

\subsubsection{Large Gradient of $\|\nabla f(\x^{\K-1})\|$}\label{subsec1hb}
Similar to Lemma \ref{lemma1}, we first consider the case when $\|\nabla f(\y^{\K-1})\|$ is large and give the following lemma for Algorithm \ref{HB1}.
\begin{lemma}\label{lemma1hb}
Suppose that Assumption \ref{assump} holds. Let $\eta\leq\frac{1}{4L}$ and $0\leq\theta\leq \frac{1}{10}$. In each epoch of Algorithm \ref{HB1} where the ``if condition" triggers, when $\|\nabla f(\x^{\K-1})\|> \frac{4B}{\eta}$, we have
\begin{eqnarray}
\begin{aligned}\notag
f(\z^{\K})-f(\x^{0})\leq-\frac{3B^2}{16\eta}.
\end{aligned}
\end{eqnarray}
\end{lemma}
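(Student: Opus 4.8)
The plan is to follow the proof of Lemma~\ref{lemma1}, telescoping an $L$-smoothness descent inequality over the epoch, with two changes dictated by the heavy ball iteration. First, the gradient is now evaluated at $\x^{k}$ rather than at an extrapolated point, so the per-step decrease must be extracted by splitting the step $\x^{k+1}-\x^{k}=-\eta\nabla f(\x^{k})+(1-\theta)(\x^{k}-\x^{k-1})$ into a gradient part and a stale-momentum part and using Young's inequality. Second, the restart lands at the convex combination $\z^{\K}$ rather than at $\x^{\K}$, so the last step of the epoch is treated separately, and the particular weight $(1-2\theta)(1-\theta)$ enters there. Throughout I use the bounds (\ref{cond1}) and (\ref{cond2}), which also hold for the heavy ball method.

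I would first derive the per-step inequality. From $f(\x^{k+1})\le f(\x^{k})+\langle\nabla f(\x^{k}),\x^{k+1}-\x^{k}\rangle+\frac{L}{2}\|\x^{k+1}-\x^{k}\|^{2}$, substituting the update, bounding $(1-\theta)\langle\nabla f(\x^{k}),\x^{k}-\x^{k-1}\rangle$ by Young's inequality, bounding $\|\x^{k+1}-\x^{k}\|^{2}\le 2\eta^{2}\|\nabla f(\x^{k})\|^{2}+2(1-\theta)^{2}\|\x^{k}-\x^{k-1}\|^{2}$, and using $\eta\le\frac{1}{4L}$, one obtains
\[
f(\x^{k+1})-f(\x^{k})\le-\tfrac{\eta}{4}\|\nabla f(\x^{k})\|^{2}+\tfrac{3(1-\theta)^{2}}{4\eta}\|\x^{k}-\x^{k-1}\|^{2}.
\]
Summing for $k=0,\dots,\K-2$, using $\x^{0}=\x^{-1}$, discarding the nonpositive gradient terms, and using (\ref{cond2}) to bound the momentum sum by $B^{2}$, gives $f(\x^{\K-1})-f(\x^{0})\le\frac{3B^{2}}{4\eta}$ (the degenerate cases $\K\in\{1,2\}$, where the sums are empty, are immediate).

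For the remaining step, set $\beta=(1-2\theta)(1-\theta)$, so that $\z^{\K}-\x^{\K-1}=\frac{1}{1+\beta}(\x^{\K}-\x^{\K-1})=\frac{1}{1+\beta}\bigl(-\eta\nabla f(\x^{\K-1})+(1-\theta)(\x^{\K-1}-\x^{\K-2})\bigr)$. Applying $L$-smoothness at $\x^{\K-1}$ along this direction, Young's inequality on the cross term, and $\eta\le\frac{1}{4L}$ once more, gives $f(\z^{\K})-f(\x^{\K-1})\le-\frac{\eta}{1+\beta}\bigl(\frac12-\frac{1}{4(1+\beta)}\bigr)\|\nabla f(\x^{\K-1})\|^{2}+\frac{C}{\eta}\|\x^{\K-1}-\x^{\K-2}\|^{2}$ for an absolute constant $C<1$. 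The weight $(1-2\theta)(1-\theta)$ is chosen precisely so that the damping factor $\frac{1}{1+\beta}$ keeps the gradient-descent term dominant: $0\le\theta\le\frac1{10}$ forces $0<\beta\le1$, hence $\frac{1}{1+\beta}\in[\tfrac12,1)$ and $\frac{1}{1+\beta}\bigl(\frac12-\frac{1}{4(1+\beta)}\bigr)\ge\frac{3}{16}$ (equality at $\theta=0$); an undamped full step $\x^{\K-1}\to\x^{\K}$ would not retain this margin, which is why the restart does not simply zero the momentum at $\x^{\K}$. Then (\ref{cond2}) also gives $\|\x^{\K-1}-\x^{\K-2}\|^{2}\le B^{2}$, and the hypothesis $\|\nabla f(\x^{\K-1})\|>\frac{4B}{\eta}$ gives $\|\nabla f(\x^{\K-1})\|^{2}>\frac{16B^{2}}{\eta^{2}}$; combined with $f(\x^{\K-1})-f(\x^{0})\le\frac{3B^{2}}{4\eta}$ this yields $f(\z^{\K})-f(\x^{0})\le-\frac{3B^{2}}{16\eta}$, in fact with room to spare, which is what permits the clean constants in the statement.

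The main obstacle is the absence of an auxiliary point at which the gradient is taken: in AGD, $\x^{k+1}=\y^{k}-\eta\nabla f(\y^{k})$ is a true gradient step and Lemma~\ref{lemma1} gets the $\frac{7\eta}{8}\|\nabla f(\y^{k})\|^{2}$ decrease essentially for free, whereas here the step mixes $\nabla f(\x^{k})$ with the momentum, so the descent has to be teased out by the splitting above. More delicately, one must check that after losing the factor $\frac{1}{1+\beta}$ from ending at $\z^{\K}$, the coefficient of $\|\nabla f(\x^{\K-1})\|^{2}$ is still at least $\frac{3}{16}\eta$ over the whole admissible range of $\theta$ — this elementary one-variable estimate, together with verifying that it is exactly the role played by the weight $(1-2\theta)(1-\theta)$, is the only nonroutine part; the rest repeats the bookkeeping of the proof of Lemma~\ref{lemma1}.
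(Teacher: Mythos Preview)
Your proof is correct and in fact slightly more direct than the paper's. Both proofs establish the same per-step inequality $f(\x^{k+1})-f(\x^{k})\le-\tfrac{\eta}{4}\|\nabla f(\x^{k})\|^{2}+\tfrac{3}{4\eta}\|\x^{k}-\x^{k-1}\|^{2}$ and telescope it, but then diverge on the last step. The paper sums both to $\K$ and to $\K-1$, introduces the auxiliary convex function $h(\x)=f(\x)+\tfrac{L}{2}\|\x-\x^{0}\|^{2}$, uses its convexity to get $f(\z^{\K})\le\alpha f(\x^{\K})+(1-\alpha)f(\x^{\K-1})+\tfrac{L\alpha(1-\alpha)}{2}\|\x^{\K}-\x^{\K-1}\|^{2}$, and then combines the two telescoped bounds. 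You instead sum only to $\K-1$ and handle the passage $\x^{\K-1}\to\z^{\K}$ by one more application of $L$-smoothness directly along that segment; this avoids the convex-surrogate trick entirely and keeps the argument to a single telescope plus one tailored descent step. Your route is more elementary, while the paper's convexity device is perhaps more reusable (it reappears implicitly in the structure of Lemma~\ref{lemma4hb}).

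One caveat: your parenthetical explanation of why $\z^{\K}$ is used is not right. You claim the full step $\x^{\K-1}\to\x^{\K}$ ``would not retain this margin,'' but your own calculation shows otherwise: setting $\beta=0$ (i.e.\ landing at $\x^{\K}$) gives $g(1)=\tfrac12-\tfrac14=\tfrac14>\tfrac{3}{16}$, so the large-gradient lemma would in fact be \emph{easier} with $\x^{\K}$ in place of $\z^{\K}$. The actual reason the algorithm restarts at $\z^{\K}$ is Lemma~\ref{lemma4hb} (the small-gradient case): there the telescope leaves both $g_j(\widetilde\x_j^{\K})$ and $g_j(\widetilde\x_j^{\K-1})$ on the left side with weights $1$ and $(1-2\theta)(1-\theta)$, and $\z^{\K}$ is the convex combination that collapses them. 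This is only commentary in your write-up, not a gap in the proof, but the attribution of purpose should be corrected.
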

\begin{proof}
As the gradient is $L$-Lipschitz, we have
\begin{eqnarray}
\begin{aligned}\notag
f(\x^{k+1})\leq& f(\x^{k})+\<\nabla f(\x^{k}),\x^{k+1}-\x^{k}\>+\frac{L}{2}\|\x^{k+1}-\x^{k}\|^2\\
\overset{a}=& f(\x^{k})+\<\nabla f(\x^{k}),(1-\theta)(\x^{k}-\x^{k-1})-\eta\nabla f(\x^k)\>+\frac{L}{2}\|(1-\theta)(\x^{k}-\x^{k-1})-\eta\nabla f(\x^k)\|^2\\
\leq& f(\x^{k})-\eta\|\nabla f(\x^{k})\|^2+\frac{\eta}{2}\|\nabla f(\x^k)\|^2+\frac{1}{2\eta}\|\x^{k}-\x^{k-1}\|^2+L\|\x^{k}-\x^{k-1}\|^2+L\eta^2\|\nabla f(\x^k)\|^2\\
\overset{b}\leq& f(\x^{k})-\frac{\eta}{4}\|\nabla f(\x^{k})\|^2+\frac{3}{4\eta}\|\x^{k}-\x^{k-1}\|^2,
\end{aligned}
\end{eqnarray}
where we use the heavy ball iteration on line 3 of Algorithm \ref{HB1} in $\overset{a}=$ and $\eta\leq\frac{1}{4L}$ in $\overset{b}\leq$. Summing over $k=0,\cdots,\K-1$ and using $\x^{0}=\x^{-1}$, we have
\begin{eqnarray}
\begin{aligned}\label{hbcont1}
f(\x^{\K})-f(\x^{0})\leq \frac{3}{4\eta}\sum_{k=0}^{\K-2}\|\x^{k+1}-\x^{k}\|^2-\frac{\eta}{4}\sum_{k=0}^{\K-1}\|\nabla f(\x^{k})\|^2\overset{c}\leq \frac{3B^2}{4\eta}-\frac{\eta}{4}\|\nabla f(\x^{\K-1})\|^2,
\end{aligned}
\end{eqnarray}
where we use (\ref{cond2}) in $\overset{c}\leq$. On the other hand, we also have
\begin{eqnarray}
\begin{aligned}\label{hbcont2}
f(\x^{\K-1})-f(\x^{0})\leq \frac{3}{4\eta}\sum_{k=0}^{\K-3}\|\x^{k+1}-\x^{k}\|^2-\frac{\eta}{4}\sum_{k=0}^{\K-2}\|\nabla f(\x^{k})\|^2\leq \frac{3B^2}{4\eta}.
\end{aligned}
\end{eqnarray}
Define $h(\x)=f(\x)+\frac{L}{2}\|\x-\x^0\|^2$. We know $h(\x)$ is convex since $\nabla^2 f(\x)\succeq-L\I$. Thus we have $h(\z^{\K})\leq \alpha h(\x^{\K})+(1-\alpha)h(\x^{\K-1})$ with $\alpha=\frac{1}{1+(1-2\theta)(1-\theta)}\in [\frac{1}{2},\frac{1}{1.72}]$ and $\z^{\K}=\alpha \x^{\K}+(1-\alpha)\x^{\K-1}$, which further yields
\begin{eqnarray}
\begin{aligned}\label{hbcont3}
f(\z^{\K})\leq& \alpha f(\x^{\K})+(1-\alpha)f(\x^{\K-1})+\frac{L\alpha}{2}\|\x^{\K}-\x^0\|^2+\frac{L(1-\alpha)}{2}\|\x^{\K-1}-\x^0\|^2\\
&-\frac{L}{2}\|\alpha (\x^{\K}-\x^0)+(1-\alpha)(\x^{\K-1}-\x^0)\|^2\\
\overset{d}=&\alpha f(\x^{\K})+(1-\alpha)f(\x^{\K-1})+\frac{L\alpha(1-\alpha)}{2}\|\x^{\K}-\x^{\K-1}\|^2\\
\leq&\alpha f(\x^{\K})+(1-\alpha)f(\x^{\K-1})+\frac{1}{32\eta}\|\x^{\K}-\x^{\K-1}\|^2\\
\leq&\alpha f(\x^{\K})+(1-\alpha)f(\x^{\K-1})+\frac{1}{16\eta}\|\x^{\K-1}-\x^{\K-2}\|^2+\frac{\eta}{16}\|\nabla f(\x^{\K-1})\|^2,
\end{aligned}
\end{eqnarray}
where we use $|\alpha x+(1-\alpha)y|^2=\alpha x^2+(1-\alpha)y^2-\alpha(1-\alpha)|x-y|^2$ in $\overset{d}=$. Plugging (\ref{hbcont1}) and (\ref{hbcont2}) into (\ref{hbcont3}) and using $\|\x^{\K-1}-\x^{\K-2}\|^2\leq B^2$, we have
\begin{eqnarray}
\begin{aligned}\notag
f(\z^{\K})-f(\x^{0})\leq&  \frac{3B^2}{4\eta}+\frac{B^2}{16\eta}-\frac{\eta\alpha}{4}\|\nabla f(\x^{\K-1})\|^2+\frac{\eta}{16}\|\nabla f(\x^{\K-1})\|^2\\
\leq&  \frac{13B^2}{16\eta}-\frac{\eta}{16}\|\nabla f(\x^{\K-1})\|^2\overset{e}\leq -\frac{3B^2}{16\eta},
\end{aligned}
\end{eqnarray}
where we use $\|\nabla f(\x^{\K-1})\|> \frac{4B}{\eta}$ in $\overset{e}\leq$.
\end{proof}

\subsubsection{Small Gradient of $\|\nabla f(\x^{\K-1})\|$}\label{subsec2hb}

If $\|\nabla f(\x^{\K-1})\|\leq \frac{4B}{\eta}$, then from the heavy ball iteration on line 3 of Algorithm \ref{HB1} and (\ref{cond2}) we have
\begin{eqnarray}
\begin{aligned}\notag
\|\x^{\K}-\x^{0}\|\leq \|\x^{\K-1}-\x^{0}\|+\eta\|\nabla f(\x^{\K-1})\|+(1-\theta)\|\x^{\K-1}-\x^{\K-2}\|\leq 6B.
\end{aligned}
\end{eqnarray}
Similar to (\ref{sec-smooth2}), using the definition of $g(\x)$ in (\ref{def_g}), we have
\begin{eqnarray}
\begin{aligned}\label{sec-smooth3}
f(\x^{\K})-f(\x^{0})\leq g(\widetilde\x^{\K})-g(\widetilde\x^{0})+36\rho B^3.
\end{aligned}
\end{eqnarray}
Denoting
\begin{eqnarray}
\begin{aligned}\notag
\widetilde\bdelta_j^{k}=\widetilde\nabla_j f(\x^{k})-\nabla g_j(\widetilde\x_j^{k}),\qquad\widetilde\bdelta^{k}=\widetilde\nabla f(\x^{k})-\nabla g(\widetilde\x^{k}),
\end{aligned}
\end{eqnarray}
then the heavy ball iteration in Algorithm \ref{HB1} can be rewritten as
\begin{eqnarray}
\begin{aligned}\label{stephb}
\widetilde\x_j^{k+1}=&\widetilde\x_j^{k}-\eta\widetilde\nabla_j f(\x^{k})+(1-\theta)(\widetilde\x_j^{k}-\widetilde\x_j^{k-1})\\
=&\widetilde\x_j^{k}-\eta\nabla g_j(\widetilde\x_j^{k})-\eta\widetilde\bdelta_j^{k}+(1-\theta)(\widetilde\x_j^{k}-\widetilde\x_j^{k-1}).
\end{aligned}
\end{eqnarray}
Similar to (\ref{sec-smooth}), $\|\widetilde\bdelta^{k}\|$ can also be bounded as
\begin{eqnarray}
\begin{aligned}\label{sec-smooth-hb}
\|\widetilde\bdelta^{k}\|\leq\frac{\rho}{2}\|\x^{k}-\x^{0}\|^2\leq \frac{\rho B^2}{2}
\end{aligned}
\end{eqnarray}
for any $k<\K$.

\begin{lemma}\label{lemma4hb}
Suppose that Assumption \ref{assump} holds. Let $\eta\leq\frac{1}{4L}$ and $0\leq\theta\leq \frac{1}{10}$. In each epoch of Algorithm \ref{HB1} where the ``if condition" triggers, when $\|\nabla f(\x^{\K-1})\|\leq\frac{4B}{\eta}$, we have
\begin{eqnarray}
\begin{aligned}\notag
f(\z^{\K})-f(\x^{0})\leq -\frac{9\theta B^2}{40\eta\K}+\frac{\eta\rho^2B^4\K}{4\theta}+36\rho B^3.
\end{aligned}
\end{eqnarray}
\end{lemma}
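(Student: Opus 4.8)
The plan is to run the restarted-AGD programme of Lemmas \ref{lemma2}--\ref{lemma4} again, adapted to the two features of heavy ball: the gradient is taken at $\x^{k}$ rather than at an extrapolated point, and the quantity to be controlled is $f(\z^{\K})$ for the prescribed combination $\z^{\K}=\alpha\x^{\K}+(1-\alpha)\x^{\K-1}$ with $\alpha=\frac{1}{1+(1-2\theta)(1-\theta)}\in[\frac12,\frac{1}{1.72}]$. First I would reduce to the quadratic model as before. The bound $\|\x^{\K}-\x^{0}\|\le 6B$ (proved just before (\ref{sec-smooth3})) and $\|\x^{\K-1}-\x^{0}\|\le B$ from (\ref{cond2}) give $\|\z^{\K}-\x^{0}\|\le 6B$, so applying (\ref{h_smooth1}) at $\x^{0},\z^{\K}$ and rotating into the eigenbasis of $\H=\nabla^{2}f(\x^{0})$ as in (\ref{sec-smooth3}) yields $f(\z^{\K})-f(\x^{0})\le g(\widetilde\z^{\K})-g(\widetilde\x^{0})+36\rho B^{3}$. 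Since $g=\sum_{j}g_{j}$ is quadratic with $\widetilde\z^{\K}_{j}=\alpha\widetilde\x^{\K}_{j}+(1-\alpha)\widetilde\x^{\K-1}_{j}$, applying $g_{j}(\alpha a+(1-\alpha)b)=\alpha g_{j}(a)+(1-\alpha)g_{j}(b)-\frac{\alpha(1-\alpha)\blambda_{j}}{2}(a-b)^{2}$ coordinatewise gives
\[
g(\widetilde\z^{\K})-g(\widetilde\x^{0})=\alpha\bigl(g(\widetilde\x^{\K})-g(\widetilde\x^{0})\bigr)+(1-\alpha)\bigl(g(\widetilde\x^{\K-1})-g(\widetilde\x^{0})\bigr)-\frac{\alpha(1-\alpha)}{2}\sum_{j}\blambda_{j}\bigl(\widetilde\x^{\K}_{j}-\widetilde\x^{\K-1}_{j}\bigr)^{2},
\]
so it suffices to bound $g(\widetilde\x^{\K})-g(\widetilde\x^{0})$ and $g(\widetilde\x^{\K-1})-g(\widetilde\x^{0})$ and to control the concavity-correction term.

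Next I would estimate the two quantities coordinatewise along the model recursion (\ref{stephb}), using the split $\S_{1}=\{j:\blambda_{j}\ge-\frac\theta\eta\}$, $\S_{2}=\{j:\blambda_{j}<-\frac\theta\eta\}$ of Section \ref{subsec2} and $\|\widetilde\bdelta^{k}\|\le\frac{\rho B^{2}}{2}$ from (\ref{sec-smooth-hb}). For $j\in\S_{2}$ the proof of Lemma \ref{lemma3} transfers almost verbatim: write $g_{j}(x)=\frac{\blambda_{j}}{2}(x-\v_{j})^{2}+\mathrm{const}$ with $\v_{j}=\widetilde\x^{0}_{j}-\frac{1}{\blambda_{j}}\widetilde\nabla_{j}f(\x^{0})$, keep the full contraction $-\frac{|\blambda_{j}|}{2}(\widetilde\x^{k+1}_{j}-\widetilde\x^{k}_{j})^{2}$ coming from strong concavity, and unroll $\widetilde\x^{k+1}_{j}-\widetilde\x^{k}_{j}$ through (\ref{stephb}) so that the cross-terms $\blambda_{j}\langle\widetilde\x^{k+1}_{j}-\widetilde\x^{k}_{j},\widetilde\x^{k}_{j}-\v_{j}\rangle$ telescope to a nonpositive remainder (its leading term, via $\widetilde\x^{1}_{j}-\widetilde\x^{0}_{j}=-\eta\blambda_{j}(\widetilde\x^{0}_{j}-\v_{j})$, which holds since $\x^{-1}=\x^{0}$, is $-(1-\theta)^{k}\eta\blambda_{j}^{2}(\widetilde\x^{0}_{j}-\v_{j})^{2}$) plus $\frac\eta2\sum_{t=1}^{k}(1-\theta)^{k-t}(\widetilde\bdelta^{t}_{j})^{2}$. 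For $j\in\S_{1}$ I would work with a heavy ball potential $\bell^{k}_{j}=g_{j}(\widetilde\x^{k}_{j})+\frac{c}{2\eta}(\widetilde\x^{k}_{j}-\widetilde\x^{k-1}_{j})^{2}$ for an absolute constant $c$, substitute $\nabla g_{j}(\widetilde\x^{k}_{j})=\frac1\eta\bigl[(1-\theta)(\widetilde\x^{k}_{j}-\widetilde\x^{k-1}_{j})-(\widetilde\x^{k+1}_{j}-\widetilde\x^{k}_{j})\bigr]-\widetilde\bdelta^{k}_{j}$ into the quadratic expansion of $g_{j}$, and balance — using $\blambda_{j}\le L=\frac{1}{4\eta}$ and $\theta\le\frac{1}{10}$ — the momentum cross-term and the $\widetilde\bdelta^{k}_{j}$ term by Young's inequality to reach $\bell^{k+1}_{j}\le\bell^{k}_{j}-\frac{a\theta}\eta(\widetilde\x^{k+1}_{j}-\widetilde\x^{k}_{j})^{2}+\frac{b\eta}\theta(\widetilde\bdelta^{k}_{j})^{2}$ for absolute constants $a,b>0$.

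Then I would telescope both families over $k=0,\dots,\K-1$ (valid since $\x^{-1}=\x^{0}$), substitute into the Step-1 identity, and bound every $\widetilde\bdelta$-error using $\|\widetilde\bdelta^{k}\|\le\frac{\rho B^{2}}2$ and $\sum_{t=1}^{k}(1-\theta)^{k-t}\le\frac1\theta$, so that all such terms collect into $\bO(\frac{\eta\rho^{2}B^{4}\K}\theta)$. The $\S_{1}$ telescoping retains, weighted by $\alpha$ in the identity, a term $-\alpha\frac{c}{2\eta}\|\widetilde\x^{\K}_{\S_{1}}-\widetilde\x^{\K-1}_{\S_{1}}\|^{2}$, and the $\S_{2}$ one retains $-\alpha\sum_{j\in\S_{2}}\frac{|\blambda_{j}|}{2}(\widetilde\x^{\K}_{j}-\widetilde\x^{\K-1}_{j})^{2}$; these exactly dominate the concavity correction, because on $\{j\in\S_{1}:\blambda_{j}<0\}$ one has $|\blambda_{j}|\le\frac\theta\eta$ so the correction is $\le\frac\theta{8\eta}(\cdot)^{2}$, on $\S_{2}$ one has $\alpha(1-\alpha)\le\alpha$, and on $\{j\in\S_{1}:\blambda_{j}\ge0\}$ the correction is already $\le0$ — this is precisely why the output must be the weighted combination $\z^{k}$ rather than $\x^{k}$. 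What survives is $g(\widetilde\z^{\K})-g(\widetilde\x^{0})\le-\kappa\frac\theta\eta\sum_{k=0}^{\K-1}\|\widetilde\x^{k+1}-\widetilde\x^{k}\|^{2}+\bO(\frac{\eta\rho^{2}B^{4}\K}\theta)$ for an absolute constant $\kappa$; invoking (\ref{cond1}) as $\sum_{k=0}^{\K-1}\|\x^{k+1}-\x^{k}\|^{2}>B^{2}/\K$, tracking the value of $\kappa$ (with the worst case $\alpha=\frac12$) and of the constants inside $\bO(\cdot)$, and adding the $36\rho B^{3}$ from Step 1, yields $f(\z^{\K})-f(\x^{0})\le-\frac{9\theta B^{2}}{40\eta\K}+\frac{\eta\rho^{2}B^{4}\K}{4\theta}+36\rho B^{3}$.

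I expect the main obstacle to be the $\S_{1}$ one-step estimate. Heavy ball has no extrapolated point at which the gradient is evaluated, so the cancellation ``$(-\frac1{2\eta}+\frac{\blambda_{j}}2)|\widetilde\x^{k+1}_{j}-\widetilde\y^{k}_{j}|^{2}\le0$'' that drives Lemma \ref{lemma2} is unavailable; the coefficient of $(\widetilde\x^{k+1}_{j}-\widetilde\x^{k}_{j})^{2}$ in the potential descent is delicate precisely when $\blambda_{j}$ is close to $L$, so the momentum cross-term and the step-size term must be balanced very tightly, which is why $\theta$ is pushed down to $\frac1{10}$ and why the leading constant degrades from $\frac38$ (AGD) to $\frac9{40}$ (HB). A secondary subtlety is that outputting $\z^{\K}$ is essential: the weight $\alpha$ is calibrated so that the momentum and curvature penalties retained from the telescoping neutralize the negative-curvature corrections created by passing from $\x^{\K},\x^{\K-1}$ to their combination — this is the role of the auxiliary vector $\z^{k}$ in Algorithm \ref{HB1}.
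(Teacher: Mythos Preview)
Your proposal follows the AGD template of Lemmas \ref{lemma2}--\ref{lemma3} (split into $\S_{1}$ and $\S_{2}$, run a potential argument on $\S_{1}$, a telescoping argument on $\S_{2}$). The paper takes a genuinely different route: it does \emph{not} split into $\S_{1}/\S_{2}$ at all (this is precisely the content of Remark \ref{proof-comp}), and this is not a stylistic choice but an essential one, because your $\S_{1}$ potential step does not close.

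Here is the obstruction. With $u=\widetilde\x^{k+1}_{j}-\widetilde\x^{k}_{j}$, $v=\widetilde\x^{k}_{j}-\widetilde\x^{k-1}_{j}$, and $\nabla g_{j}(\widetilde\x^{k}_{j})=\frac{1}{\eta}[(1-\theta)v-u]-\widetilde\bdelta^{k}_{j}$ substituted into the quadratic expansion, the one-step change of your potential $\bell^{k}_{j}=g_{j}(\widetilde\x^{k}_{j})+\frac{c}{2\eta}v^{2}$ is (ignoring the $\widetilde\bdelta$-term)
\[
\bell^{k+1}_{j}-\bell^{k}_{j}=\Bigl(\tfrac{\blambda_{j}}{2}-\tfrac{1}{\eta}+\tfrac{c}{2\eta}\Bigr)u^{2}+\tfrac{1-\theta}{\eta}uv-\tfrac{c}{2\eta}v^{2}.
\]
For this quadratic form to be $\le -\frac{a\theta}{\eta}u^{2}$ for all $u,v$ you need, at the worst eigenvalue $\blambda_{j}=L=\frac{1}{4\eta}$, the discriminant condition $2c\bigl(\tfrac{7}{8}-\tfrac{c}{2}-a\theta\bigr)\ge(1-\theta)^{2}$. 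But $\max_{c>0}2c(\tfrac{7}{8}-\tfrac{c}{2})=\tfrac{49}{64}\approx0.766$, while $(1-\theta)^{2}\ge0.81$ for every $\theta\le\frac{1}{10}$. So even with $a=0$ the potential is not monotone; no absolute constant $c$ works, and ``pushing $\theta$ down to $\frac{1}{10}$'' makes the failure worse rather than better. The AGD cancellation you correctly flag as unavailable is exactly what rescued Lemma \ref{lemma2}; without it the $\S_{1}$ potential approach collapses when $\blambda_{j}$ is near $L$.

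The paper sidesteps this entirely. Instead of trying to bound the cross-term $\langle\widetilde\x^{k}_{j}-\widetilde\x^{k-1}_{j},\nabla g_{j}(\widetilde\x^{k}_{j})\rangle$, it rewrites it via the quadratic identity as $g_{j}(\widetilde\x^{k}_{j})-g_{j}(\widetilde\x^{k-1}_{j})$ plus a curvature term, and \emph{moves it to the left-hand side}. Concretely, combining (\ref{cont3hb}) with $\frac{(1-\theta)^{2}}{\eta}$ times the squared recursion (\ref{cont4hb}) produces
\[
\bigl[g_{j}(\widetilde\x^{k+1}_{j})-g_{j}(\widetilde\x^{k}_{j})\bigr]+(1-2\theta)(1-\theta)\bigl[g_{j}(\widetilde\x^{k}_{j})-g_{j}(\widetilde\x^{k-1}_{j})\bigr]\le\text{(telescoping RHS)},
\]
so after summing in $k$ the left side is exactly $\frac{1}{\alpha}$ times the combination $\alpha(g(\widetilde\x^{\K})-g(\widetilde\x^{0}))+(1-\alpha)(g(\widetilde\x^{\K-1})-g(\widetilde\x^{0}))$ that you identified in Step 1. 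The weight $(1-2\theta)(1-\theta)$ in the definition of $\z^{k}$ is not ``calibrated'' after the fact --- it is forced by this algebraic manoeuvre, and that is the real reason $\z^{k}$ appears in Algorithm \ref{HB1}. This unified argument works for all $\blambda_{j}\in[-L,L]$ simultaneously, and the constants $\frac{9}{40}$ and $\frac{1}{4}$ drop out directly.
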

\begin{proof}
Since $g_j(x)$ is quadratic, we have
\begin{eqnarray}
\begin{aligned}\label{cont1hb}
g_j(\widetilde\x_j^{k+1})-g_j(\widetilde\x_j^{k})=&\frac{\blambda_j}{2}|\widetilde\x_j^{k+1}-\widetilde\x_j^{0}|^2-\frac{\blambda_j}{2}|\widetilde\x_j^{k}-\widetilde\x_j^{0}|^2+\<\widetilde\nabla_j f(\x^{0}),\widetilde\x_j^{k+1}-\widetilde\x_j^{k}\>\\
=&\frac{\blambda_j}{2}|\widetilde\x_j^{k+1}-\widetilde\x_j^{k}|^2+\<\widetilde\x_j^{k+1}-\widetilde\x_j^{k},\blambda_j(\widetilde\x_j^{k}-\widetilde\x_j^{0})+\widetilde\nabla_j f(\x^{0})\>\\
=&\frac{\blambda_j}{2}|\widetilde\x_j^{k+1}-\widetilde\x_j^{k}|^2+\<\widetilde\x_j^{k+1}-\widetilde\x_j^{k},\nabla g_j(\widetilde\x_j^{k})\>.
\end{aligned}
\end{eqnarray}
From (\ref{stephb}), we have
\begin{eqnarray}
\begin{aligned}\notag
&\<\widetilde\x_j^{k+1}-\widetilde\x_j^{k},\nabla g_j(\widetilde\x_j^{k})\>\\
=&(1-\theta)\<\widetilde\x_j^{k}-\widetilde\x_j^{k-1},\nabla g_j(\widetilde\x_j^{k})\>-\eta|\nabla g_j(\widetilde\x_j^{k})|^2-\eta\<\widetilde\bdelta_j^{k},\nabla g_j(\widetilde\x_j^{k})\>\\
\leq&(1-\theta)\<\widetilde\x_j^{k}-\widetilde\x_j^{k-1},\nabla g_j(\widetilde\x_j^{k})\>-\eta|\nabla g_j(\widetilde\x_j^{k})|^2+\frac{\eta}{4\theta}|\widetilde\bdelta_j^{k}|^2+\theta\eta|\nabla g_j(\widetilde\x_j^{k})|^2\\
=&(1-\theta)\<\widetilde\x_j^{k}-\widetilde\x_j^{k-1},\nabla g_j(\widetilde\x_j^{k})\>-(1-\theta)\eta|\nabla g_j(\widetilde\x_j^{k})|^2+\frac{\eta}{4\theta}|\widetilde\bdelta_j^{k}|^2.
\end{aligned}
\end{eqnarray}
Plugging into (\ref{cont1hb}), we have
\begin{eqnarray}
\begin{aligned}\label{cont3hb}
g_j(\widetilde\x_j^{k+1})-g_j(\widetilde\x_j^{k})\leq&\frac{\blambda_j}{2}|\widetilde\x_j^{k+1}-\widetilde\x_j^{k}|^2+(1-\theta)\<\widetilde\x_j^{k}-\widetilde\x_j^{k-1},\nabla g_j(\widetilde\x_j^{k})\>\\
&-(1-\theta)\eta|\nabla g_j(\widetilde\x_j^{k})|^2+\frac{\eta}{4\theta}|\widetilde\bdelta_j^{k}|^2.
\end{aligned}
\end{eqnarray}
Rearranging and squaring both sides of (\ref{stephb}) and using $(a+b)^2\leq(1+\frac{\theta}{1-\theta})a^2+(1+\frac{1-\theta}{\theta})b^2=\frac{a^2}{1-\theta}+\frac{b^2}{\theta}$, we have
\begin{eqnarray}
\begin{aligned}\label{cont4hb}
|\widetilde\x_j^{k+1}-\widetilde\x_j^{k}|^2\leq&\frac{1}{1-\theta}\left|(1-\theta)(\widetilde\x_j^{k}-\widetilde\x_j^{k-1})-\eta\nabla g_j(\widetilde\x_j^{k})\right|^2+\frac{\eta^2|\widetilde\bdelta_j^{k}|^2}{\theta}\\
=&(1-\theta)|\widetilde\x_j^{k}-\widetilde\x_j^{k-1}|^2+\frac{\eta^2}{1-\theta}|\nabla g_j(\widetilde\x_j^{k})|^2-2\eta\<\widetilde\x_j^{k}-\widetilde\x_j^{k-1},\nabla g_j(\widetilde\x_j^{k})\>+\frac{\eta^2|\widetilde\bdelta_j^{k}|^2}{\theta}.
\end{aligned}
\end{eqnarray}
Multiplying both sides of (\ref{cont4hb}) by $\frac{(1-\theta)^2}{\eta}$, adding it to (\ref{cont3hb}), and rearranging the terms, we have
\begin{eqnarray}
\begin{aligned}\notag
&g_j(\widetilde\x_j^{k+1})-g_j(\widetilde\x_j^{k})\\
\leq&-\left(\frac{(1-\theta)^2}{\eta}-\frac{\blambda_j}{2}\right)|\widetilde\x_j^{k+1}-\widetilde\x_j^{k}|^2+\frac{(1-\theta)^3}{\eta}|\widetilde\x_j^{k}-\widetilde\x_j^{k-1}|^2\\
&-(1-2\theta)(1-\theta)\<\widetilde\x_j^{k}-\widetilde\x_j^{k-1},\nabla g_j(\widetilde\x_j^{k})\>+\left(\frac{\eta}{4\theta}+\frac{\eta(1-\theta)^2}{\theta}\right)|\widetilde\bdelta_j^{k}|^2\\
=&-\left(\frac{(1-\theta)^2}{\eta}-\frac{\blambda_j}{2}\right)|\widetilde\x_j^{k+1}-\widetilde\x_j^{k}|^2+\frac{(1-\theta)^3}{\eta}|\widetilde\x_j^{k}-\widetilde\x_j^{k-1}|^2\\
&-(1-2\theta)(1-\theta)\<\widetilde\x_j^{k}-\widetilde\x_j^{k-1},\blambda_j(\widetilde\x_j^{k}-\widetilde\x_j^{0})+\widetilde\nabla_j f(\x^{0})\>+\left(\frac{\eta}{4\theta}+\frac{\eta(1-\theta)^2}{\theta}\right)|\widetilde\bdelta_j^{k}|^2
\end{aligned}
\end{eqnarray}
\begin{eqnarray}
\hspace*{1.8cm}\begin{aligned}\notag
\leq&-\left(\frac{(1-\theta)^2}{\eta}-\frac{\blambda_j}{2}\right)|\widetilde\x_j^{k+1}-\widetilde\x_j^{k}|^2+\frac{(1-\theta)^3}{\eta}|\widetilde\x_j^{k}-\widetilde\x_j^{k-1}|^2+\frac{5\eta}{4\theta}|\widetilde\bdelta_j^{k}|^2\\
&-(1-2\theta)(1-\theta)\left(\frac{\blambda_j}{2}|\widetilde\x_j^{k}-\widetilde\x_j^{k-1}|^2+\frac{\blambda_j}{2}|\widetilde\x_j^{k}-\widetilde\x_j^{0}|^2-\frac{\blambda_j}{2}|\widetilde\x_j^{k-1}-\widetilde\x_j^{0}|^2+\<\widetilde\x_j^{k}-\widetilde\x_j^{k-1},\widetilde\nabla_j f(\x^{0})\>\right)\\
=&-\left(\frac{(1-\theta)^2}{\eta}-\frac{\blambda_j}{2}\right)|\widetilde\x_j^{k+1}-\widetilde\x_j^{k}|^2+\left(\frac{(1-\theta)^3}{\eta}-\frac{(1-2\theta)(1-\theta)\blambda_j}{2}\right)|\widetilde\x_j^{k}-\widetilde\x_j^{k-1}|^2\\
&+\frac{5\eta}{4\theta}|\widetilde\bdelta_j^{k}|^2-(1-2\theta)(1-\theta)\left(g_j(\widetilde\x_j^{k})-g_j(\widetilde\x_j^{k-1})\right).
\end{aligned}
\end{eqnarray}
Note that
\begin{eqnarray}
\begin{aligned}\notag
&\frac{(1-\theta)^2}{\eta}-\frac{\blambda_j}{2}-\frac{(1-\theta)^3}{\eta}+\frac{(1-2\theta)(1-\theta)\blambda_j}{2}\\
=&\frac{(1-\theta)^2\theta}{\eta}-\frac{\blambda_j(3\theta-2\theta^2)}{2}\overset{a}\geq\frac{(1-\theta)^2\theta}{\eta}-\frac{3\theta-2\theta^2}{8\eta}\overset{b}\geq\frac{9\theta}{20\eta},
\end{aligned}
\end{eqnarray}
where we use $\blambda_j\leq L=\frac{1}{4\eta}$ in $\overset{a}\geq$ and $\theta\leq \frac{1}{10}$ in $\overset{b}\geq$. So we have
\begin{eqnarray}
\begin{aligned}\notag
&g_j(\widetilde\x_j^{k+1})-g_j(\widetilde\x_j^{k})+(1-2\theta)(1-\theta)\left(g_j(\widetilde\x_j^{k})-g_j(\widetilde\x_j^{k-1})\right)\\
\leq&-\left(\frac{(1-\theta)^3}{\eta}-\frac{(1-2\theta)(1-\theta)\blambda_j}{2}+\frac{9\theta}{20\eta}\right)|\widetilde\x_j^{k+1}-\widetilde\x_j^{k}|^2\\
&+\left(\frac{(1-\theta)^3}{\eta}-\frac{(1-2\theta)(1-\theta)\blambda_j}{2}\right)|\widetilde\x_j^{k}-\widetilde\x_j^{k-1}|^2+\frac{5\eta}{4\theta}|\widetilde\bdelta_j^{k}|^2.
\end{aligned}
\end{eqnarray}
Summing over $k=0,1,\cdots,\K-1$ and using $\x^0=\x^{-1}$, we have
\begin{eqnarray}
\begin{aligned}\label{cont7hb}
&g_j(\widetilde\x_j^{\K})-g_j(\widetilde\x_j^{0})+(1-2\theta)(1-\theta)\left(g_j(\widetilde\x_j^{\K-1})-g_j(\widetilde\x_j^{0})\right)\\
\leq&-\left(\frac{(1-\theta)^3}{\eta}-\frac{(1-2\theta)(1-\theta)\blambda_j}{2}\right)|\widetilde\x_j^{\K}-\widetilde\x_j^{\K-1}|^2-\frac{9\theta}{20\eta}\sum_{k=0}^{\K-1}|\widetilde\x_j^{k+1}-\widetilde\x_j^{k}|^2+\frac{5\eta}{4\theta}\sum_{k=0}^{\K-1}|\widetilde\bdelta_j^{k}|^2.
\end{aligned}
\end{eqnarray}
Denoting $\alpha=\frac{1}{1+(1-2\theta)(1-\theta)}\in [\frac{1}{2},\frac{1}{1.72}]$ and multiplying both sides of (\ref{cont7hb}) by $\alpha$, we have
\begin{eqnarray}
\begin{aligned}\label{cont8hb}
&\alpha\left(g_j(\widetilde\x_j^{\K})-g_j(\widetilde\x_j^{0})\right)+(1-\alpha)\left(g_j(\widetilde\x_j^{\K-1})-g_j(\widetilde\x_j^{0})\right)\\
\leq&-\alpha\left(\frac{(1-\theta)^3}{\eta}-\frac{(1-2\theta)(1-\theta)\blambda_j}{2}\right)|\widetilde\x_j^{\K}-\widetilde\x_j^{\K-1}|^2-\frac{9\alpha\theta}{20\eta}\sum_{k=0}^{\K-1}|\widetilde\x_j^{k+1}-\widetilde\x_j^{k}|^2+\frac{5\alpha\eta}{4\theta}\sum_{k=0}^{\K-1}|\widetilde\bdelta_j^{k}|^2\\
\leq&-\frac{1}{2}\left(\frac{(1-\theta)^3}{\eta}-\frac{(1-2\theta)(1-\theta)\blambda_j}{2}\right)|\widetilde\x_j^{\K}-\widetilde\x_j^{\K-1}|^2-\frac{9\theta}{40\eta}\sum_{k=0}^{\K-1}|\widetilde\x_j^{k+1}-\widetilde\x_j^{k}|^2+\frac{\eta}{\theta}\sum_{k=0}^{\K-1}|\widetilde\bdelta_j^{k}|^2,
\end{aligned}
\end{eqnarray}
where we use (\ref{hbcont4}) in the last inequality. On the other hand, from $\z^{\K}=\frac{\x^{\K}+(1-2\theta)(1-\theta)\x^{\K-1}}{1+(1-2\theta)(1-\theta)}=\alpha\x^{\K}+(1-\alpha)\x^{\K-1}$, we have
\begin{eqnarray}
\begin{aligned}\notag
&g_j(\widetilde\z_j^{\K})-g_j(\widetilde\x_j^{0})\\
=&\frac{\blambda_j}{2}\left|\alpha(\widetilde\x_j^{\K}-\widetilde\x_j^{0})+(1-\alpha)(\widetilde\x_j^{\K-1}-\widetilde\x_j^{0})\right|^2+\alpha\<\widetilde\nabla_j f(\x^{0}),\widetilde\x_j^{\K}-\widetilde\x_j^0\>\\
&+(1-\alpha)\<\widetilde\nabla_j f(\x^{0}),\widetilde\x_j^{\K-1}-\widetilde\x_j^0\>\\
\overset{c}=&\frac{\blambda_j\alpha}{2}|\widetilde\x_j^{\K}-\widetilde\x_j^0|^2+\frac{\blambda_j(1-\alpha)}{2}|\widetilde\x_j^{\K-1}-\widetilde\x_j^0|^2-\frac{\blambda_j\alpha(1-\alpha)}{2}|\widetilde\x_j^{\K}-\widetilde\x_j^{\K-1}|^2\\
&+\alpha\<\widetilde\nabla_j f(\x^{0}),\widetilde\x_j^{\K}-\widetilde\x_j^0\>+(1-\alpha)\<\widetilde\nabla_j f(\x^{0}),\widetilde\x_j^{\K-1}-\widetilde\x_j^0\>\\
=&\alpha \left(g_j(\widetilde\x_j^{\K})-g_j(\widetilde\x_j^{0})\right)+(1-\alpha)\left(g_j(\widetilde\x_j^{\K-1})-g_j(\widetilde\x_j^{0})\right)-\frac{\blambda_j\alpha(1-\alpha)}{2}|\widetilde\x_j^{\K}-\widetilde\x_j^{\K-1}|^2\\
\overset{d}\leq&\alpha \left(g_j(\widetilde\x_j^{\K})-g_j(\widetilde\x_j^{0})\right)+(1-\alpha)\left(g_j(\widetilde\x_j^{\K-1})-g_j(\widetilde\x_j^{0})\right)+\frac{1}{32\eta}|\widetilde\x_j^{\K}-\widetilde\x_j^{\K-1}|^2\\
\overset{e}\leq&-\frac{1}{2}\left(\frac{(1-\theta)^3}{\eta}-\frac{(1-2\theta)(1-\theta)\blambda_j}{2}-\frac{1}{16\eta}\right)|\widetilde\x_j^{\K}-\widetilde\x_j^{\K-1}|^2-\frac{9\theta}{40\eta}\sum_{k=0}^{\K-1}|\widetilde\x_j^{k+1}-\widetilde\x_j^{k}|^2+\frac{\eta}{\theta}\sum_{k=0}^{\K-1}|\widetilde\bdelta_j^{k}|^2\\
\overset{f}\leq&-\frac{9\theta}{40\eta}\sum_{k=0}^{\K-1}|\widetilde\x_j^{k+1}-\widetilde\x_j^{k}|^2+\frac{\eta}{\theta}\sum_{k=0}^{\K-1}|\widetilde\bdelta_j^{k}|^2,
\end{aligned}
\end{eqnarray}
where we use $|\alpha x+(1-\alpha)y|^2=\alpha x^2+(1-\alpha)y^2-\alpha(1-\alpha)|x-y|^2$ in $\overset{c}\leq$, $\blambda_j\geq-L=-\frac{1}{4\eta}$ in $\overset{d}\leq$, (\ref{cont8hb}) in $\overset{e}\leq$, and
\begin{eqnarray}
\begin{aligned}\label{hbcont4}
\frac{(1-\theta)^3}{\eta}-\frac{(1-2\theta)(1-\theta)\blambda_j}{2}-\frac{1}{16\eta}\geq\frac{(1-\theta)^3}{\eta}-\frac{1}{8\eta}-\frac{1}{16\eta}\geq 0
\end{aligned}
\end{eqnarray}
with $\theta\in[0,\frac{1}{10}]$ in $\overset{f}\leq$. Summing over $j$, using (\ref{sec-smooth-hb}) and (\ref{cond1}), we have
\begin{eqnarray}
\begin{aligned}\notag
g(\widetilde\z^{\K})-g(\widetilde\x^{0})=\sum_jg_j(\widetilde\z_j^{\K})-g_j(\widetilde\x_j^{0})\leq&-\frac{9\theta}{40\eta}\sum_{k=0}^{\K-1}\|\widetilde\x^{k+1}-\widetilde\x^{k}\|^2+\frac{\eta\rho^2B^4\K}{4\theta}\\
\leq&-\frac{9\theta B^2}{40\eta\K}+\frac{\eta\rho^2B^4\K}{4\theta}.
\end{aligned}
\end{eqnarray}
Plugging into (\ref{sec-smooth3}), we have the conclusion.
\end{proof}

\begin{remark}\label{proof-comp}
Comparing with the proofs in Lemmas \ref{lemma2} and \ref{lemma3}, we do not divide the eigenvalues into two groups in the proof of Lemma \ref{lemma4hb}. However, the bad thing is that $g_j(\widetilde\x_j^{\K})-g_j(\widetilde\x_j^{0})$ and $g_j(\widetilde\x_j^{\K-1})-g_j(\widetilde\x_j^{0})$ appear simultaneously on the left hand side of (\ref{cont7hb}). This is the reason why we introduce the vector $\z^k$ in Algorithm \ref{HB1}.
\end{remark}
Combing Lemmas \ref{lemma1hb} and \ref{lemma4hb}, similar to Corollary \ref{lemma5}, we have
\begin{eqnarray}
\begin{aligned}\notag
f(\z^{\K})-f(\x^{0})\leq -\frac{\epsilon^{3/2}}{\sqrt{\rho}}.
\end{aligned}
\end{eqnarray}
Similar to Lemma \ref{lemma6}, we also have $\|\nabla f(\hat\x)\|\leq \frac{2\sqrt{2}B}{\eta K^2}+\frac{2\theta B}{\eta K}+\rho B^2\leq242\epsilon$ in the last epoch. Using the same proofs of Theorem \ref{theorem1} at the end of Section \ref{subsec3}, we can prove Theorem \ref{theorem1hb}.

\section{Experiments}

We test the practical performance on the matrix completion problem \citep{Negahban-2012,Hardt-2014-focs} and one bit matrix completion problem \citep{Davenport-2014}, and end this section by discussing the gap between theory and practice.

\subsection{Matrix completion}\label{exp_sec1}

In matrix completion \citep{Negahban-2012,Hardt-2014-focs}, we aim to recover the true low rank matrix from a set of randomly observed entries, which can be formulated as follows:
\begin{eqnarray}
\min_{\X\in\mathbb{R}^{m\times n}} \frac{1}{2N}\sum_{(i,j)\in\mathcal O}(\X_{i,j}-\X_{i,j}^*)^2,\quad s.t.\quad \mbox{rank}(\X)\leq r,\notag
\end{eqnarray}
where $\mathcal O$ is the set of randomly observed entries with size $N$ and $\X^*$ is the true low rank matrix to recover. We reformulate the above problem in the following matrix factorization form:
\begin{eqnarray}
\begin{aligned}\notag
\min_{\U\in\mathbb{R}^{m\times r},\V\in\mathbb{R}^{n\times r}} &\frac{1}{2N}\sum_{(i,j)\in\mathcal O}((\U\V^T)_{i,j}-\X_{i,j}^*)^2+\frac{1}{2N}\|\U^T\U-\V^T\V\|_F^2,
\end{aligned}
\end{eqnarray}
where $r$ is the rank of $\X^*$ and the regularization is used to balance $\U$ and $\V$.

We verify the performance on the Movielens-10M, Movielens-20M and Netflix data sets, where the corresponding observed matrices are of size $69878\times 10677$, $138493\times 26744$, and $480189\times 17770$, respectively. We set $r=10$. Denote $\X_{\mathcal O}$ to be the observed data and $\A\Sigma\mathbf{B}^T$ to be its SVD. We initialize $\U=\A_{:,1:r}\sqrt{\Sigma_{1:r,1:r}}$ and $\V=\mathbf{B}_{:,1:r}\sqrt{\Sigma_{1:r,1:r}}$ for all the compared methods. It is efficient to compute the maximal $r$ singular values and the corresponding singular vectors of sparse matrices, for example, by Lanczos.

We compare Ada-RAGD-NC (Algorithm \ref{AGD1p}) and Ada-RHB-NC (Algorithm \ref{HB1p}) with Jin's AGD \citep{jinchi-18-apg}, the ``convex until proven guilty" method \citep{carmon-2017-guilty}, heuristic restarted AGD \citep{Donoghue-2015-NesRestart}, nonlinear conjugate gradient (CG) \citep{Polak-CG1969}, and gradient descent (GD). We do not compare with RAGD-NC (Algorithm \ref{AGD1}) and RHB-NC (Algorithm \ref{HB1}) because the two methods restart at almost every iteration due to the small hyperparameter $B$. Their performance is almost the same as GD and their plots almost coincide with that of GD. Heuristic RAGD consists of the following iterations
\begin{eqnarray}
\begin{aligned}\notag
\x^{k+1}=\y^k-\eta\nabla f(\y^k),\quad \y^{k+1}=\x^{k+1}+\frac{m^{k+1}-1}{m^{k+1}+2}(\x^{k+1}-\x^k),
\end{aligned}
\end{eqnarray}
where $m^0=1$ and
\begin{eqnarray}
\begin{aligned}\notag
\quad m^{k+1}=\left\{\begin{array}{cl}
    m^k+1, & \mbox{if } f(\x^{k+1})\leq f(\x^k),\\
    1, & \mbox{otherwise}.
  \end{array}\right.
\end{aligned}
\end{eqnarray}
The nonlinear conjugate gradient has the following steps
\begin{eqnarray}
\begin{aligned}\notag
\delta^k=-\nabla f(\x^k)+\max\left\{\frac{\<\nabla f(\x^k),\nabla f(\x^k)-\nabla f(\x^{k-1})\>}{\|\nabla f(\x^{k-1})\|^2},0\right\}\delta^{k-1},\quad \x^{k+1}=\x^k+\eta^k\delta^k,
\end{aligned}
\end{eqnarray}
where $\delta^{-1}=0$ and we follow \citep{carmon-2017-guilty} to set $\eta^k$ by the following backtracking line search: set $\eta^k=2\eta^{k-1}$ and check whether $f(\x^k+\eta^k\delta^k)\leq f(\x^k)+\frac{\eta^k\<\delta^k,\nabla f(\x^k)\>}{2}$ holds. If it does not hold, set $\eta^k=\eta^k/2$ and repeat.

We tune the best stepsize $\eta$ for each compared method (except CG) on each dataset. For CG, we set the same stepsize as GD since CG adaptively tune $\eta$ during the iterations. For Ada-RAGD-NC and Ada-RHB-NC, we set $\epsilon=10^{-4}$, $B=\sqrt{\frac{\epsilon}{\rho}}$, $\theta=0.005(\epsilon\rho\eta^2)^{1/4}$, $K=\lfloor1/\theta\rfloor$, $B_0=100$, $\gamma=10^{-5}$, $c_0=1+0.001t$ at the $t$th epoch, and $c_1=10$. We use (\ref{agdp-cont1}) to adaptively tune $\eta$ and $\rho$ since $\rho$ is unknown, where we set $c_2=2$ and initialize $\rho=1$. When preparing the experiments, we observe that proper $B_0$,  $\theta$, and $\gamma$ are crucial in the fast convergence of Ada-RAGD-NC and Ada-RHB-NC. We suggest to set $\theta$ in $(0,0.01]$ and $\gamma$ to be small such that line 11 in Algorithms \ref{AGD1p} and \ref{HB1p} is less frequently triggered. $B_0$ can be set larger when the methods restart frequently. For CG, we stop the line search when it repeats more than 10 times. For Jin's AGD, we tune $\theta=0.04(\epsilon\rho\eta^2)^{1/4}$ and follow \citep{jinchi-18-apg} to set $\gamma=\frac{\theta^2}{\eta}$ and $s=\frac{\gamma}{4\rho}$ in their method. Since the Hessian Lipschitz constant $\rho$ is unknown, we set it as 1 for Jin's AGD for simplicity. For the ``convex until proven guilty" method, we follow the theory in \citep{carmon-2017-guilty} to set the parameters except that we terminate the inner loop after 100 iterations to improve its practical performance. GD and heuristic RAGD have no hyperparameter to tune except the stepsize. Since the optimal function value $f(\x^*)$ is unknown, we run each method for 2000 iterations and use the minimum objective value to approximate the optimal one. We only plot the figures using the first 1000 iterations.

\begin{figure}[p]
\begin{tabular}{@{\extracolsep{0.001em}}c@{\extracolsep{0.001em}}c@{\extracolsep{0.001em}}c}
\includegraphics[width=0.33\textwidth,keepaspectratio]{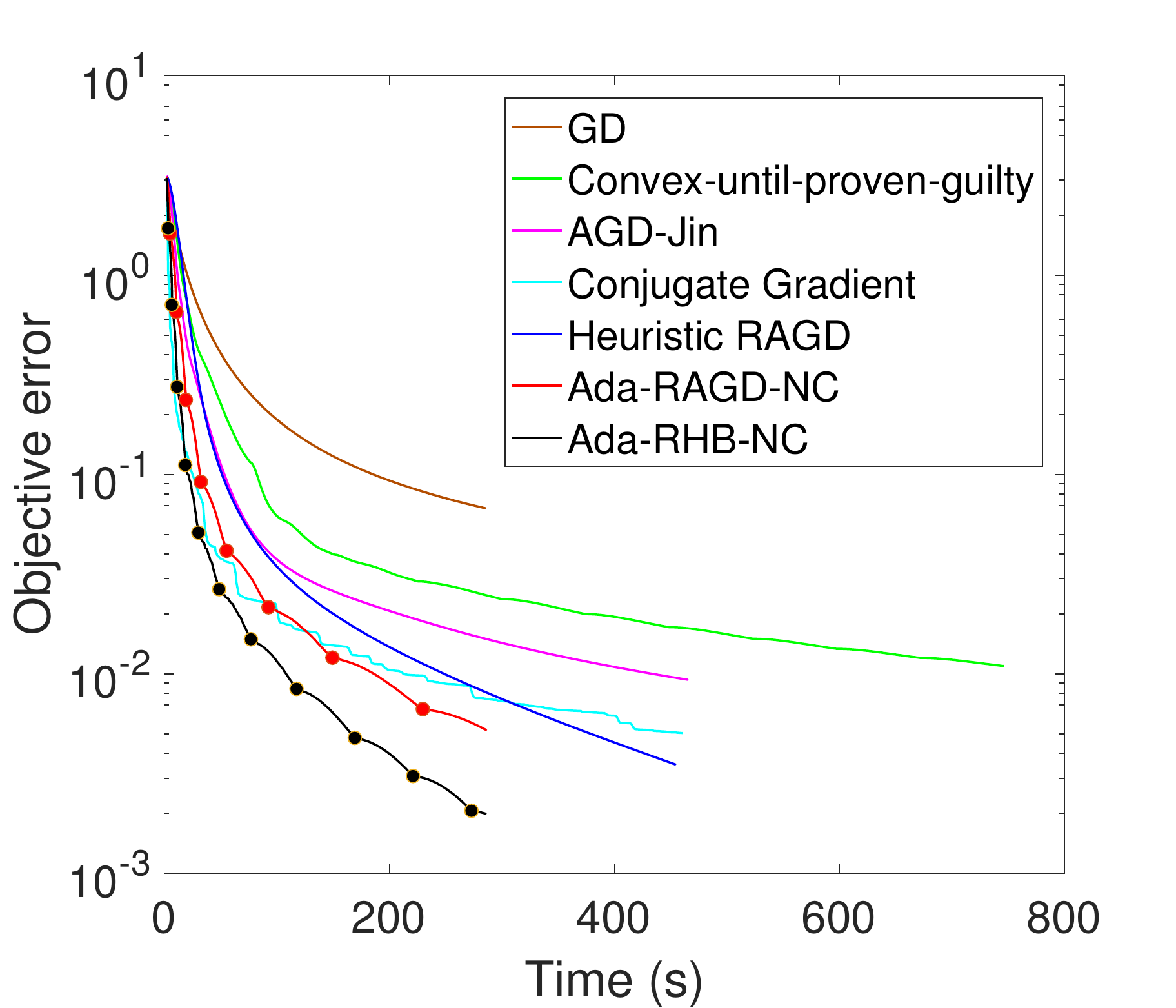}
&\includegraphics[width=0.33\textwidth,keepaspectratio]{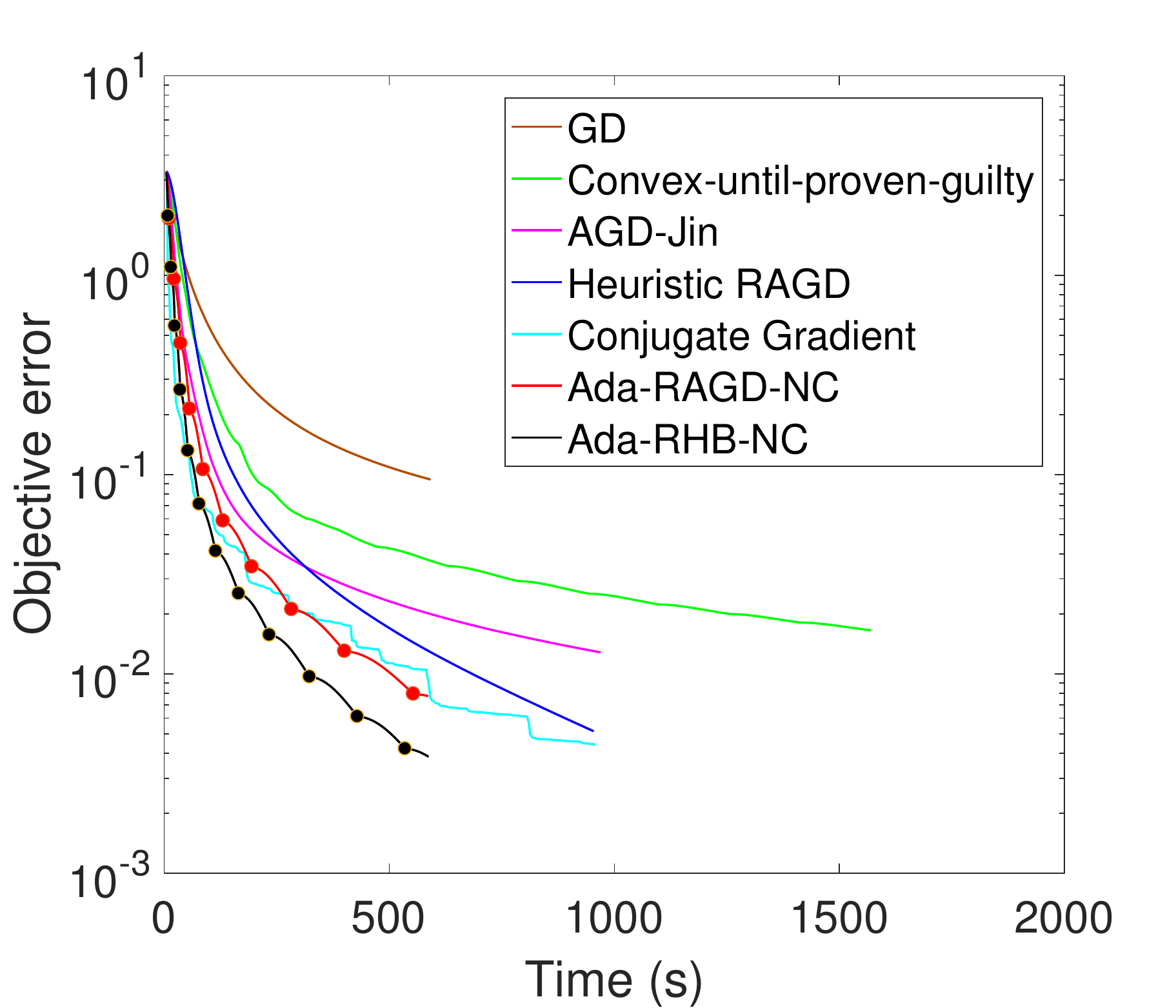}
&\includegraphics[width=0.33\textwidth,keepaspectratio]{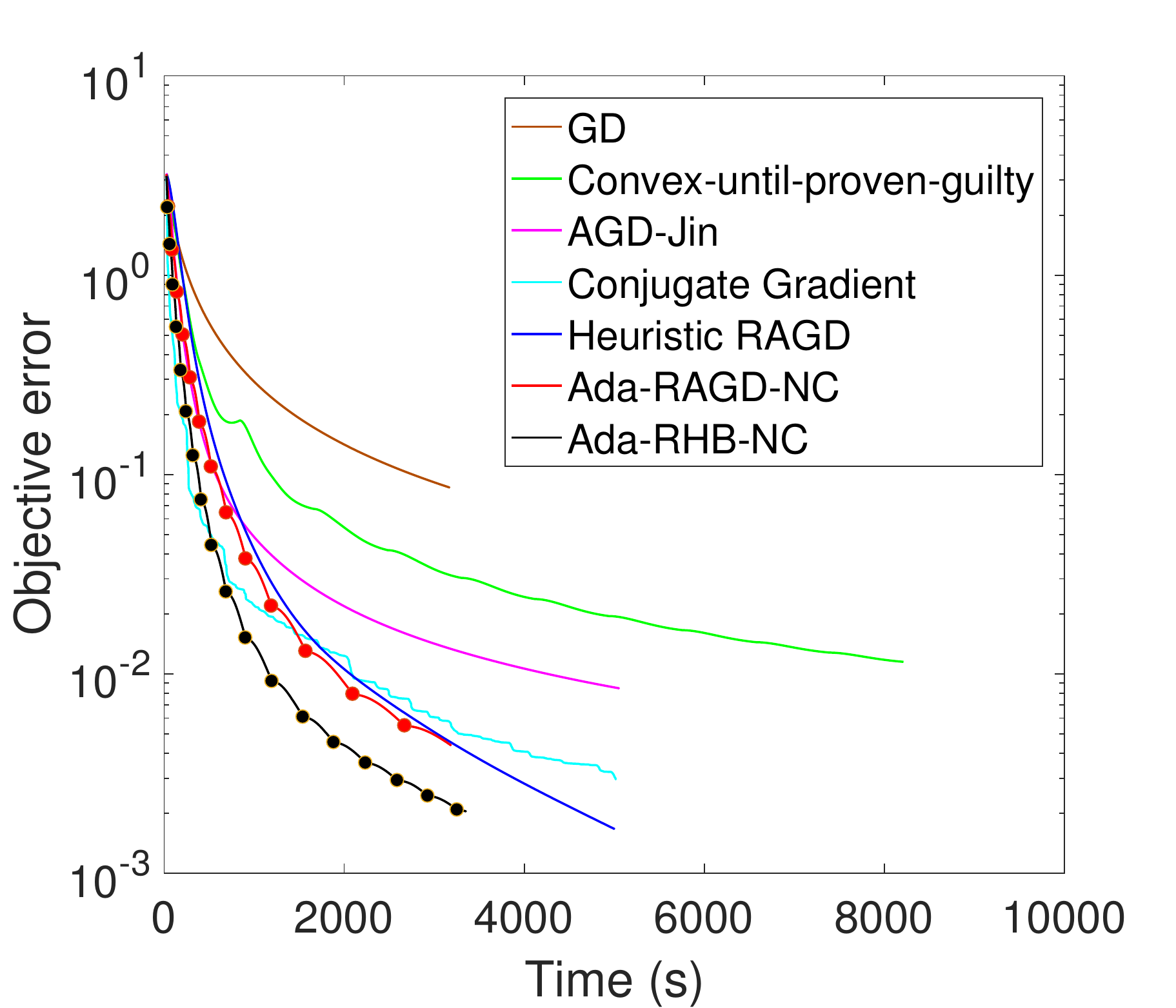}\\
\includegraphics[width=0.33\textwidth,keepaspectratio]{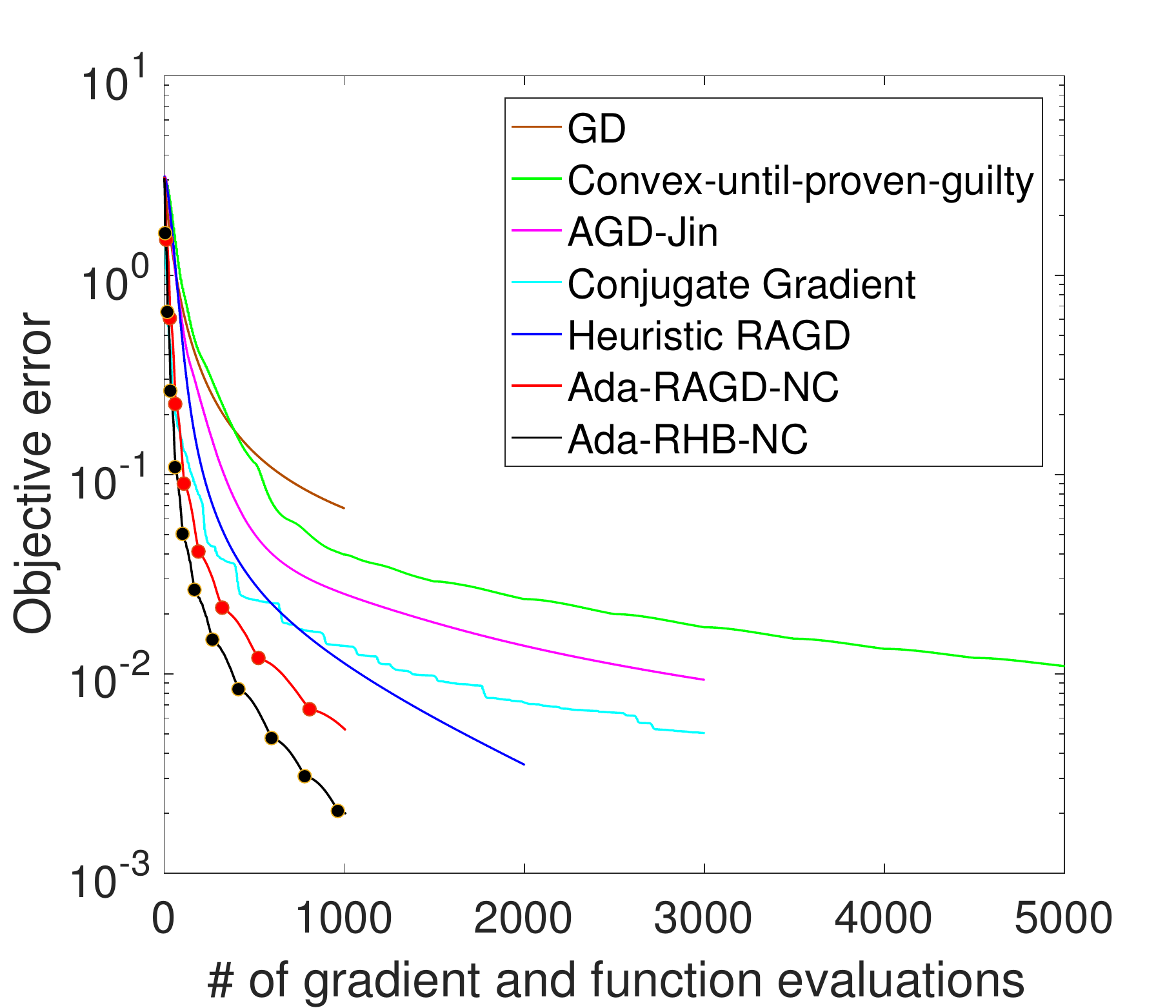}
&\includegraphics[width=0.33\textwidth,keepaspectratio]{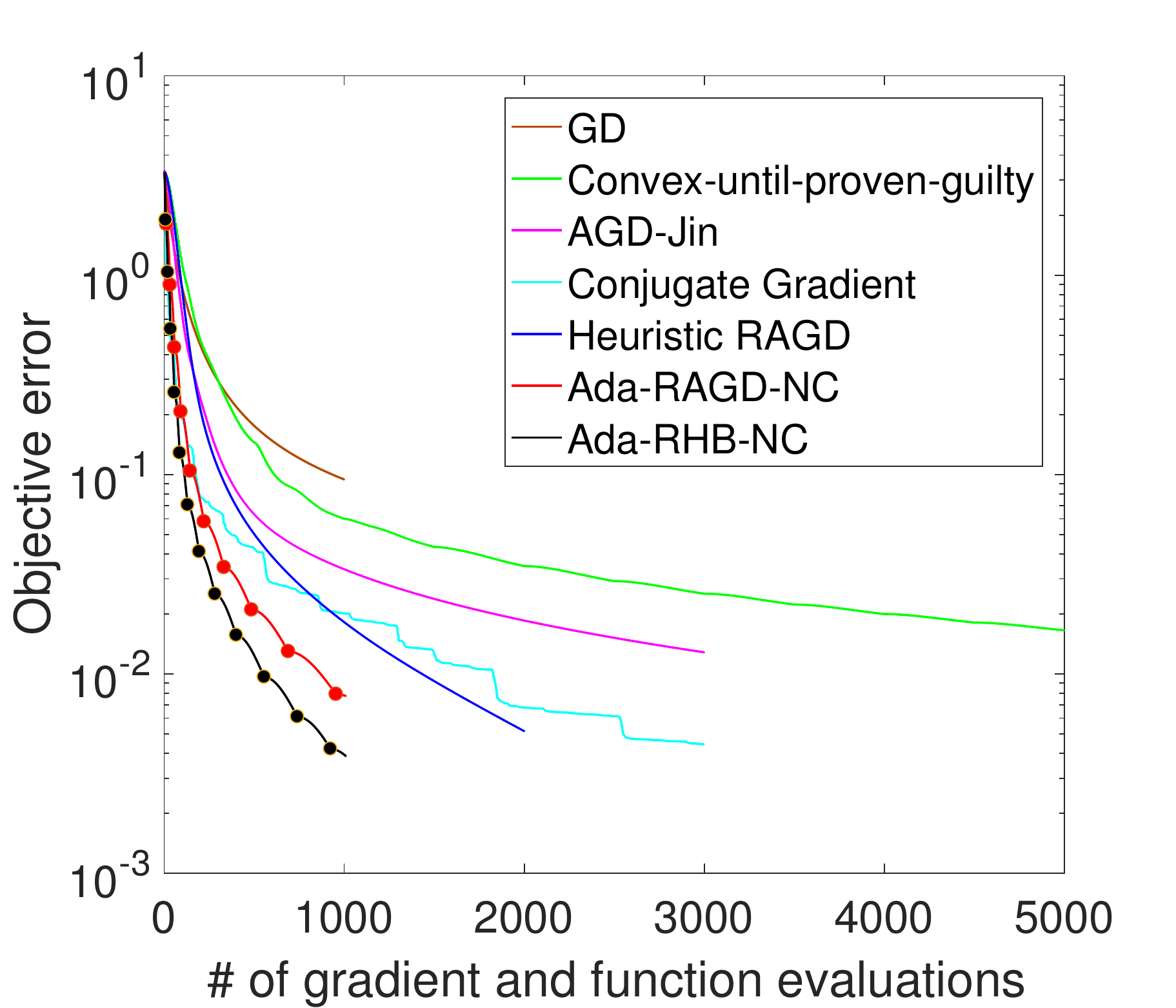}
&\includegraphics[width=0.33\textwidth,keepaspectratio]{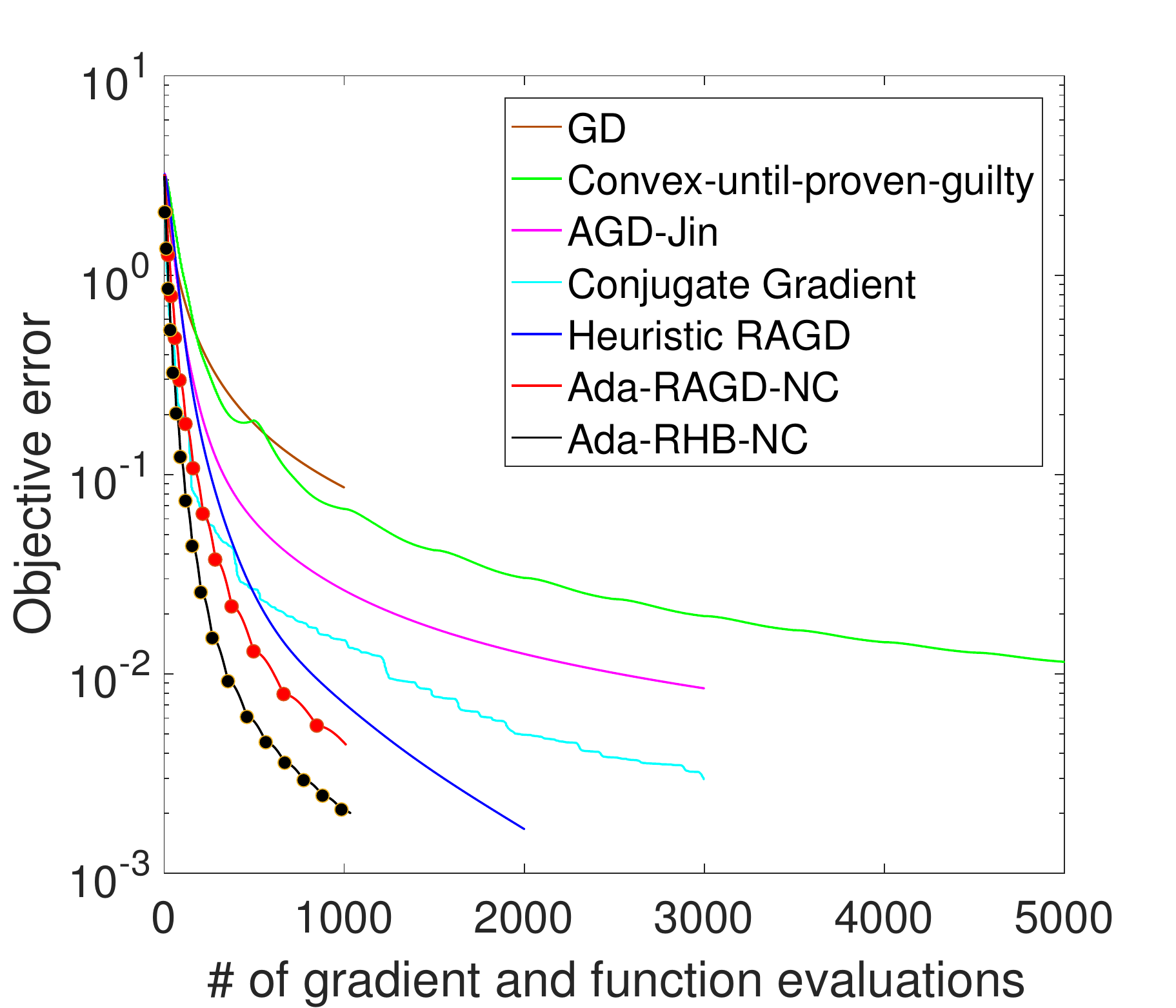}\\
\includegraphics[width=0.33\textwidth,keepaspectratio]{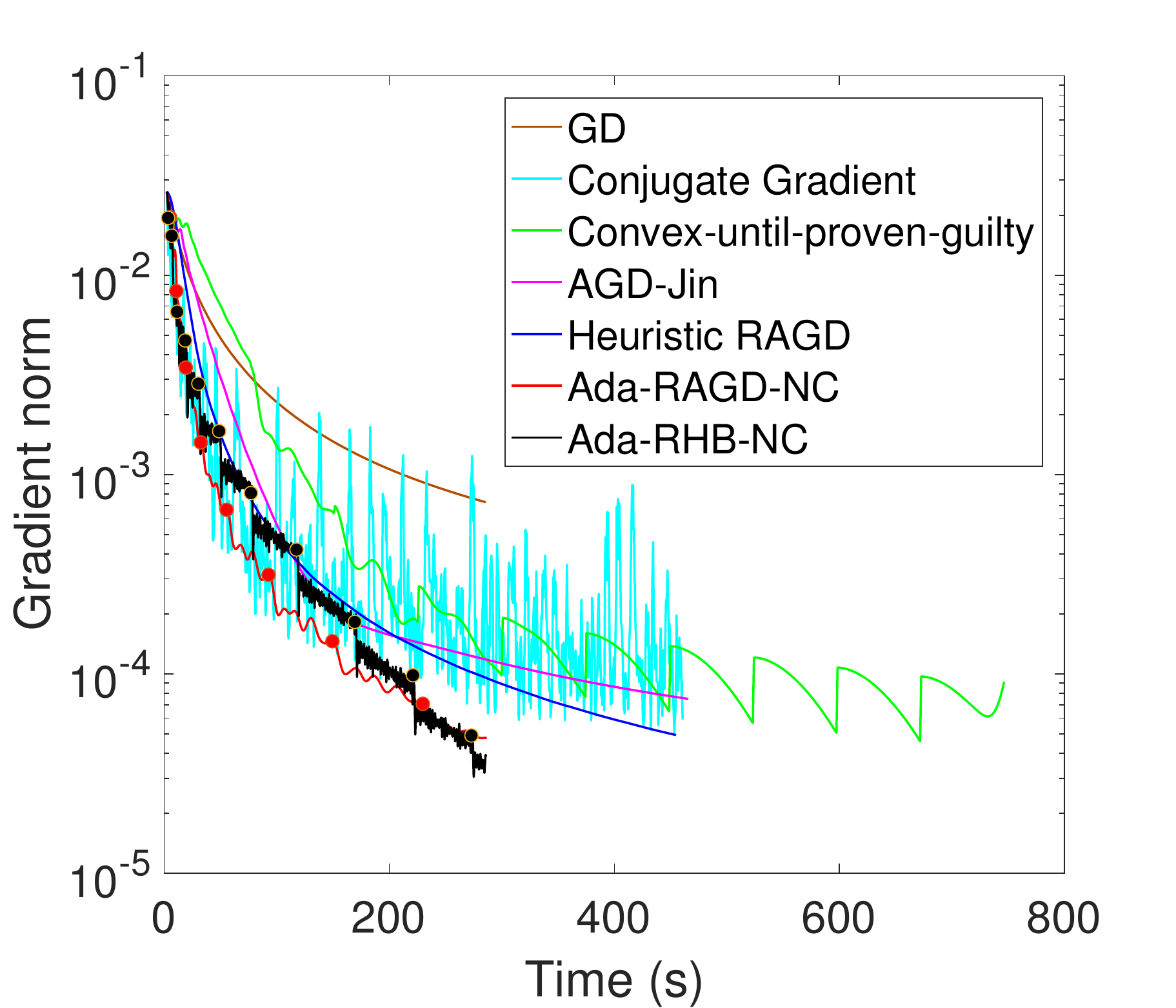}
&\includegraphics[width=0.33\textwidth,keepaspectratio]{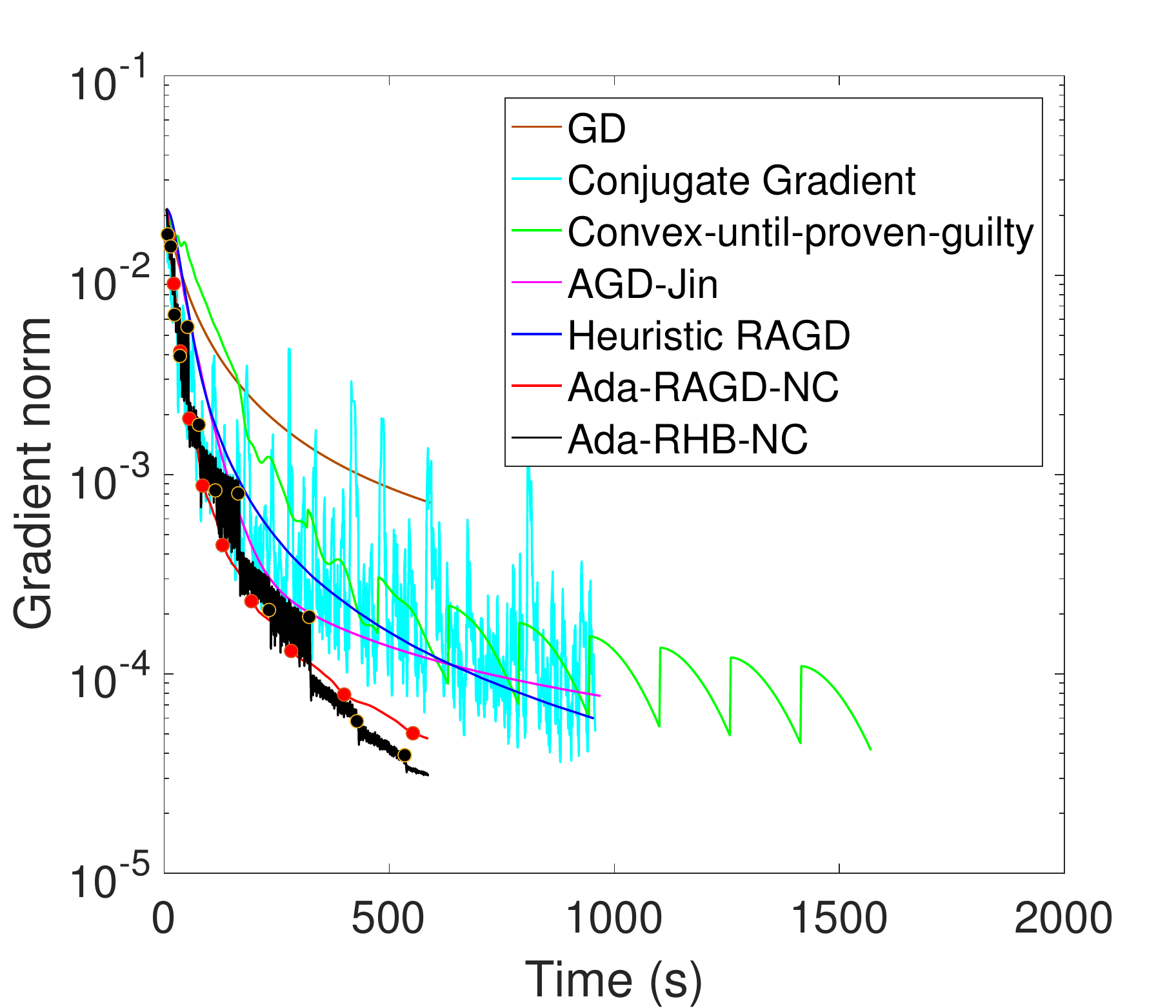}
&\includegraphics[width=0.33\textwidth,keepaspectratio]{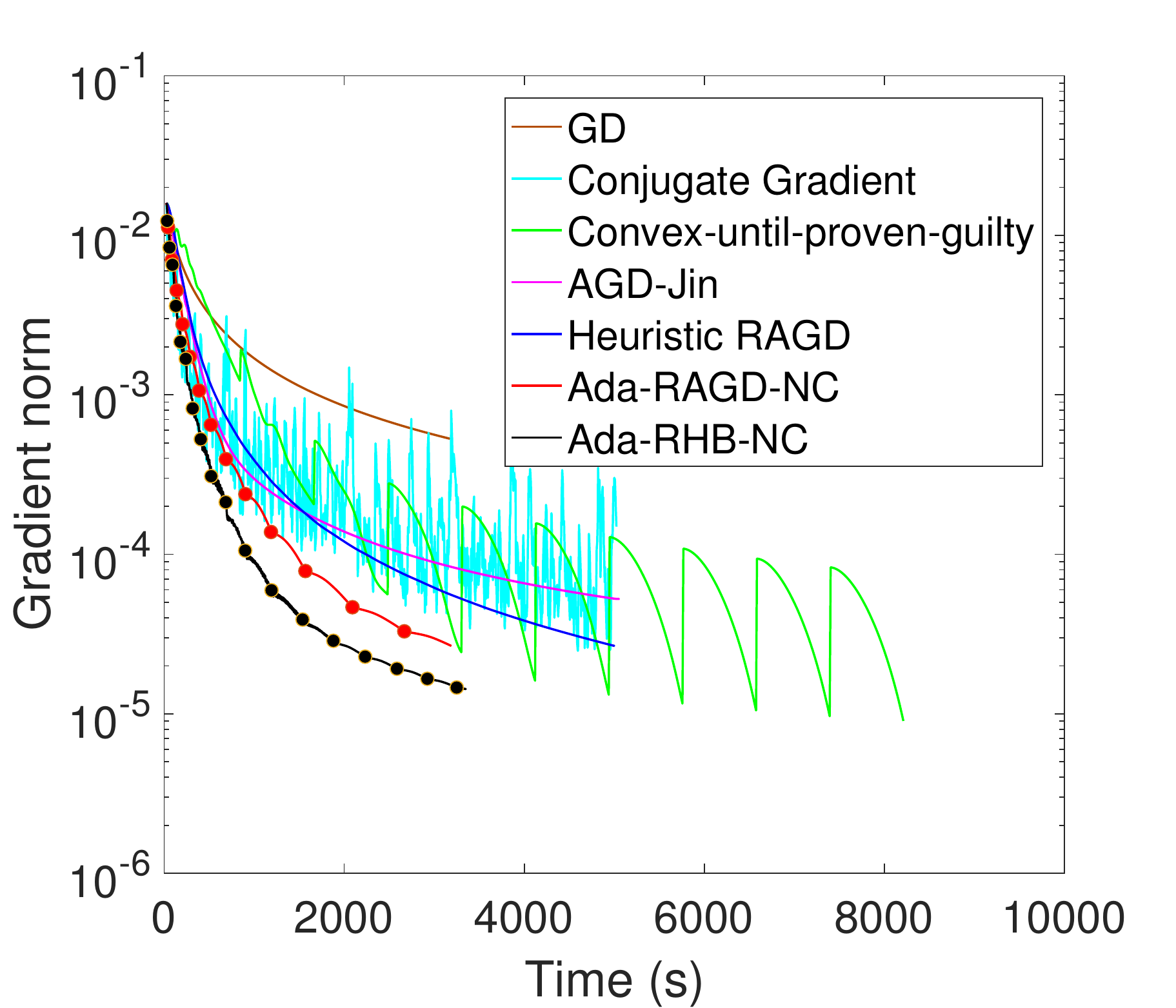}\\
\includegraphics[width=0.33\textwidth,keepaspectratio]{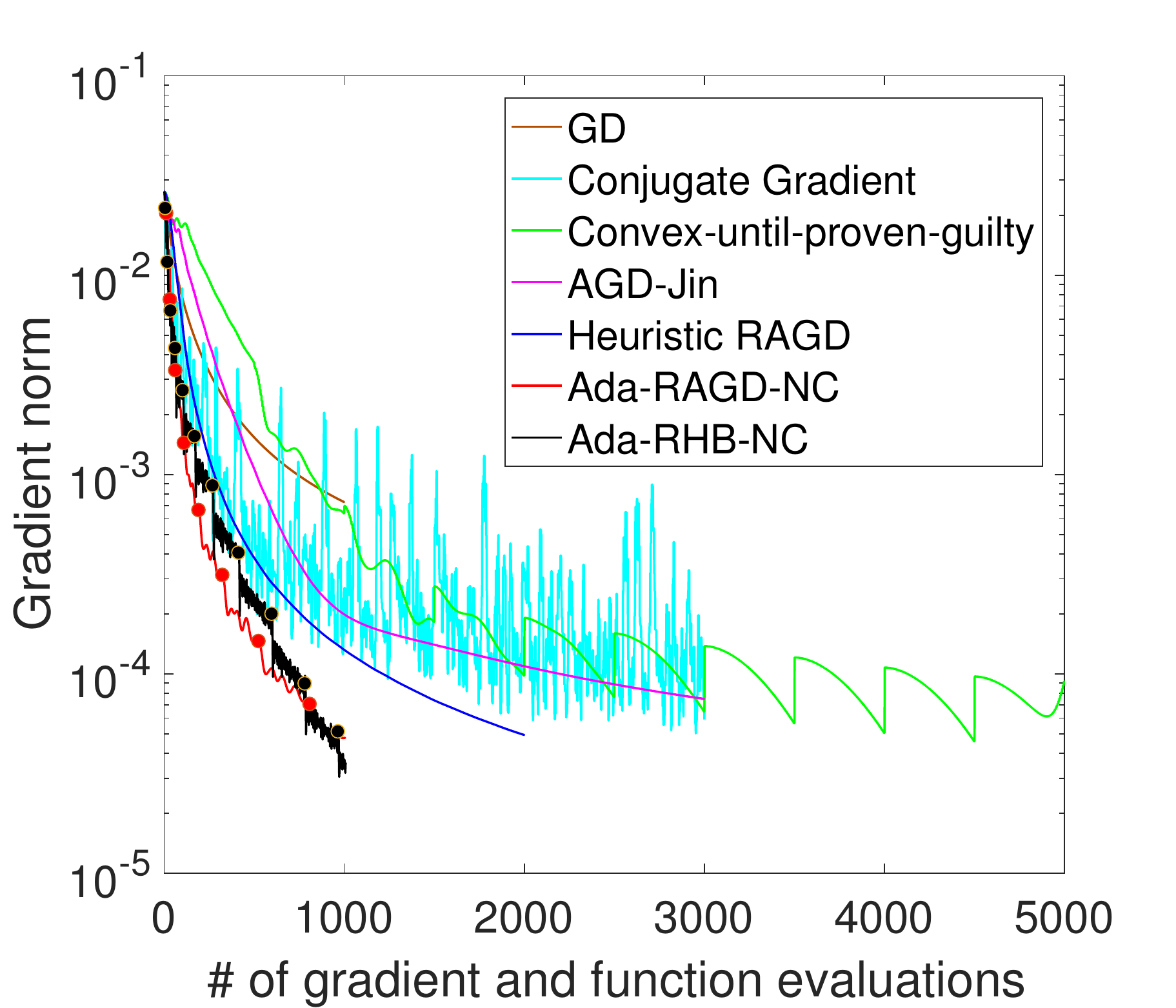}
&\includegraphics[width=0.33\textwidth,keepaspectratio]{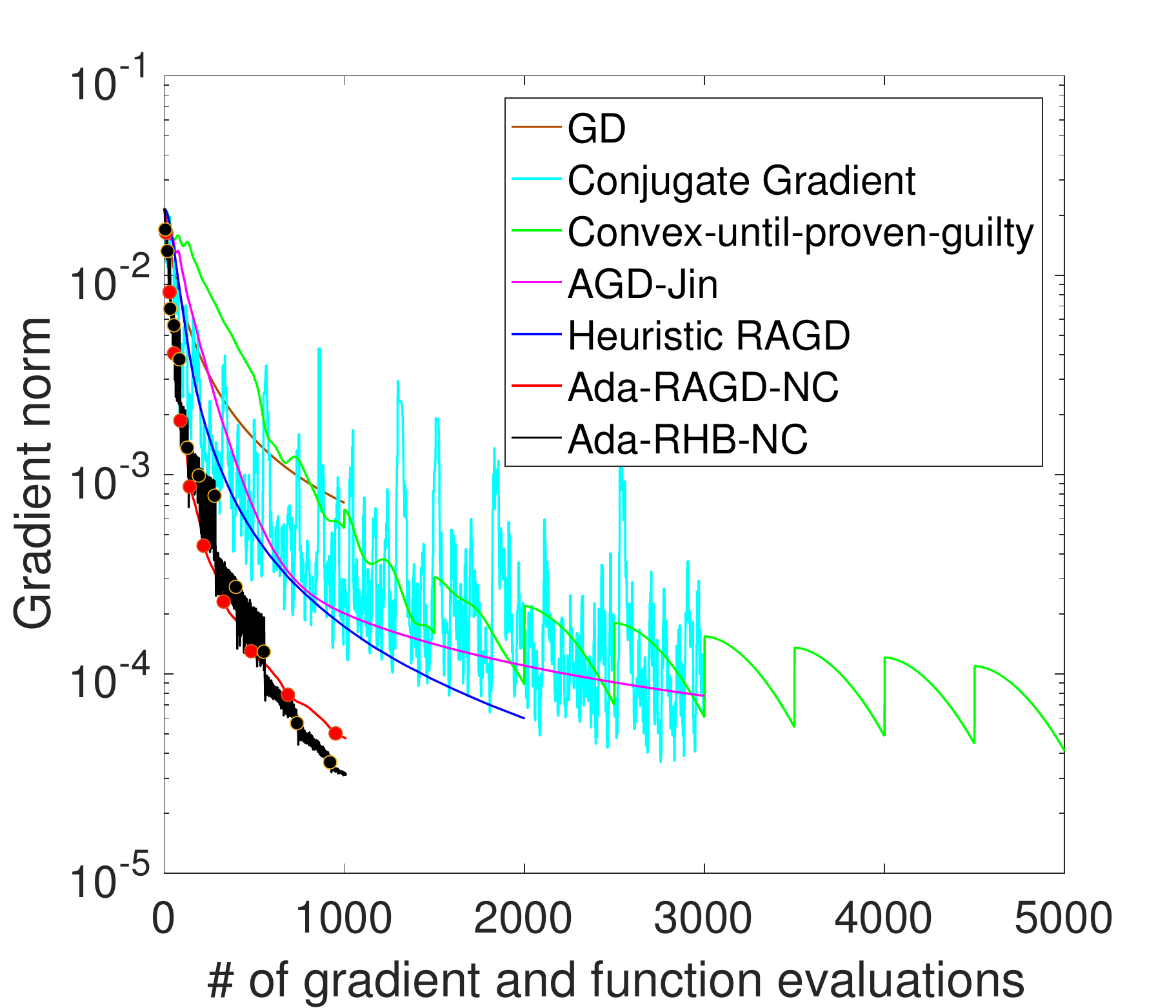}
&\includegraphics[width=0.33\textwidth,keepaspectratio]{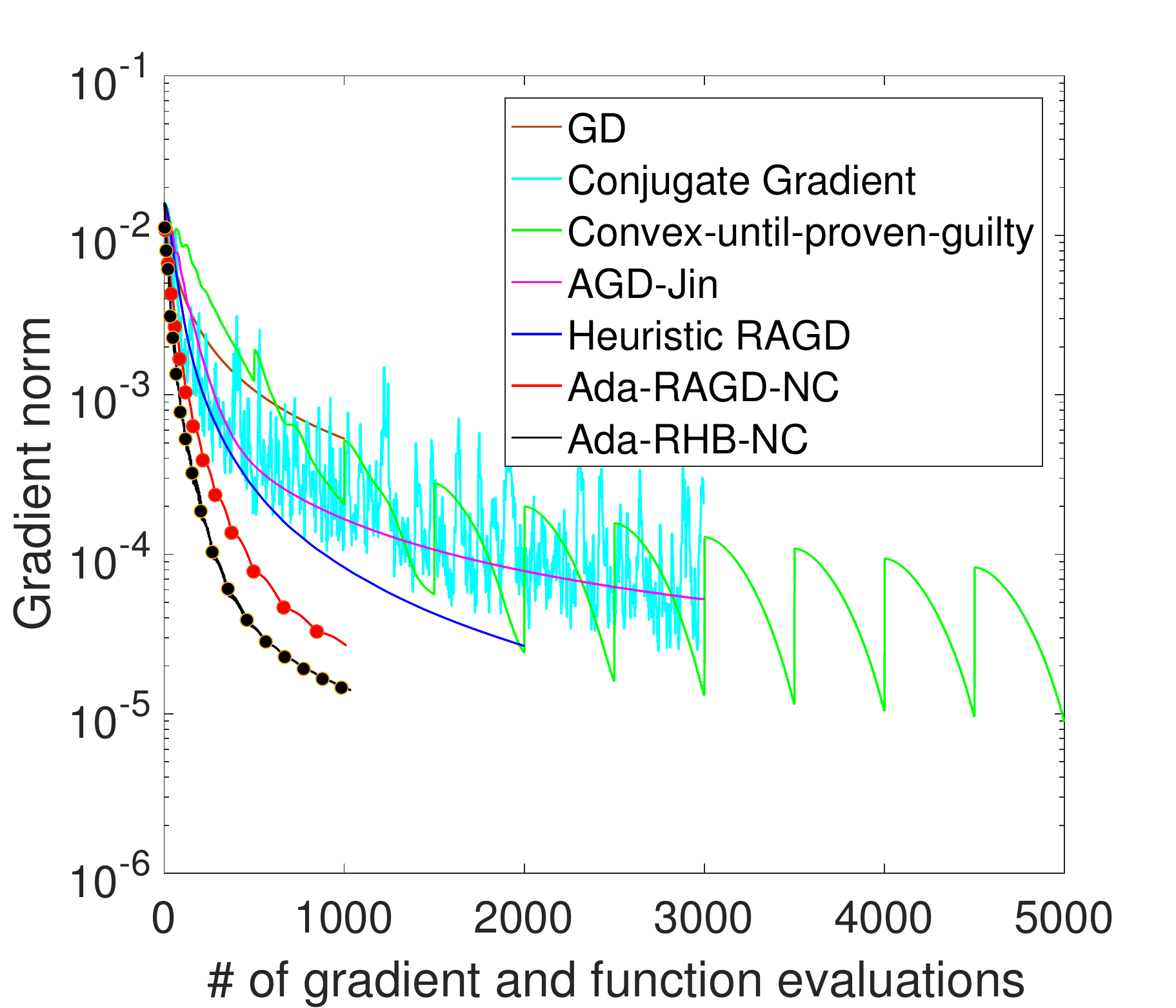}\\
(a) MovieLens-10M & (b) MovieLens-20M & (c) Netflix
\end{tabular}
\caption{Comparisons on the matrix completion problem. The first and second row: objective error. The third and forth row: gradient norm. The first and third row: use time as the horizontal axis. The second and forth row: use the number of function and gradient evaluations as the horizontal axis. Circles in Ada-RAGD-NC and Ada-RHB-NC indicate where restart occurs.}\label{fig1}
\end{figure}

Figure \ref{fig1} plots the objective error $f(\x^k)-f(\x^*)$ and gradient norm $\|\nabla f(\x^k)\|$. We use running time as the horizontal axis in the first and third row, and the number of function and gradient evaluations as the horizontal axis in the second and forth row. Note that the ``convex until proven guilty" method needs at least two gradient evaluations at each iteration while the other methods only need one. For the function evaluations, GD needs none, Ada-RAGD-NC and Ada-RHB-NC need one only when restart occurs, heuristic RAGD needs one at each iteration, Jin's AGD needs at least two, CG needs at least one, and the ``convex until proven guilty" method needs at least three at each iteration. Thus, GD and our Ada-RAGD-NC and Ada-RHB-NC need less total running time when we run all the methods for 1000 iterations. We see that all the accelerated methods perform better than GD, which verifies the efficiency of acceleration in nonconvex optimization. We also observe that our Ada-RAGD-NC and Ada-RHB-NC decrease the objective error and gradient norm to low level quickly. We observe that the gradient norms of CG oscillate during iterations. It may be because CG uses line search to tune the stepsize dynamically, which may be too large and aggressive. On the other hand, due to the specification of the matrix completion problem, we observe that Jin's AGD and the ``convex until proven guilty" method seldom run negative curvature exploitation.

\subsection{One bit matrix completion}

In one bit matrix completion \citep{Davenport-2014}, the signs of a random subset of entries are observed, rather than observing the actual entries. Given a probability density function, for example, the logistic function $f(x)=\frac{e^x}{1+e^x}$, we observe the sign of entry $\X_{i,j}$ as $\Y_{i,j}=1$ with probability $f(\X_{i,j})$, and observe the sign as $-1$ with probability $1-f(\X_{i,j})$. The training model is to minimize the following negative log-likelihood:
\begin{eqnarray}
\begin{aligned}\notag
\min_{\X\in\R^{m\times n}} &-\frac{1}{N}\sum_{(i,j)\in\mathcal O}\left\{\mathbf 1_{\Y_{i,j}=1}\mbox{log}(f(\X_{i,j}))+\mathbf 1_{\Y_{i,j}=-1}\mbox{log}(1-f(\X_{i,j}))\right\},\\
s.t.&\quad \mbox{rank}(\X)\leq r,\notag
\end{aligned}
\end{eqnarray}
where $\mathbf 1_{\Y_{i,j}=1}=\left\{\begin{array}{cl}
    1, & \mbox{if } \Y_{i,j}=1,\\
    0, & \mbox{otherwise}.
  \end{array}\right.$ We solve the following reformulated matrix factorization model:

\begin{eqnarray}
\begin{aligned}\notag
\min_{\U,\V}&-\frac{1}{N}\sum_{(i,j)\in\mathcal O}\left\{\mathbf 1_{\Y_{i,j}=1}\mbox{log}(f((\U\V^T)_{i,j}))+\mathbf 1_{\Y_{i,j}=-1}\mbox{log}(1-f((\U\V^T)_{i,j}))\right\}+\frac{1}{2N}\|\U^T\U-\V^T\V\|_F^2,
\end{aligned}
\end{eqnarray}
where $\U\in\R^{m\times r}$ and $\V\in\R^{n\times r}$. We compare Ada-RAGD-NC (Algorithm \ref{AGD1p}) and Ada-RHB-NC (Algorithm \ref{HB1p}) with the methods compared in Section \ref{exp_sec1}. The best stepsize is tuned for each method on each data set. We use the same initialization and set the same parameters as those in Section \ref{exp_sec1}, and also run each method for 1000 iterations. Figure \ref{fig2} plots the results. We see that acceleration also takes effect in nonconvex optimization and our Ada-RAGD-NC and Ada-RHB-NC also decrease the objective value and gradient norm to low level quickly.

\begin{figure}[p]
\begin{tabular}{@{\extracolsep{0.001em}}c@{\extracolsep{0.001em}}c@{\extracolsep{0.001em}}c}
\includegraphics[width=0.33\textwidth,keepaspectratio]{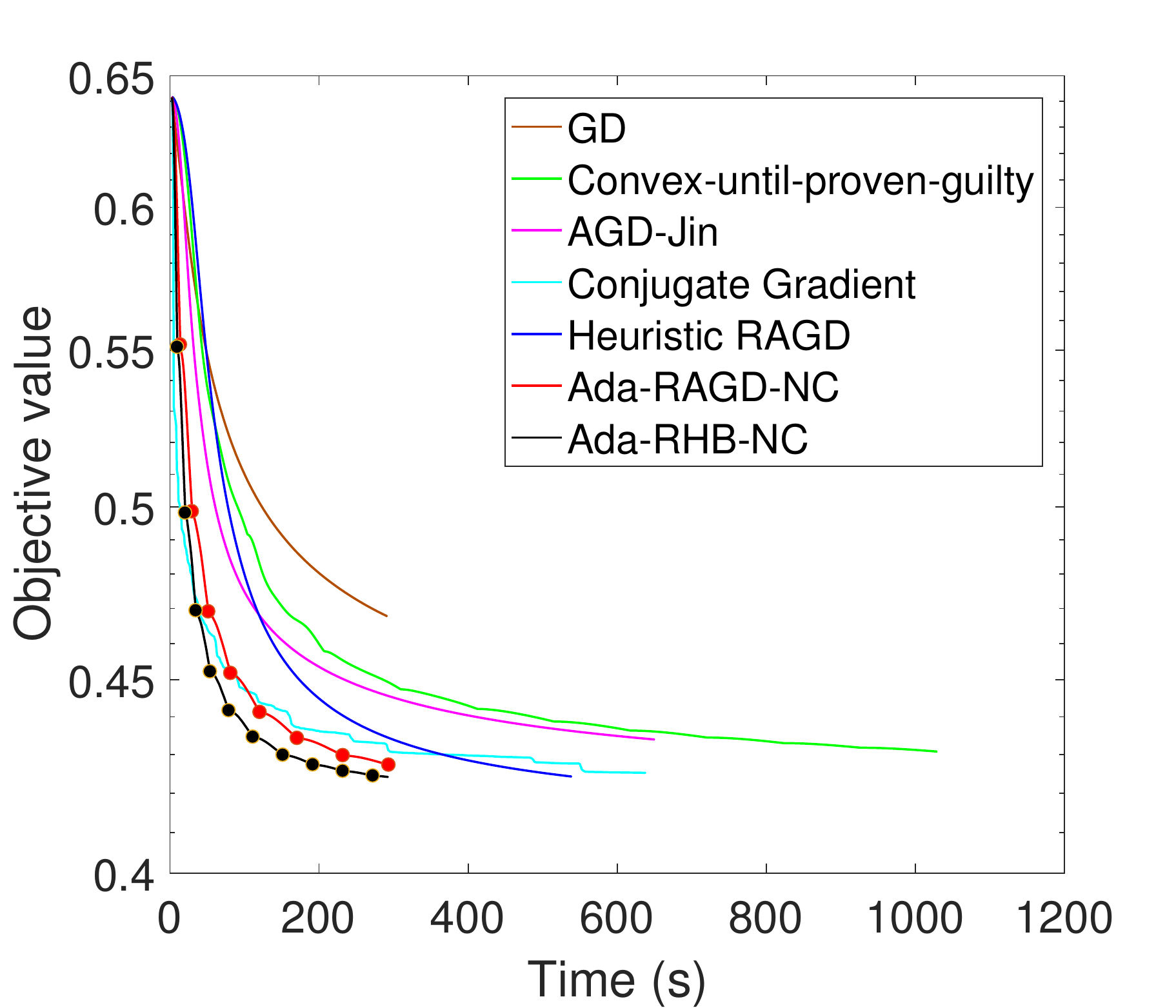}
&\includegraphics[width=0.33\textwidth,keepaspectratio]{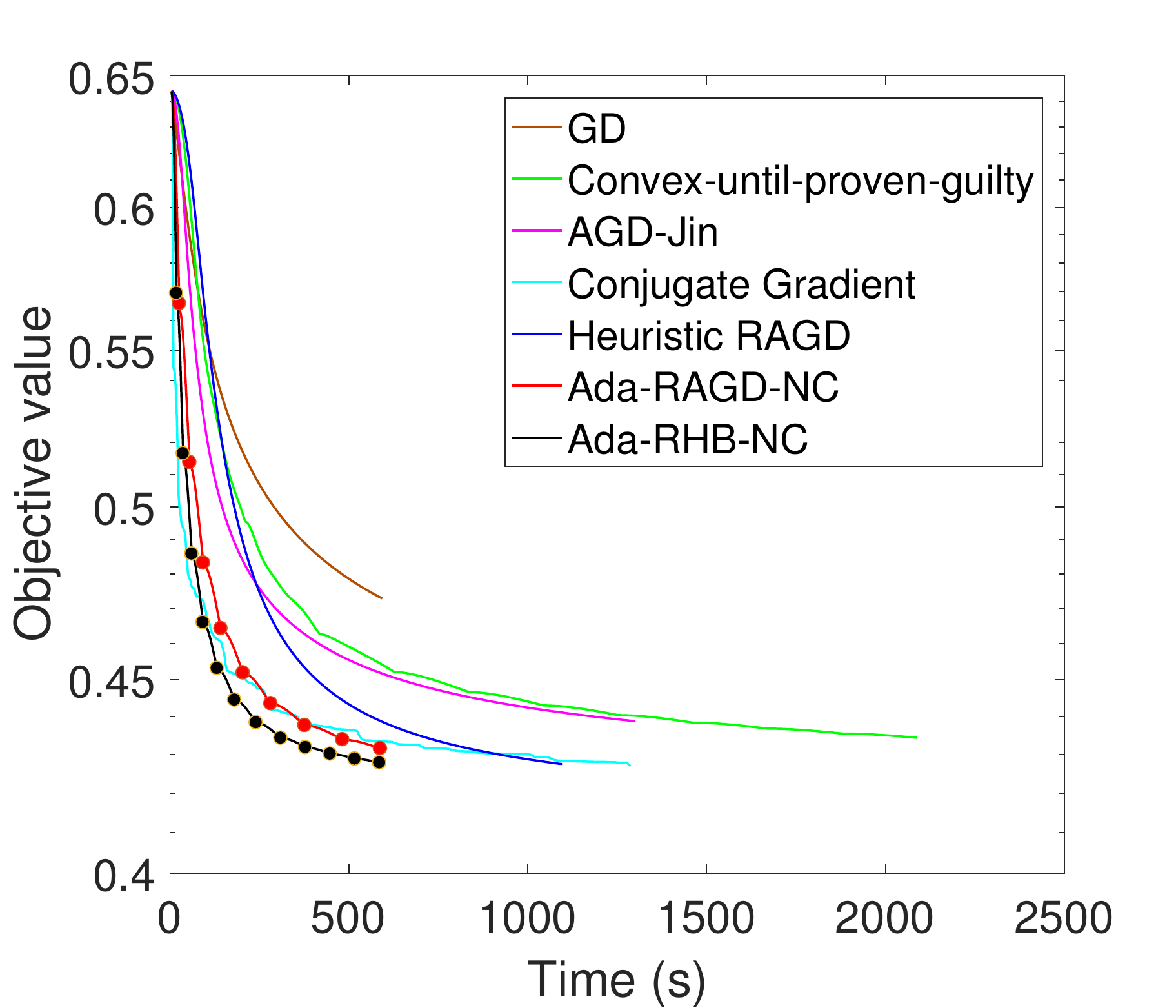}
&\includegraphics[width=0.33\textwidth,keepaspectratio]{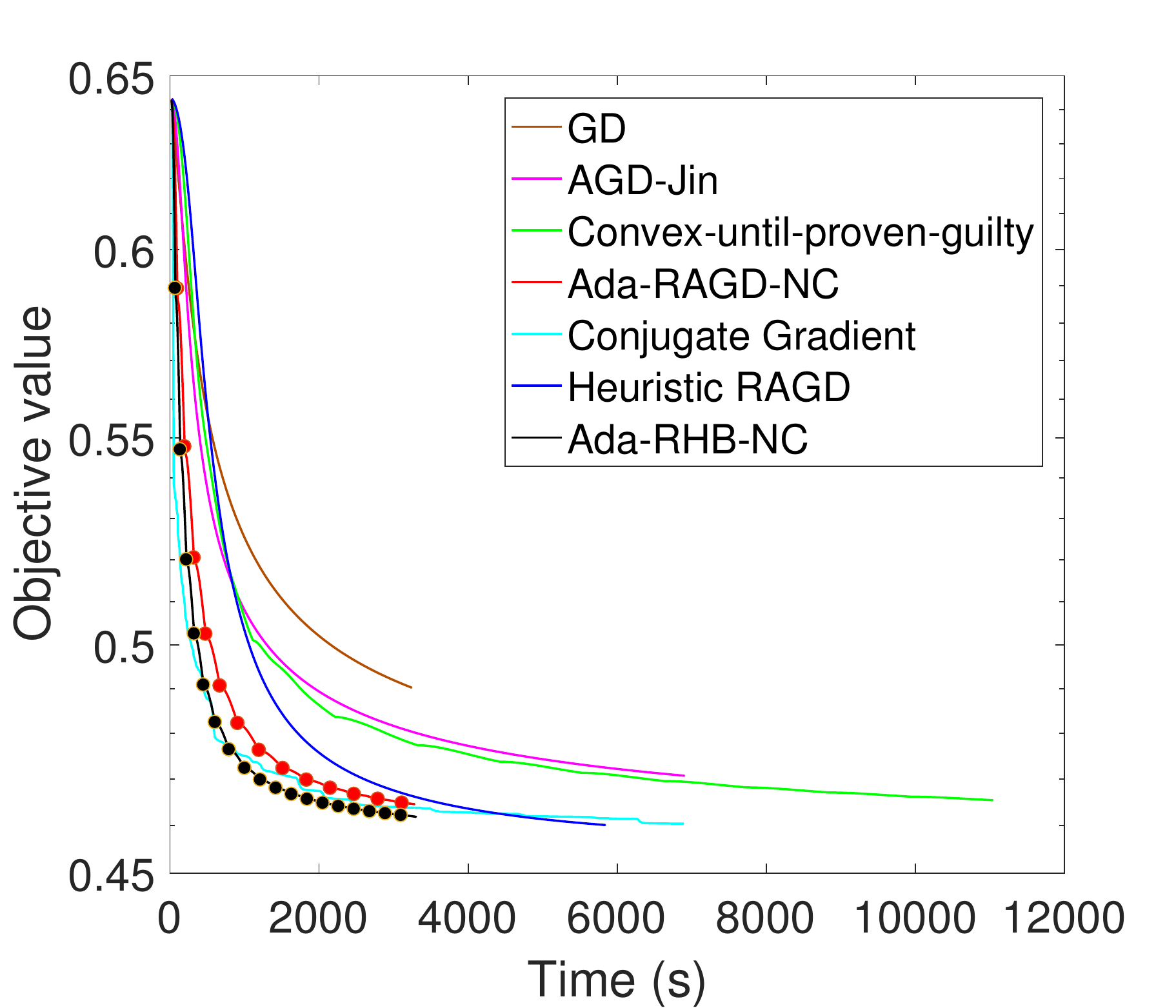}\\
\includegraphics[width=0.33\textwidth,keepaspectratio]{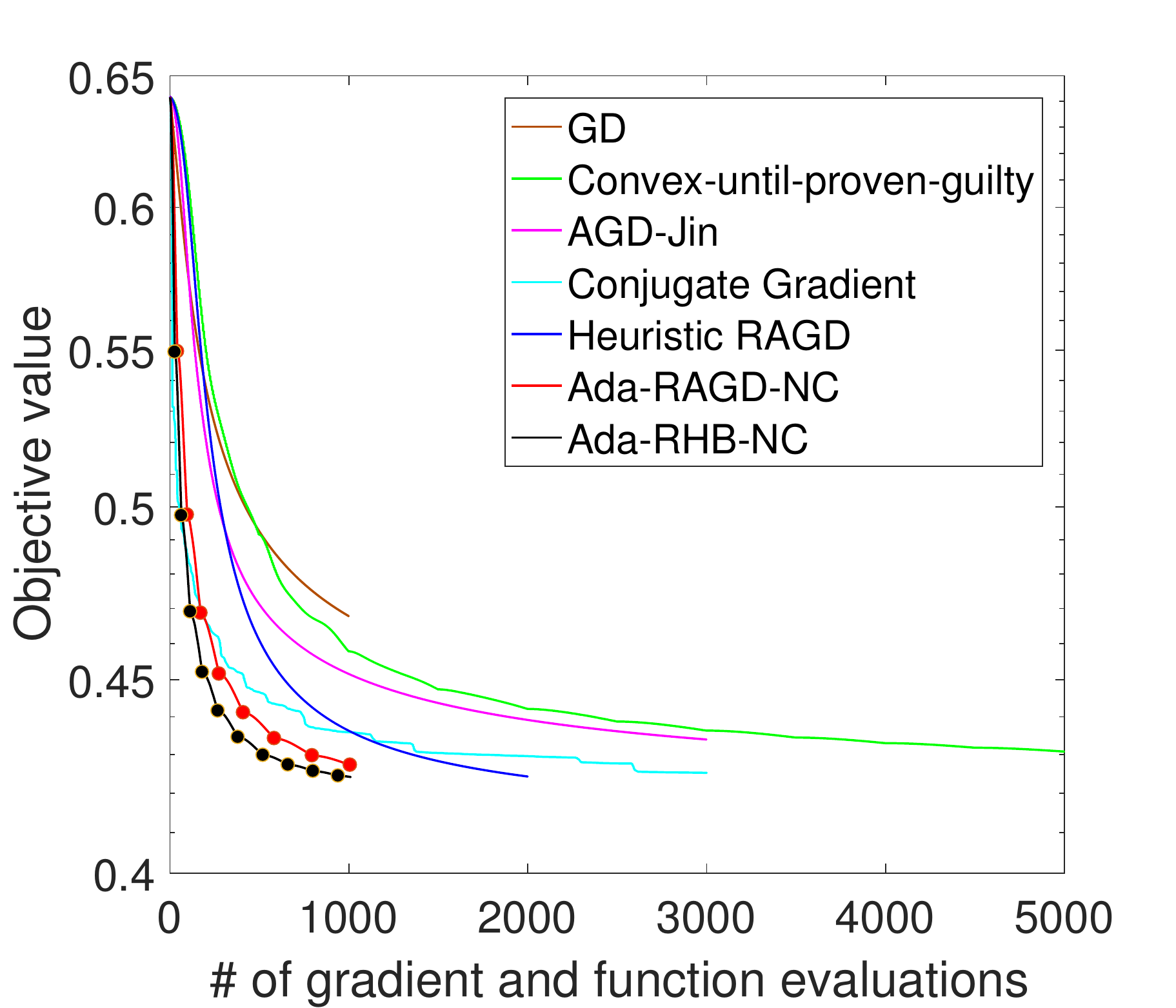}
&\includegraphics[width=0.33\textwidth,keepaspectratio]{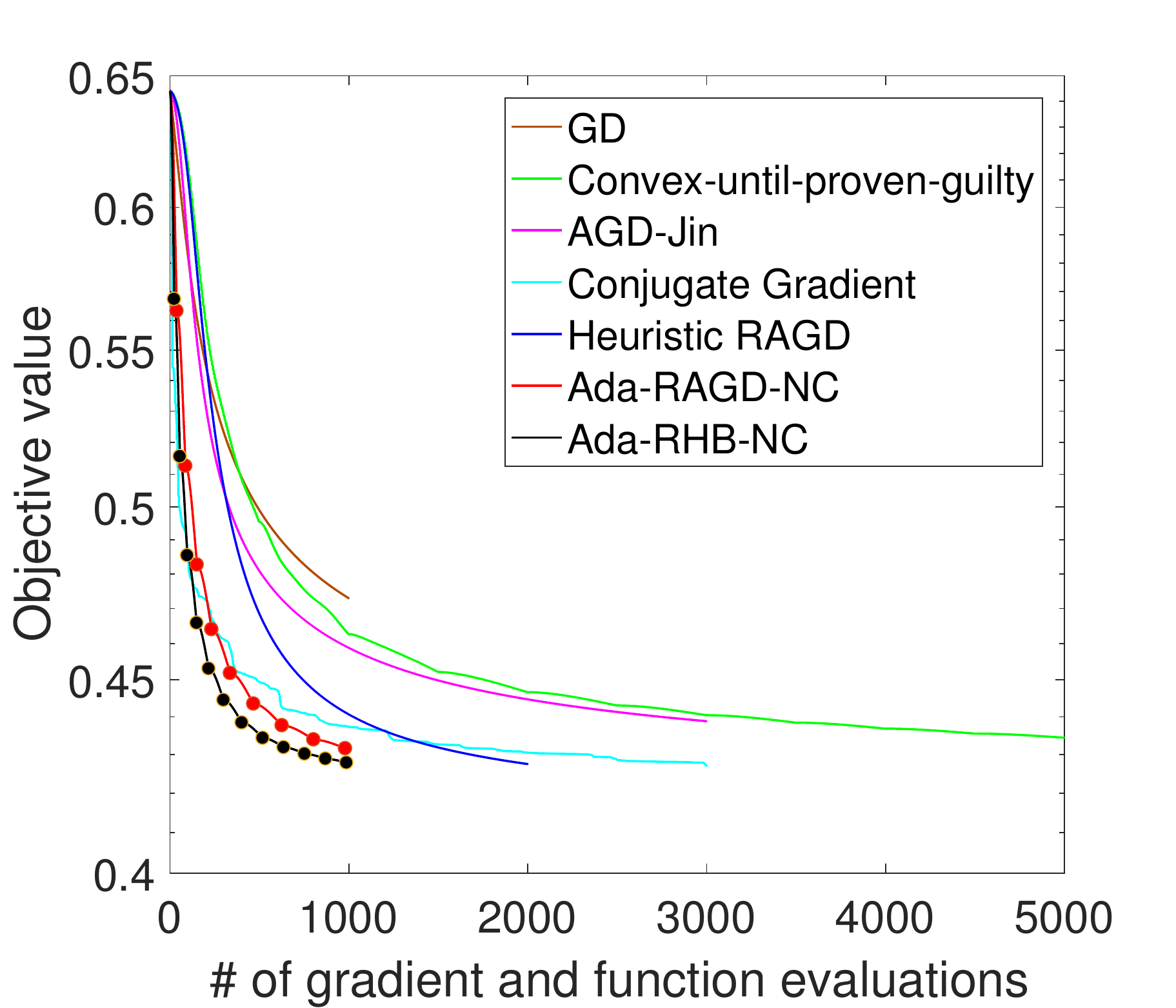}
&\includegraphics[width=0.33\textwidth,keepaspectratio]{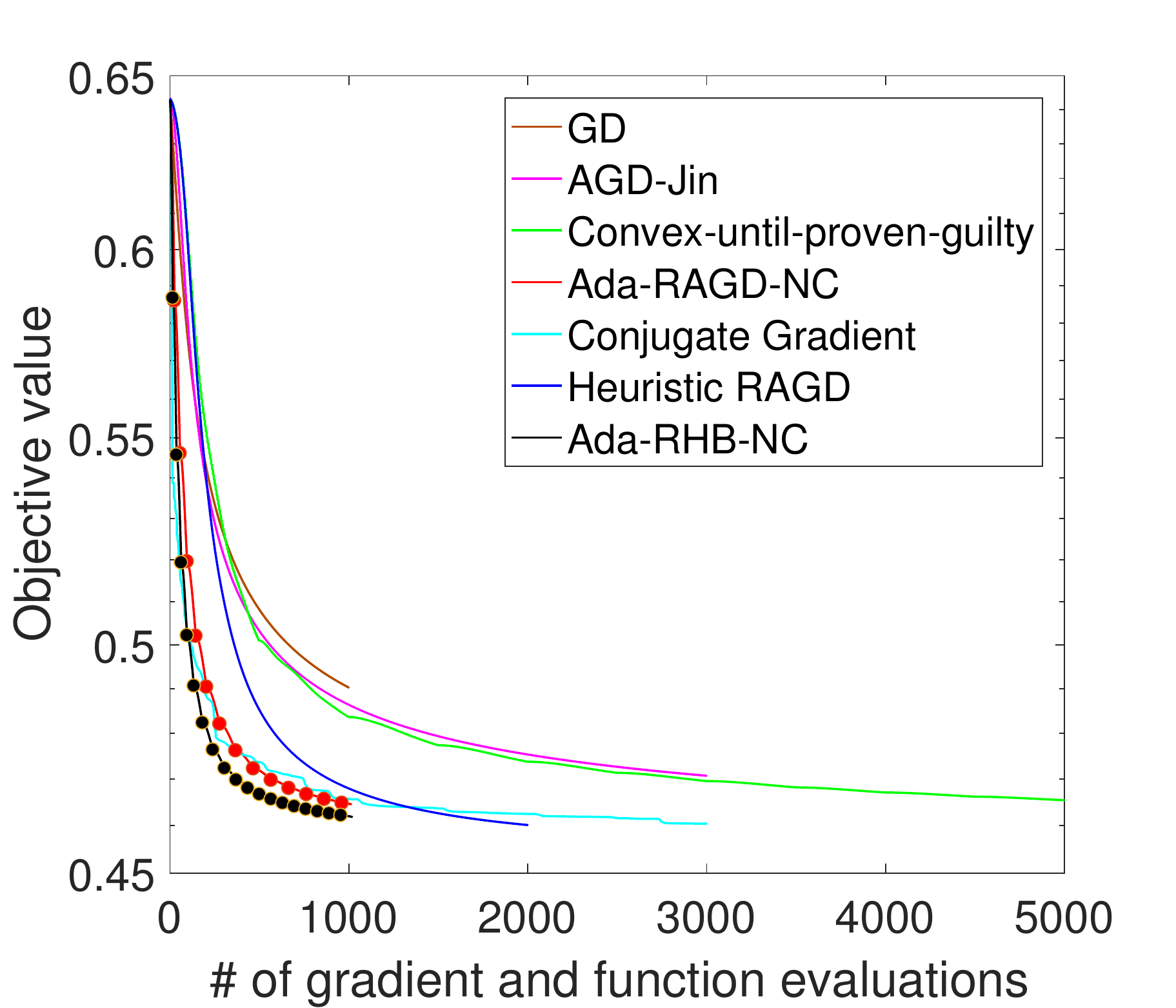}\\
\includegraphics[width=0.33\textwidth,keepaspectratio]{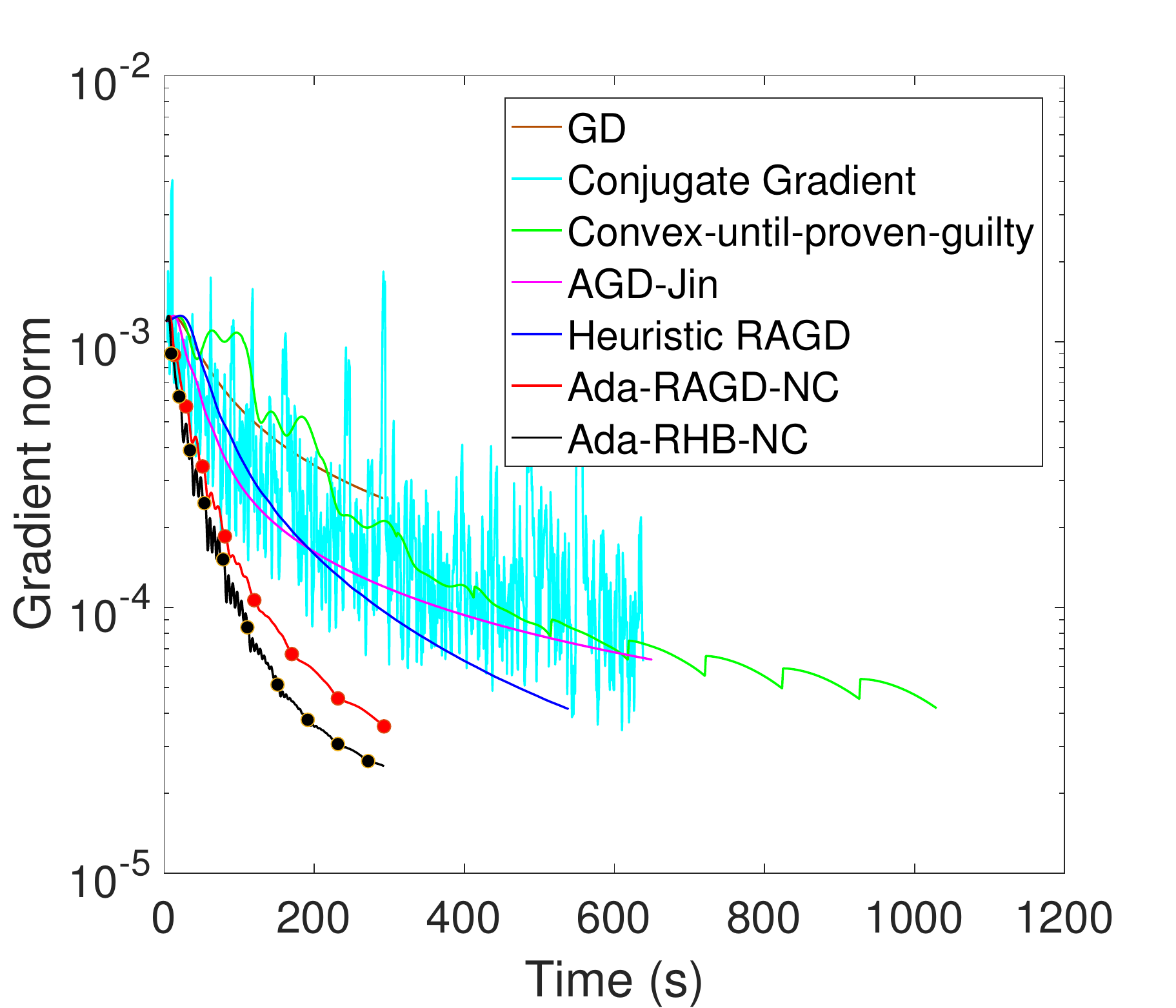}
&\includegraphics[width=0.33\textwidth,keepaspectratio]{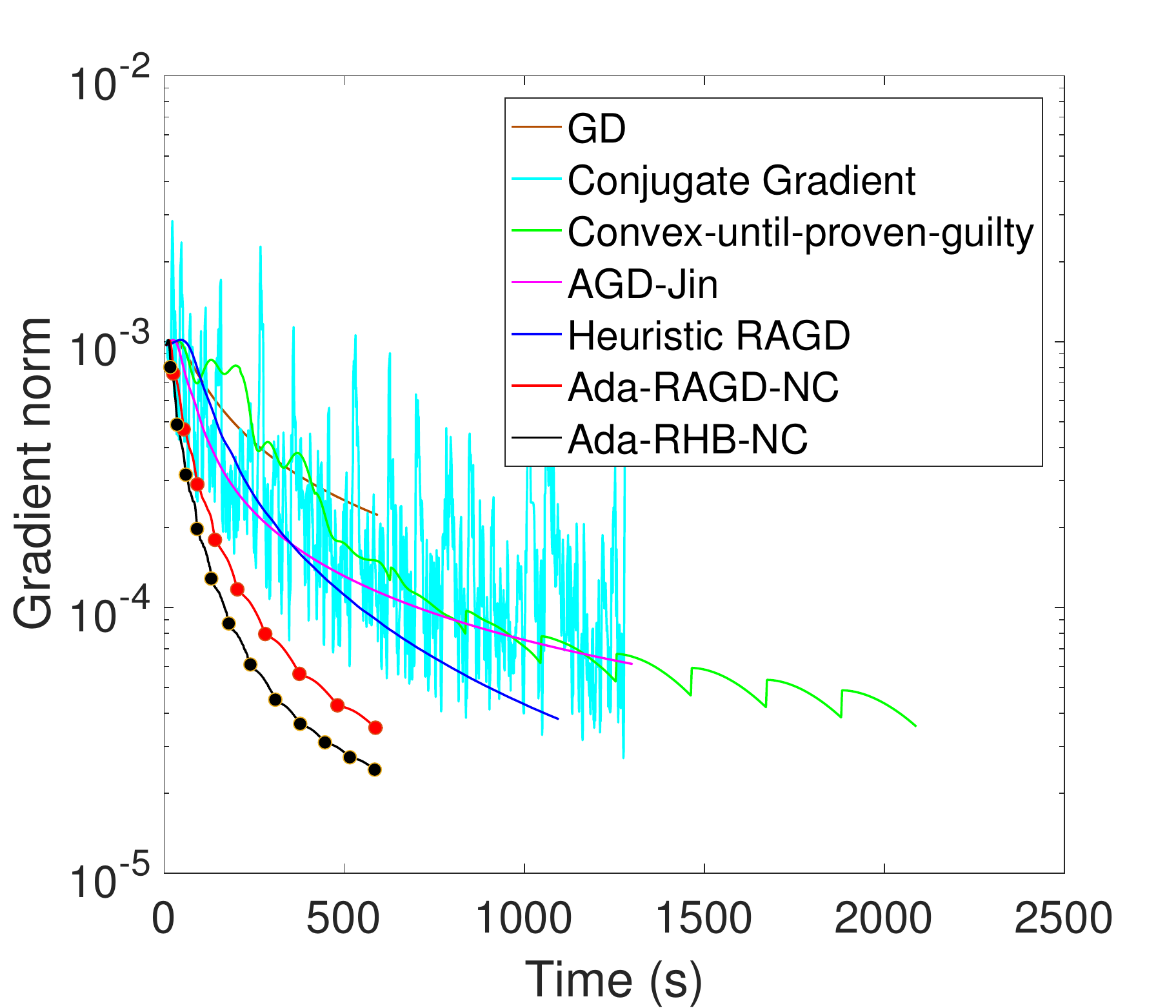}
&\includegraphics[width=0.33\textwidth,keepaspectratio]{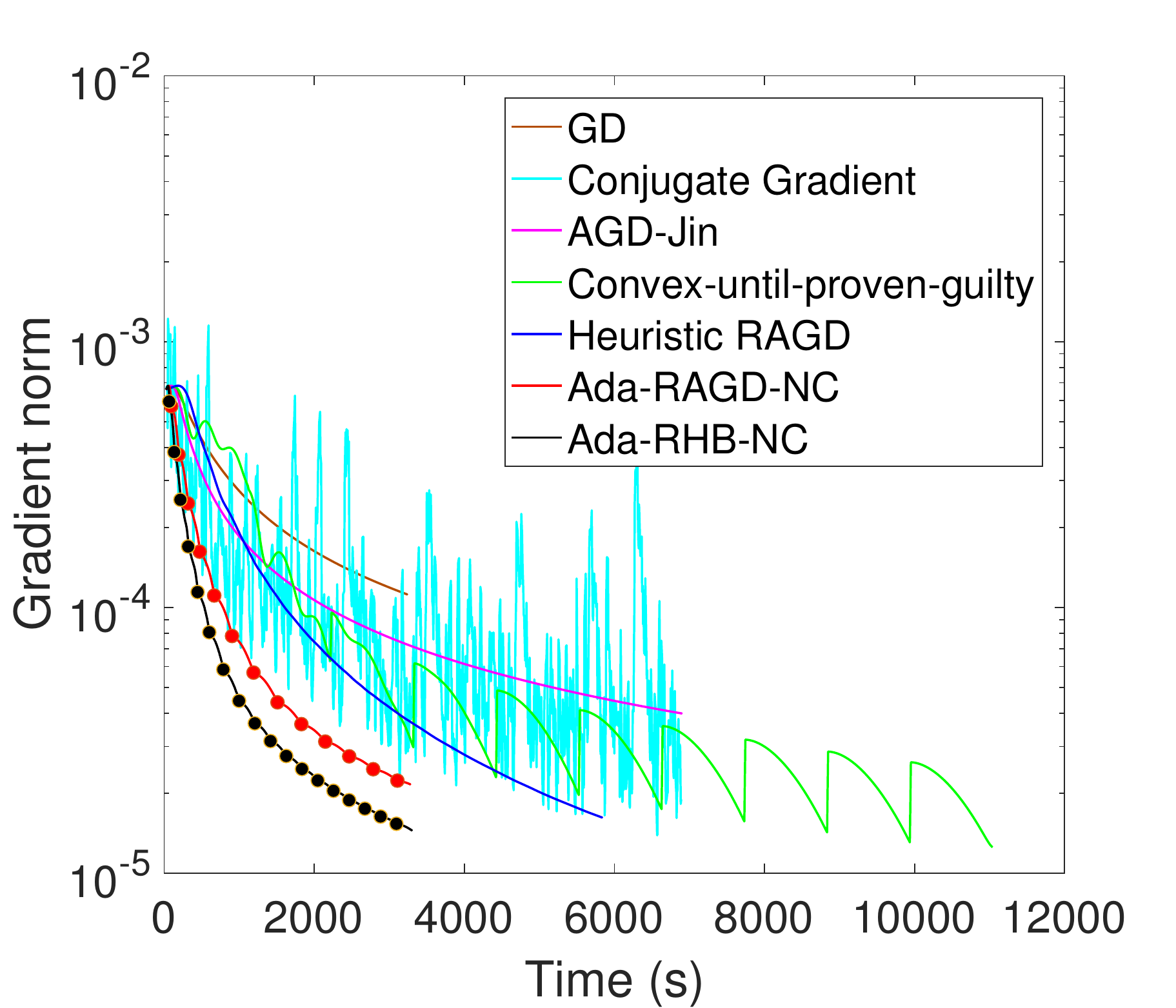}\\
\includegraphics[width=0.33\textwidth,keepaspectratio]{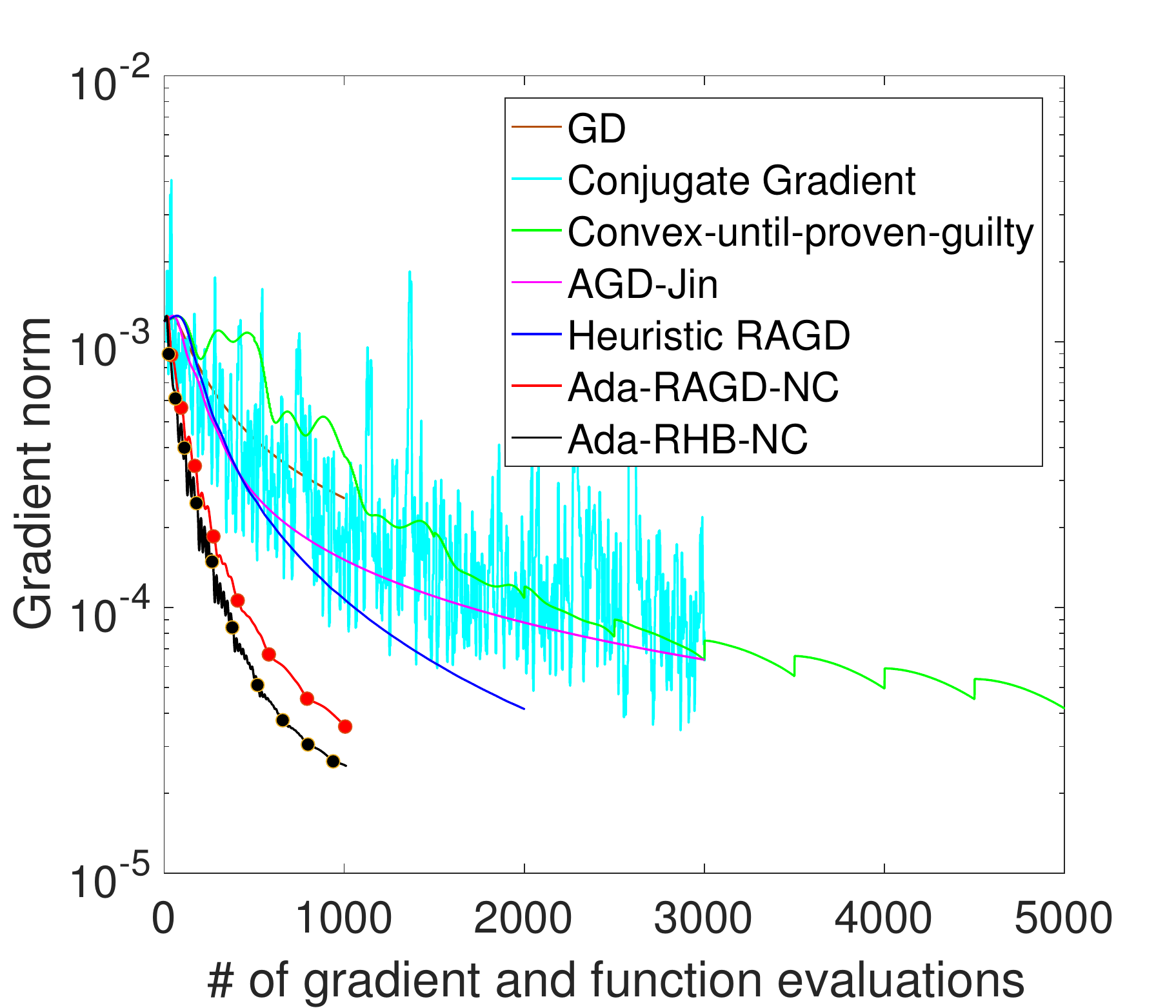}
&\includegraphics[width=0.33\textwidth,keepaspectratio]{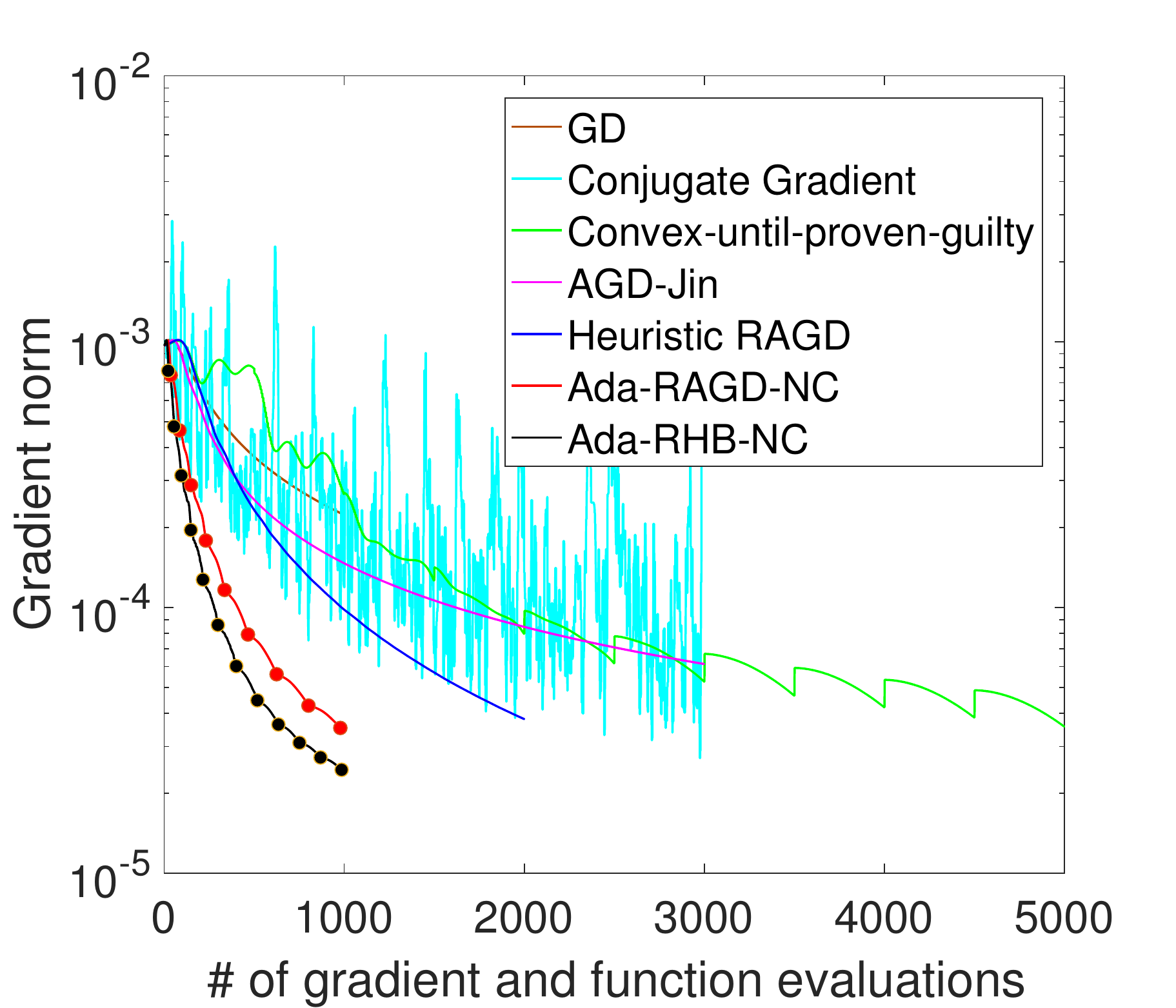}
&\includegraphics[width=0.33\textwidth,keepaspectratio]{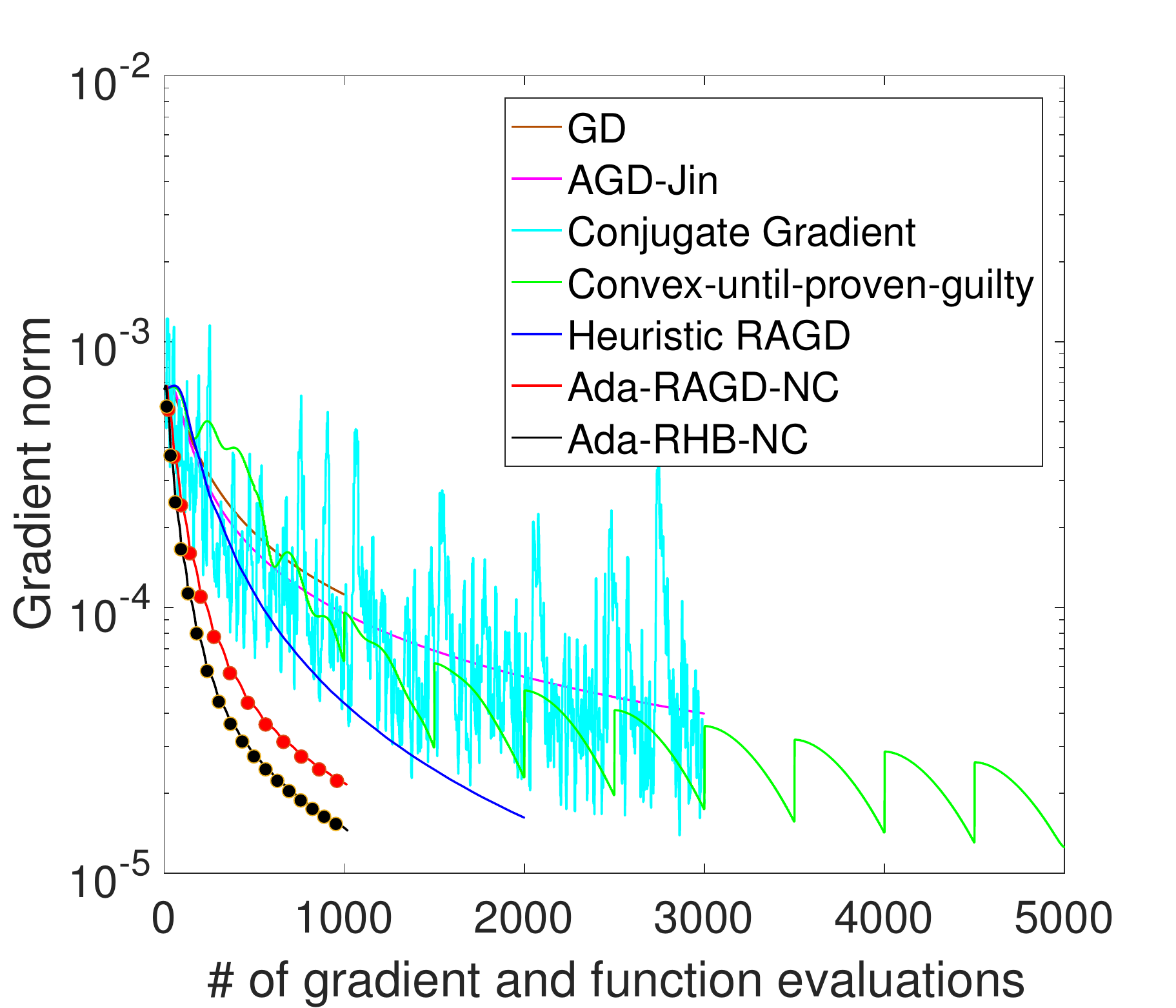}\\
(a) MovieLens-10M & (b) MovieLens-20M & (c) Netflix
\end{tabular}
\caption{Comparisons on the 1 bit matrix completion problem. The first and second row: function value. The third and forth row: gradient norm. The first and third row: use time as the horizontal axis. The second and forth row: use the number of function and gradient evaluations as the horizontal axis. Circles in Ada-RAGD-NC and Ada-RHB-NC indicate where restart occurs.}\label{fig2}
\end{figure}

\subsection{Gap Between Theory and Practice}

In the previous two sections, we only run Ada-RAGD-NC and Ada-RHB-NC for 1000 iterations such that the objective error and gradient norm are reduced to low level quickly, which is sufficient for practical machine learning applications. In this section, we verity what happens when we run the two methods for a longer time and discuss the gap between theory and practice for nonadaptive RAGD-NC and RHB-NC (Algorithms \ref{AGD1} and \ref{HB1}).

We only report the observations on the Movielens-10M data set, and the results are similar on the other two. For both the matrix completion and one bit matrix completion problems, we run Ada-RAGD-NC and Ada-RHB-NC for $10^5$ iterations and use the minimum function value to approximate the optimal one. We set the same parameters as those in Section \ref{exp_sec1}. Figure \ref{fig3} plots the results. We have the following observations and conclusions.

\begin{figure}[p]
\hspace*{-1.3cm}\begin{tabular}{@{\extracolsep{0.001em}}c@{\extracolsep{0.001em}}c@{\extracolsep{0.001em}}c}
\includegraphics[width=1.17\textwidth,keepaspectratio]{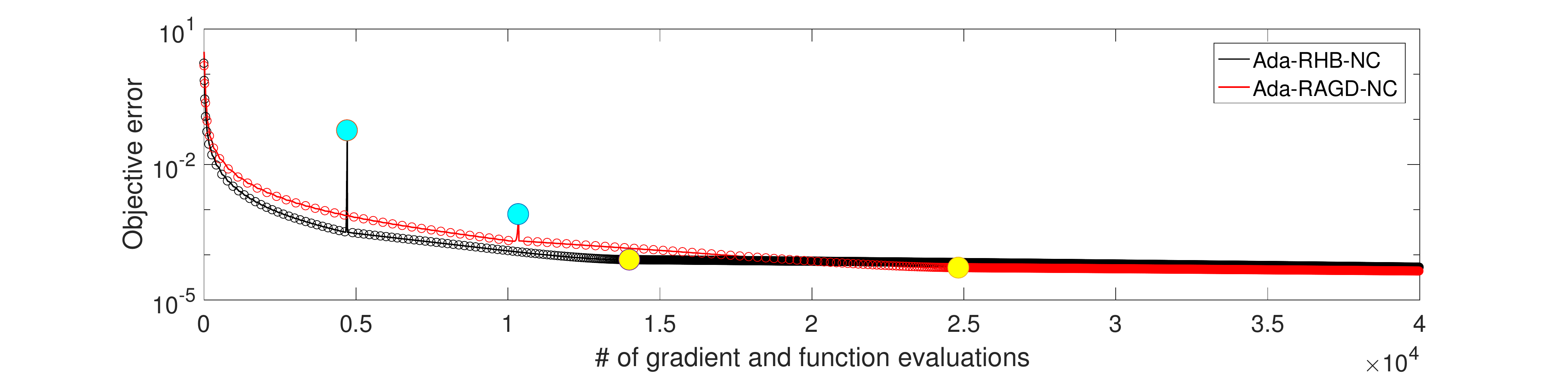}\\
\includegraphics[width=1.17\textwidth,keepaspectratio]{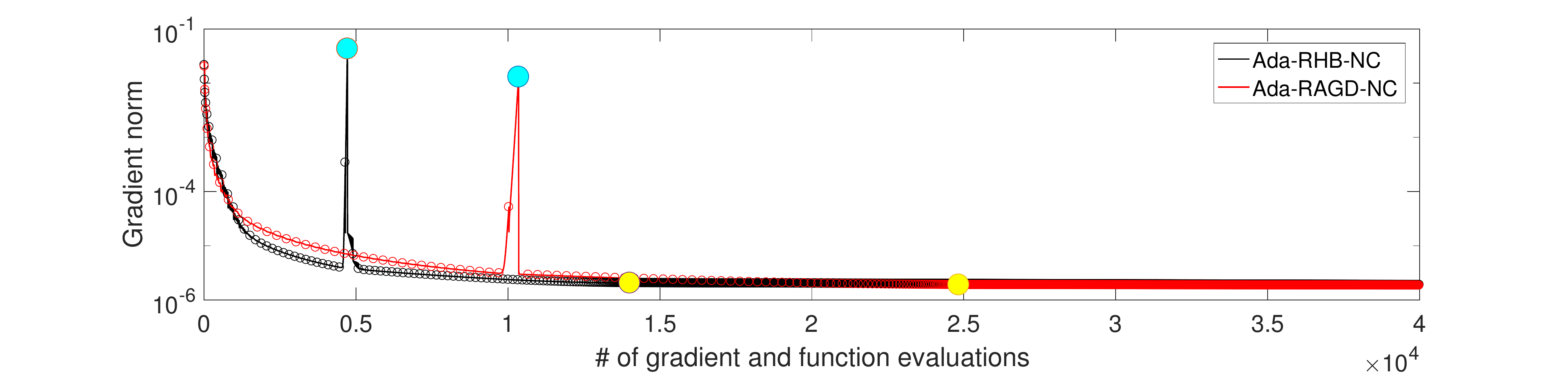}\\
Matrix completion problem\\
\includegraphics[width=1.17\textwidth,keepaspectratio]{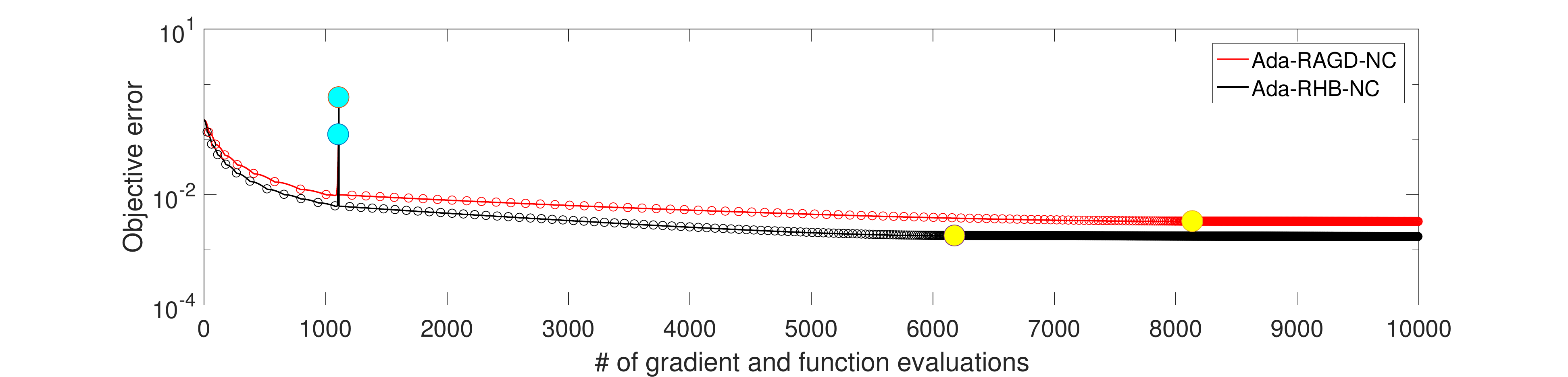}\\
\includegraphics[width=1.17\textwidth,keepaspectratio]{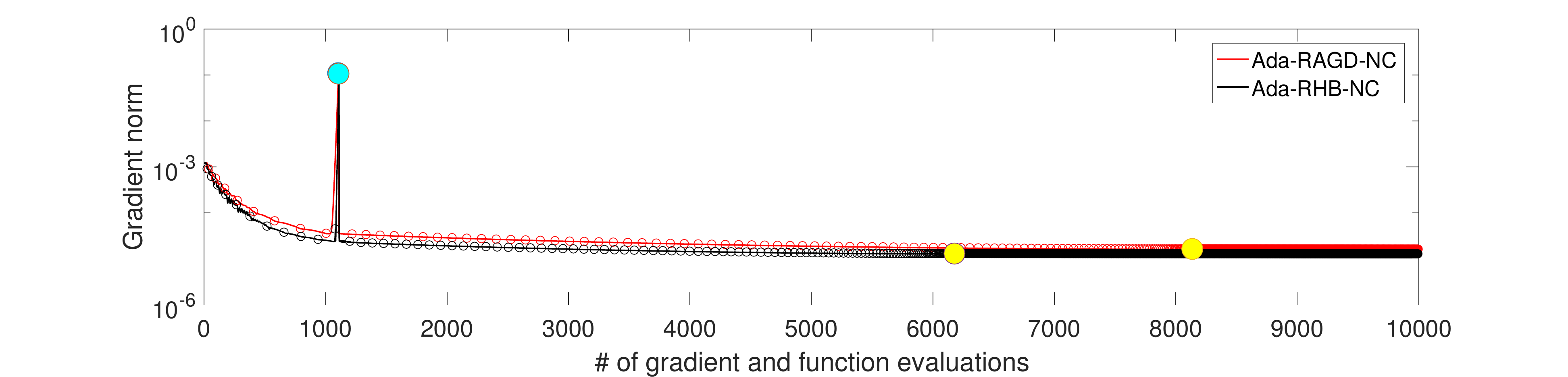}\\
1 bit matrix completion problem
\end{tabular}
\caption{Comparisons of objective error and gradient norm on the Movielens-10M dataset. Top two: matrix completion. Bottom two: 1 bit matrix completion. Small hollow circles indicate where restart occurs. Blue-green circles indicate where line 11 in Algorithms \ref{AGD1p} and \ref{HB1p} is invoked. Yellow circles indicate where $B_0$ decreases to be smaller than $B$ for the first time.}\label{fig3}
\end{figure}

\begin{enumerate}
\item We see that line 11 (in fact, step (\ref{agdp-cont1})) in Algorithms \ref{AGD1p} and \ref{HB1p} is invoked only once for both two methods (marked by the blue-green circle), at which time the objective error and gradient norm increase substantially. Then we decrease $B_0$ and $\eta$ and increase the estimated $\rho$ properly. After the adaptive update, line 11 is never invoked.
\item When $B_0$ decreases to be smaller than $B$ (marked by the yellow circle), we see that both methods restart frequently. Specifically, we observe that for the matrix completion problem, Ada-RAGD-NC restarts every 16 iterations after $B_0\leq B$ while Ada-RHB-NC restarts every 10 iterations. For the one bit matrix completion problem, Ada-RAGD-NC restarts every 4 iterations after $B_0\leq B$ while Ada-RHB-NC restarts every 3 iterations. It seems to take an extremely long time to break the while loop (that is, no restart occurs in $K$ iterations), especially for high dimensional problems ($\|\x^{t+1}-\x^t\|$ is not likely to be small even if $|\x_i^{t+1}-\x_i^t|$ is small for each $i=1,2\cdots,d$). Thus, we suggest to stop the algorithm in practice when the gradient norm is smaller than a threshold or the number of iterations exceeds the maximum one.
\item Note that Ada-RAGD-NC and Ada-RHB-NC reduce to their nonadaptive counterparts (Algorithms \ref{AGD1} and \ref{HB1}) when $B_0\leq B$, and the plots after the yellow circles may illustrate the practical performance of the nonadaptive methods. Thus, nonadaptive RAGD-NC and RHB-NC (Algorithms \ref{AGD1} and \ref{HB1}) are only for the theoretical purpose and we do not suggest to use them in practice due to their frequent restart, unless we do not follow the theory to set the parameters, especially the parameter $B$.
\end{enumerate}

\section{Conclusion}

This paper proposes two simple accelerated gradient methods, restarted AGD and restarted HB, for general nonconvex problems with Lipschitz continuous gradient and Hessian. Our simple methods find an $\epsilon$-approximate first-order stationary point within $\bO(\epsilon^{-7/4})$ gradient evaluations, which improves over the best known complexity by the $\bO(\log\frac{1}{\epsilon})$ factor. Our proofs only use elementary analysis. We hope our analysis may lead to a better understanding of the acceleration mechanism for nonconvex optimization.

\nocite{langley00}

\bibliography{22-0522}
\bibliographystyle{icml2022}

\end{document}